\def\R{\mathbb{R}}
\def\1{\mathbbm{1}}
\def\d{\,\mathrm{d}}
\def \ddt{\frac{\mathrm{d}}{\mathrm{d}t}}
\def \ddx{\frac{\mathrm{d}}{\mathrm{d}x}}
\def\p{\partial}
\def\:{\colon}
\newtheorem{thm}{Theorem}[section]
\newtheorem{cor}[thm]{Corollary}
\newtheorem{lem}[thm]{Lemma}
\newtheorem{hyp}{Hypothesis}[section]
\theoremstyle{definition}
\theoremstyle{remark}
\newtheorem{rem}[thm]{Remark}
\theoremstyle{example}
\title{Spectral gap for the growth-fragmentation equation via Harris's Theorem}
\author{José A. Cañizo\footnote{\textsc{Departamento de
			Matemática Aplicada, Universidad de Granada, 18071 Granada,
			Spain.}
		\textit{E-mail address}: \texttt{\href{mailto:canizo@ugr.es}{canizo@ugr.es}}}
	\and Pierre Gabriel\footnote{\textsc{Laboratoire de Mathématiques de Versailles, UVSQ, CNRS, Université Paris-Saclay, 45 Avenue des \'Etats-Unis, 78035 Versailles Cedex, France.} 
		\textit{E-mail address}: \texttt{\href{mailto:pierre.gabriel@uvsq.fr}{pierre.gabriel@uvsq.fr}}}
	\and Havva Yolda\c{s}\footnote{\textsc{Institut Camille Jordan, Universit\'e Claude Bernard Lyon 1, B\^atiment Jean Braconnier, 21 Avenue Claude Bernard, 69622 Villeurbanne Cedex, France.}
		\textit{E-mail address}: \texttt{\href{mailto:yoldas@math.univ-lyon1.fr}{yoldas@math.univ-lyon1.fr}}}
}
\begin{document}
	
	\maketitle
	
	\begin{abstract}
		We study the long-time behaviour of the growth-fragmentation
		equation, a nonlocal linear evolution equation describing a wide
		range of phenomena in structured population dynamics. We show the
		existence of a spectral gap under conditions that generalise those
		in the literature by using a method based on Harris's theorem, a
		result coming from the study of equilibration of Markov
		processes. The difficulty posed by the non-conservativeness of the
		equation is overcome by performing an $h$-transform, after solving
		the dual Perron eigenvalue problem.  The existence of the direct
		Perron eigenvector is then a consequence of our methods, which prove
		exponential contraction of the evolution equation.
		Moreover the rate of convergence is explicitly quantifiable in terms of
		the dual eigenfunction and the coefficients of the equation.
	\end{abstract}
	
	\tableofcontents
	
	\section{Introduction and main result}
	\label{sec:intro}
	
	The growth-fragmentation equation is a linear, partial
	integro-differential equation which is commonly used in structured population
	dynamics for modelling various phenomena including the time evolution
	of cell populations in biology such as in \cite{APM03, B68, BA67,
		BCP08, DHT84, HW89, DM14, P06, RTK17}, single species populations
	\cite{SS71}, or carbon content in a forest \cite{BGP19}; some
	aggregation and growth phenomena in physics or biophysics as in
	\cite{BLL19,CLODLMP,EPW,G15,LW17,McGZ87}; neuroscience in
	\cite{CY19,PPS14} and even TCP/IP communication protocols such as in
	\cite{BMR02, BCGMZ13,CMP10}. The general form of the growth-fragmentation
	equation is given by:
	\begin{equation}
		\begin{aligned} \label{eq:gf}
			\frac{\p }{\p t}n(t,x) + \frac{\p }{\p x} (g(x) n(t,x)) + B(x)n(t,x) &= \int_{x}^{+ \infty}  \kappa (y,x) n(t,y) \d y,  &&t,x > 0,\\
			n(t,0) &= 0,  &&t \geq 0,\\
			n(0,x) &= n_0(x),   &&x> 0,
		\end{aligned} 
	\end{equation}
	where $n(t,x)$ represents the population density of individuals
	structured by a variable $x > 0$ at a time $t \geq 0$. The structuring
	variable $x$ could be \emph{age, size, length, weight, DNA content,
		biochemical composition} etc. depending on the modelling
	context. Here we refer to it as \emph{`size'} for simplicity. Equation
	\eqref{eq:gf} is coupled with an initial condition $n_0(x)$ at time
	$t=0$ and a Dirichlet boundary condition which models the fact that no
	individuals are newly created at size $0$. The function $g$ is the
	\emph{growth rate} and $B$ is the \emph{total division/fragmentation
		rate} of individuals of size $x \geq 0$. The fragmentation kernel
	$\kappa(y,x)$ is the rate at which individuals of size $x$ are
	obtained as the result of a fragmentation event of an individual of
	size $y$. When fixing $x$, $\kappa(x, \cdot)$ is a nonnegative measure on
	$(0,x]$. The \emph{total fragmentation rate} $B$ is always obtained as
	\begin{equation}\label{eq:kappaB}
		B(x) = \int_0^y \frac{y}{x} \kappa(x,y) \d y,
		\qquad x > 0.
	\end{equation}
	Important particular cases are
	\begin{equation*}
		\kappa(x,y) = B(x) \frac{2}{x} \delta_{\{y=\frac{x}{2}\}},
	\end{equation*}
	which corresponds to the \emph{mitosis} process, suitable for
	modelling of biological cells, where individuals can only break into
	two equal fragments; and
	\begin{equation*}
		\kappa(x,y) = B(x) \frac{2}{x},
	\end{equation*}
	which is the case with \emph{uniform fragment distribution}, where
	fragmentation gives fragments of any size less than the original one
	with equal probability. This case is used for example in modelling the dynamics of polymer chains, as in \cite{EPW}.
	
	Two opposing dynamics, growth and fragmentation, are balanced through
	Equation~\eqref{eq:gf}. The growth term tends to increase the average
	size of the population and the fragmentation term increases the total
	number of individuals but breaks the population into smaller sizes. If
	the growth rate $g(x)$ vanishes, then only fragmentation takes place
	and the equation is known as the \emph{pure fragmentation
		equation}. Similarly when $B$ and $\kappa$ are both $0$, Equation \eqref{eq:gf} is the \emph{pure growth equation}.
	
	We are concerned here with the mathematical theory of this equation,
	and more precisely with its long-time behaviour as $t \to
	+\infty$. Under suitable conditions on the coefficients $\kappa$ and
	$g$, the typical behaviour is that the total population tends to grow
	exponentially at a rate $e^{\lambda t}$, for some $\lambda > 0$, and
	the normalised population distribution tends to approach a
	\emph{universal profile} for large times, independently of the initial
	condition. This has been investigated in a large amount of previous
	works, of which we give a short summary. The first mathematical study
	of this type of equation was done in \cite{DHT84} for the mitosis
	case, in a work inspired by some biophysical papers \cite{B68,
		BA67,SS71}. In \cite{DHT84}, the authors considered the mitosis
	kernel with the size variable in a bounded interval and proved
	exponential growth at a rate $\lambda$, and exponentially fast
	approach to the universal profile. In \cite{MMP05}, the authors
	considered the size variable in $(0,+\infty)$ and introduced the
	\emph{general relative entropy} method for several linear PDEs
	including the growth-fragmentation equation. They proved relaxation to
	equilibrium in $L^p$ spaces without an explicit rate. Following
	\cite{PR05} and \cite{LP09}, providing an explicit rate of convergence
	to a universal profile under reasonable assumptions became a topic of
	research for many other works. New functional inequalities were proved
	in \cite{CCM11, CCM11-2} in order to obtain explicit rates of
	convergence, see also~\cite{GS14}. Some authors used a
	semigroup approach \cite{O92, BA06, BPR12, BG18, EN01, GN88,MS16}
	or a probabilistic approach \cite{BCGMZ13,B03, B19, BW16, BW18, BW20, B18, BGP19,C21,C20,CMP10,C17},
	and some authors provided explicit
	solutions as in~\cite{ZvBW152}. In this paper we are
	able to give more general results regarding the speed of convergence
	to equilibrium: we obtain constructive results which cover a wide
	range of bounded and unbounded fragmentation rates, and which apply
	both in mitosis and uniform fragmentation situations.
	
	When the equal mitosis kernel is considered, there is a special case
	with a linear growth rate where the solutions exhibit oscillatory
	behaviour in long time. This property was first proved mathematically
	in \cite{GN88} when the equation is posed in a compact set. Recently,
	this result was extended to $(0,+\infty)$ by the general relative
	entropy argument in suitable weighted $L^2$ or measure spaces in
	\cite{BDG18,GM19} and by means of Mellin transform in $L^1$ space by~\cite{vBALZ}.
	
	\medskip
	
	An important tool when studying the asymptotic behaviour of
	\eqref{eq:gf} is the Perron eigenvalue problem: finding a positive
	eigenfunction for the operator which defines the equation, associated
	to a simple, real eigenvalue which is also equal to the spectral
	radius; see \cite{M06,DG09} for general existence results. In
	\cite{BCG13}, the authors gave some estimates on the principal
	eigenfunctions of the growth-fragmentation operator, giving their
	first order behaviour close to $0$ and $+\infty$. Then they proved a
	spectral gap result by means of entropy–entropy dissipation
	inequalities, with tools similar to those of~\cite{CCM11,
		CCM11-2}. They assumed that the growth and the fragmentation coefficients
	behave asymptotically like power laws.
	
	\medskip In this paper we use a probabilistic approach, namely
	\emph{Harris's theorem}, for showing the spectral gap
	property. We give a novel approach based on
		estimating solutions to the PDE, and obtain results which
		can be applied to general growth and fragmentation rates
		including mitosis and uniform fragmentation cases. Detailed
		hypotheses and results are given later in this
		introduction. The method is also completely constructive and
		gives explicit estimates. However, in some cases these
		estimates depend on estimates on the first dual
		eigenfunction, which may be not easy to obtain, but
		constitute a separate question. After stating our results we
		also give a brief comparison to other spectral gap results
		in the literature.
	
	Applications of this type of argument into biological and
	kinetic models which can be described as \emph{Markov
		processes} is becoming a subject of many works recently, and has been extended to models which are not
		Markov processes but share similar properties. The
	predecessor of Harris's theorem, namely \emph{Doeblin's
		argument} is used in \cite{G18} for proving exponential
	relaxation of solutions to the equilibrium for the
	conservative renewal equation. In \cite{CY19} and \cite{DG17},
	the authors study population models which describe the
	dynamics of interacting neurons, structured by elapsed-time
	in~\cite{CY19} or by voltage in~\cite{DG17}, and existence of
	a spectral gap property in the \textit{`no-connectivity'}
	setting is proved by Doeblin's Theorem. Moreover, there are
	some recent works for the extension of this method into the
	non-conservative setting. In \cite{BCG17}, the authors
	consider several types of linear PDEs including a
	growth-diffusion model with time-space varying environment and
	some renewal equations with time-fluctuating ({\it e.g.}
	periodic) coefficients. They provide quantitative estimates in
	total variation distance for the associated non-conservative
	and non-homogeneous semigroups by means of generalized
	Doeblin's conditions. The full Harris's theorem is used in
	\cite{B18,BGP19} for deriving exponential convergence to the
	equilibrium in the conservative form of the
	growth-fragmentation equation.  In the present work, we are
	interested in the long time behaviour of the more challenging
	non-conservative case, namely when no quantity is preserved
	along time.  Our method is in the spirit of~\cite{BCGM19},
	where a non-conservative version of Harris's Theorem is
	proposed and applied to the growth-fragmentation equation with
	constant growth rate $g$ and increasing total division rate
	$B$, see also~\cite{CG20} for an application to a
	mutation-selection model which is similar to growth-fragmentation.
	The difference here is that we first build a solution to
	the dual Perron eigenproblem by using Krein-Rutman's theorem
	and a maximum principle.  Then we take advantage of the dual
	eigenfunction to perform a so-called (Doob) $h$-transform
	\cite{D57}, similarly as in~\cite{BPR12,C17}, in order to
	apply Harris's theorem.  It allows us to consider very general
	growth and fragmentation rates.  The drawback is that the
	spectral gap is given explicitly in terms of the dual
	eigenfunction, for which quantitative estimates are in general
	hard to obtain.  However, for certain specific coefficients
	that are worth of interest, the dual eigenfunction is known
	explicitly.  It is the case of the so-called
	\emph{self-similar fragmentation equation}, widely studied in
	the literature, for which we provide new quantitative
	estimates on the spectral gap.
	
	\
	
		Let us now precise the functional analytic
		setting of our work and what we mean by solutions to
		Equation~\eqref{eq:gf}.  We are interested in measure
		solutions to this equation, which is a relevant notion in
		population dynamics, see {\it e.g.}~\cite{CCC13,G18}.  We
		say that a family $(n(t,\cdot))_{t\geq0}$ of positive
		measures on $(0,+\infty)$ is a solution to
		Equation~\eqref{eq:gf} if for all $f\in C^1_c([0,+\infty))$
		the function $t\mapsto\langle n(t,\cdot),f\rangle$ is
		continuously differentiable and for all $t\geq0$
		\begin{equation}\label{eq:gf_def}\frac{\d}{\d t}\langle n(t,\cdot),f\rangle=\langle n(t,\cdot),\mathcal L^*[f]\rangle,\end{equation}
		where
		\[\mathcal{L}^*[f](x) := g(x) \frac{\p}{\p x}f (x) +
		\int_{0}^{x} \kappa (x,y)f(y) \d y -B(x) f(x)\]
		is the dual operator of the growth-fragmentation operator
		\[\mathcal{L} [n] (x) := - \frac{\p}{\p x} (g(x)n(x)) - B(x) n(x) + \int_{x}^{+\infty} \kappa (y,x) n(y) \d y, \]
		which appears in Equation~\eqref{eq:gf}.  We refer
		to~\cite{BCGM19,GM19} for (the method of) proof that
		Equation~\eqref{eq:gf} is well-posed in the set of positive
		(or signed Radon) measures $\mu$ such that the weighted total
		variation norm
		\begin{equation}\label{eq:wtv}\left\| \mu \right\|_{V} = \int_{0}^{+\infty} V(x) |\mu|(\mathrm{d}x)\end{equation}
		is finite, when $V(x)=x^k+x^K$ with $k\leq0$ and $K>1$.
		
		\
		
		The Perron eigenvalue problem consists of finding suitable
		eigenelements $(\lambda, N, \phi )$ with $\lambda > 0$ and
		$N, \phi \: (0,+\infty) \to [0,+\infty)$, $N,\phi\not\equiv0$, satisfying the following:
		\begin{equation} \label{eq:eigenfunction}
			\mathcal{L}[N]=\lambda N,\quad(gN)(0) = 0,
		\end{equation}
		\begin{equation} \label{eq:dualeigenfunction}
			\mathcal{L}^*[\phi]=\lambda\phi. 
	\end{equation}
	If such a triple exists then $\lambda$ is actually the
	dominant eigenvalue of Equation \eqref{eq:gf}, and the solution is expected to converge to a universal profile whose shape is given by the eigenfunction $N(x)$. The
	convergence rate is given by the gap between the dominant eigenvalue
	$\lambda >0$ and the rest of the spectrum. If we scale the
	equation by defining $m(t,x) := n(t,x) e^{-\lambda t}$ we obtain:
	\begin{equation}
		\begin{aligned}
			\label{eq:gfscaled}
			\frac{\p }{\p t}m(t,x) + \frac{\p }{\p x} (g(x) m(t,x)) + (B(x) + \lambda) m(t,x)&= \int_{x}^{+ \infty}  \kappa (y,x) m(t,y) \d y,  &&t,x \geq 0,\\
			m(t,0) &= 0, \qquad \qquad &&t > 0,\\
			m(0,x) &=  n_0(x), \qquad  &&x > 0.
		\end{aligned}
	\end{equation}
	We remark that $N(x)$ is the stationary solution of Equation \eqref{eq:gfscaled}
	and $\phi(x)$ provides a conservation law for \eqref{eq:gfscaled} since
		\[\frac{\d}{\d t}\int_{0}^{+\infty}\phi(x) m(t,x) \d x =0. \]
	Since the existence and
	uniqueness of the eigenelements provide useful information about the long
	time behaviour of the growth-fragmentation equation \eqref{eq:gf}, it
	has been a popular topic of research. We refer to \cite{DG09}
	for a general recent result. From now on we consider
	Equation \eqref{eq:gfscaled} instead of Equation \eqref{eq:gf} since it is more
	convenient to study the long-time behaviour of the former and we
	can easily recover the nature of the latter.
	
	\
	
	We now list all the assumptions we need throughout the paper. 
	
	\
	
	As we will explain in Section~\ref{sec:harris},
		Harris's method relies on a local Doeblin's minorisation
		condition.  The computations for checking this condition
		strongly depend on the fragmentation kernel.  In~\cite{CY19}
		a global Doeblin condition is proved (for a similar
		equation) for kernels $\kappa$ which satisfy, for some
		$\epsilon,\eta,x_*>0$, the condition that
		$\kappa(x,y)\geq\epsilon$ for all $x\in[0,\eta]$ and
		$y\geq x_*$.  Here we rather consider kernels that are of
		self-similar form, which is commonly assumed in the
		literature about spectral gaps for the growth-fragmentation
		equation~\cite{BCG13,BCGM19,BG18,C21,CCM11,MS16} and
		includes the classical kernels appearing in applications (in
		particular equal or unequal mitosis and uniform fragment
		distribution, see below).  
	
	\begin{hyp}\label{asmp:k1}
		We assume that $\kappa(x,y)$, the fragmentation kernel, is of the
		self-similar form such that 
		\begin{equation*}  \label{hyp:kappa}
			\kappa (x,y) = \frac{1}{x}p\Big(\frac{y}{x} \Big) B(x), \quad \text{ for } y > x > 0,
		\end{equation*}
		where $p$, the ``fragment distribution'', is a nonnegative measure on $(0,1]$ such that $z p(z)$ is a probability measure; that is,
		\begin{equation*}
			\int_{(0,1]} z p(z) \d z = 1.
		\end{equation*}
	\end{hyp}
	
	\begin{rem}
		It is useful to define $p_k$, for $k\in\R$, as the $k$-th moment of
		$p$:
		\[p_k := \int_{0}^{1} z^k p(z) \d z.\] With this notation,
		Hypothesis~\ref{asmp:k1} ensures that $p_1 = 1$, so the
		relation~\eqref{eq:kappaB} is guaranteed.
	\end{rem}
	
	\medskip Our next hypothesis states that we consider only the two
	extreme cases of the fragment distribution, namely the very singular
	equal mitosis case and the very smooth uniform fragment
	distribution. One can find conditions for our methods to work in
	intermediate cases, but we have preferred to give simple proofs that
	show both singular and smooth cases can be treated:
	
	\begin{hyp} \label{asmp:p}
		We assume that the fragment distribution $p$ is either the one
		corresponding to the equal mitosis:
		\begin{equation}\label{eq:mitosis}
			p(\d z)=2\delta_{\frac12}(\d z)
		\end{equation}
		or the uniform fragment distribution:
		\begin{equation}\label{eq:uniform}
			p(\d z)=2\d z.
		\end{equation}
	\end{hyp}
	
	\begin{rem}\label{rk:kernel}
		We restrict to these two particular fragmentation kernels
		because they naturally appear in the modelling of natural
		phenomena. They are also good representatives of two
		opposite mathematical situations: a very regular, strictly
		positive case and a singular case which is positive only at
		$z=1/2$. However, the results which we prove to be valid for
		the uniform kernel can be readily extended to self-similar
		kernels with $p$ satisfying
		\begin{equation}\label{asmp:plowerbound}
			p(z)\geq c>0 \qquad\text{for all
					$z$ in some interval $(z_1, z_2) \subseteq
					(0,1)$}
		\end{equation}
		and either
		\[p_0<+\infty\qquad \text{if}\qquad\int_0^1\frac{1}{g(x)}\d x<+\infty,\]
		or
		\[\exists k<0 \ \text{ with }\  p_k< +\infty\qquad \text{if}\qquad\int_0^1\frac{1}{g(x)}\d x=+\infty.\]
		In the particular case of the linear growth rate, $g(x) = x$, it is enough to assume that
		\[\exists k<1 \ \text{ with }\ p_k< +\infty.\]
		Notice that under condition~\eqref{asmp:plowerbound}, similarly as
		for~\eqref{eq:mitosis} and~\eqref{eq:uniform}, the function
		$k\mapsto p_k$ is strictly decreasing on the interval where it takes
		finite values. (The only case in which $p_k$ is not strictly
		decreasing is that of $p(z)$ concentrated at $z=1$, which actually
		means no fragmentation at all is happening.)
		
			In the case of constant growth rate, a more
			general condition than~\eqref{asmp:plowerbound} is assumed
			in~\cite{BCGM19} that covers the unequal mitosis kernels
			$p(\d z)=\delta_\alpha(\d z)+\delta_{1-\alpha}(\d z)$ with
			$0<\alpha<1$. In our proofs we can also consider this
			generalisation with straightforward modifications when the
			growth rate $g$ satisfies forthcoming
			Hypothesis~\ref{asmp:gp}.
			
			Regarding non self-similar kernels, there are results of
			exponential convergence to the stationary distribution in
			the literature, but only for bounded fragmentation rates;
			see~\cite{LP09,PPS14} for PDE-based arguments and
			\cite{B19,BW18, C21} for a probabilistic point of view. We
			also point out that an optimal condition on the fragment
			distribution is given in \cite{B19} for a spectral gap to
			exist (for bounded fragmentation rates).
	\end{rem}

	Next we have a general assumption on the growth rate $g$ and the total
	fragmentation rate~$B$:
	
	\begin{hyp}\label{asmp:gB}
		We assume that $g \: (0,+\infty) \to (0,+\infty)$ is a locally
		Lipschitz function such that $g(x) =\mathcal{O}(x)$ as
		$x\to+\infty$ and $g(x)=\mathcal{O}(x^{-\xi})$ as $x\to0$ for some
		$\xi\geq 0$.  The total fragmentation rate
		$B\: [0,+\infty) \to [0,+\infty)$ is a continuous function and the
		following holds
		\begin{equation}
			\label{eq:gB}
			\int_0^1\frac{B(x)}{g(x)}\d x<+\infty,
			\qquad\frac{xB(x)}{g(x)}\underset{x \to 0}{\longrightarrow } 0,
			\qquad \frac{xB(x)}{g(x)} \underset{x \to +\infty}{\longrightarrow } + \infty.
		\end{equation}
	\end{hyp}
	
	This assumption is very mild, and is always present in the previous works
	to ensure the existence of an equilibrium and a dual eigenfunction. If
	$B$ behaves like a power of exponent $b$ and $g$ behaves like a
	power of exponent $a$, conditions \eqref{eq:gB} are equivalent to
	the more familiar $b - a + 1 > 0$. The condition
	$g = \mathcal{O}(x)$ for large $x$ ensures that the characteristics
	corresponding to the growth part are defined for all times (i.e.,
	clusters do not grow to infinite size in finite time). A stronger
	assumption which is implicit in Hypothesis \ref{asmp:gB} is that $B$ is bounded
	above on intervals of the form $[0,R]$ (since it is continuous there),
	so we do not allow fragmentation rates $B$ which blow up at $0$. This
	is used in the proof of Lemma \ref{lem:doeblinuniformfrag}.
	
	\smallskip
	A consequence of Hypothesis \ref{asmp:gB}, later we will need 
	the following:
	\begin{equation}
		\label{eq:tB}
		\parbox{.8\linewidth}{There exists $t_B > 0$ such that $B$ is bounded below by a
			positive quantity on any interval of the form $[t_B,\theta]$ with
			$\theta > t_B$.}
	\end{equation}
	One sees this from the last limit in \eqref{eq:gB}, which implies that
	for large enough $t_B$ we have
	\begin{equation*}
		B(x) \geq \frac{g(x)}{x}.
	\end{equation*}
	This easily implies \eqref{eq:tB}, since $g(x)/x$ is continuous and
	strictly positive, so bounded below by some positive quantity on any
	compact interval.
	
	\
	
	Our last assumption gives a stronger requirement on the growth rate
	$g$ when the mitosis kernel is considered. In this case, some additional requirement
	is necessary, since when the linear growth rate with equal
	mitosis is considered, it is known that there is no spectral gap
	\citep{BDG18,GM19,vBALZ}. We point out that the sharp assumption of ``there
	exists a point $x > 0$ with $g(2x)\neq 2g(x)$'' is enough to show
	convergence to the profile $N$, without a rate and only in particular cases,
	as proved in \cite[Section 6.3.3]{RTK17}. Our assumption is stronger
	than this, but also leads to a stronger result:
	
	\begin{hyp}\label{asmp:gp}
		When $p$ is the equal mitosis kernel~\eqref{eq:mitosis}, we assume
		that the growth rate $g$ satisfies
		\begin{gather*}
			\omega g(x) < g(\omega x)
			\qquad \text{for all $x > 0$ and $\omega \in (0,1)$,}
			\\
			H(z) := \int_0^z \frac{1}{g(x)} \d x < +\infty
			\qquad \text{for all $z > 0$,}
		\end{gather*}
		and also $H^{-1}$ (the inverse of $H$) does not grow too fast, in
		the sense that for all $r > 0$ we have
		\begin{equation}
			\label{eq:H-1-power}
			\lim_{z \to +\infty} \frac{H^{-1}(z + r)}{H^{-1}(z)} = 1.
		\end{equation}
	\end{hyp}
	
	If we consider just powers, examples of growth and fragmentation
	rates which satisfy all of the above are
	\begin{gather*}
		B(x) = x^b,\qquad  g(x) = x^a
	\end{gather*}
	with:
	\begin{itemize}
		\item any $b \geq 0$, $-\infty < a \leq 1$ in the uniform
		fragment distribution case, excluding the case
		$(b, a) = (0, 1)$.
		\item any $b \geq 0$, $-\infty < a < 1$ in the mitosis case.
	\end{itemize}

	\medskip
	
	Under Hypothesis \ref{asmp:k1}, the rescaled
	growth-fragmentation equation \eqref{eq:gfscaled} takes the
	form:
	\begin{align} \label{eq:gfscaledgeneral}
		\begin{split}
			\frac{\p }{\p t}m(t,x) +  \frac{\p }{\p x} (g(x)m(t,x)) + c(x ) m(t,x) &= \mathcal{A}(t,x) , \hspace{5pt} t,x \geq 0,\\
			m(t,0)&= 0, \qquad \qquad t > 0,\\
			m(0,x) &= n_0(x), \qquad  x > 0.
		\end{split}
	\end{align} where 
	\[c(x) := B(x) + \lambda \] and
	\[\mathcal{A}(t,x) :=  \int_{x}^{+\infty}  \frac{B(y)}{y} p
	\left(\frac{x}{y}\right) m(t,y) \d y.
	\]
	According to Hypothesis \ref{asmp:p}, we only allow $p(z) = 2$ or
	$p(z) = 2 \delta_{\frac 1 2} (z)$.
	
	\
	
	Our main result is given by the following theorem:
	\begin{thm}
		\label{thm:main}
		Assume that Hypotheses \ref{asmp:k1}, \ref{asmp:p},
		\ref{asmp:gB}, and \ref{asmp:gp} are satisifed. Then there
		exists a solution $(\lambda,N,\phi)$ to the Perron
		eigenvalue
		problem~\eqref{eq:eigenfunction}-\eqref{eq:dualeigenfunction}
		with the normalization $\int N=\int\phi N=1$, $\lambda >0$,
		and there exist $C,\rho > 0$ such that the solution
		$n = n(t,x) \equiv n_t(x)$ to Equation \eqref{eq:gf} with initial
		data given by a nonnegative finite measure $n_0$ with
		$\|n_0\|_V < +\infty$ satisfies
		\begin{equation}\label{eq:convergence}
			\left\| e^{-\lambda t} n_t - \Big(\int\phi n_0\Big)N \right\|_{V}
			\leq
			C e^{-\rho t} \left\| n_0 - \Big(\int\phi n_0\Big)N\right\|_{V}
			\qquad \text{for all } t \geq 0,
		\end{equation}
		where the weight $V$ of the total variation norm $\|\cdot\|_V$ defined in~\eqref{eq:wtv} is given by
		\begin{equation*}
			\begin{aligned}
				V(x)&=1+x^K,\ 1+\xi <K\qquad
				&&\text{if}\qquad\int_0^1\frac{1}{g(x)}\d x<+\infty,
				\\
				V(x)&=x^k+x^K,\ -1<k<0, 1+\xi <K\qquad
				&&\text{if}\qquad\int_0^1\frac{1}{g(x)}\d x=+\infty.
			\end{aligned}
		\end{equation*}
		In the specific case of $g(x)=x$, the weight $V(x)$ can be taken to be
		\begin{equation*}
			V(x) = x^k+x^K,\ -1<k<1<K.
		\end{equation*}
	\end{thm}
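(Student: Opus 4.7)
The plan is to follow the non-conservative Harris strategy: first solve the dual eigenvalue problem~\eqref{eq:dualeigenfunction} for a positive $\phi$, then use $\phi$ to perform a Doob $h$-transform that converts~\eqref{eq:gfscaled} into a \emph{conservative} (mass-preserving) linear evolution, and finally apply the classical Harris theorem to the resulting Markov semigroup. The exponential decay in the $V$-weighted total variation norm will transfer back through the inverse $h$-transform to produce~\eqref{eq:convergence}, and the direct eigenfunction $N$ is recovered as the (rescaled) invariant probability measure of the transformed semigroup.

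For the construction of $(\lambda,\phi)$ I would apply Krein--Rutman to a compact truncation/regularization of $\mathcal{L}^*$ on a bounded interval and then pass to the limit, using a maximum principle to ensure strict positivity of $\phi$ and positivity of $\lambda$. The key technical by-product of this step is a pair of explicit two-sided pointwise bounds on $\phi$ in terms of $g$, $B$, and the quantity $H(z) = \int_0^z 1/g(x)\d x$ appearing in Hypothesis~\ref{asmp:gp}. These bounds are what allow the Lyapunov weight $W := V/\phi$ on the $h$-transformed side to match exactly the weight $V$ appearing in Theorem~\ref{thm:main}.

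Setting $\widetilde m := \phi\, m$, equation~\eqref{eq:gfscaled} becomes a conservative equation with modified fragmentation kernel $\kappa^h(y,x) = \kappa(y,x)\,\phi(x)/\phi(y)$, whose generator I denote $\mathcal{L}^h$. I then verify the two ingredients of Harris's theorem for the corresponding semigroup $S^h_t$: a Foster--Lyapunov drift inequality $\mathcal{L}^h W \le -\alpha W + K\,\mathbbm{1}_{[a,b]}$, which follows from the moment estimates enforced by Hypothesis~\ref{asmp:gB} combined with the $\phi$-bounds from the previous step; and a Doeblin-type minorization $S^h_{t_0}\delta_x \ge \nu$ for some $t_0>0$, some nontrivial measure $\nu$, and every $x$ in the compact set $[a,b]$ provided by the Lyapunov step.

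The Doeblin minorization is where the main difficulty lies and must be handled separately in the two cases of Hypothesis~\ref{asmp:p}. In the uniform case $p\equiv 2$ the fragment density is strictly positive, so a single fragmentation event combined with transport along the growth flow already produces a positive density on an interval uniformly in $x\in[a,b]$; this is the role of property~\eqref{eq:tB} and of the boundedness of $B$ on $[0,R]$. In the mitosis case the kernel is singular and, after $n$ fragmentations, the support of $S^h_t\delta_x$ is concentrated on a countable set of points of the form $2^{-n}\Phi_t(x)$, where $\Phi_t$ denotes the flow of $g$. Here the strict concavity condition $\omega g(x) < g(\omega x)$ from Hypothesis~\ref{asmp:gp} forces these points along different fragmentation trees to spread apart at genuinely different rates, while the growth control~\eqref{eq:H-1-power} on $H^{-1}$ guarantees that the flow remains in a macroscopic range long enough for these trees to overlap and generate an absolutely continuous minorizing measure after finitely many fragmentations. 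Once both the Lyapunov and the Doeblin conditions are established, Harris's theorem yields an invariant probability $\widetilde N$ for $S^h_t$ together with exponential convergence in $\|\cdot\|_W$ at an explicit rate $\rho>0$; setting $N := \widetilde N/\phi$ recovers the direct Perron eigenfunction, and unwinding the $h$-transform translates the $\|\cdot\|_W$ contraction for $\widetilde m_t$ into the $\|\cdot\|_V$ estimate~\eqref{eq:convergence} for $e^{-\lambda t}n_t$.
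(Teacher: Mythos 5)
Your proposal matches the paper's strategy point for point: solve the dual eigenproblem on a truncation via Krein--Rutman and a maximum principle, perform a Doob $h$-transform with $\phi$ to pass to a conservative semigroup, verify Foster--Lyapunov and Doeblin conditions, invoke Harris's theorem, and read off $N$ from the invariant measure. One place where your picture diverges from what the paper actually does is the mitosis minorization. You describe the mass at time $t$ as sitting on a countable set $\{2^{-n}\Phi_t(x)\}$ and the hypothesis $\omega g(x)<g(\omega x)$ as making ``different fragmentation trees'' spread apart and overlap. In fact, even with a single fragmentation the support is already a continuum: the location $F(\tau)=X_{t-\tau}\bigl(\tfrac12 X_\tau(x_0)\bigr)$ depends on the fragmentation time $\tau$, and the paper's Lemma~\ref{lem:flow-sublinear} (strict flow sublinearity, a direct consequence of $\omega g(x)<g(\omega x)$) makes $\tau\mapsto F(\tau)$ strictly monotone, so that Duhamel plus the elementary Lemma~\ref{lem:time-integral-of-dirac} (time-integral of a moving Dirac) turns the first-fragmentation term into an absolutely continuous lower bound on an interval. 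No interaction between several fragmentation generations is needed; condition~\eqref{eq:H-1-power} then only serves to make the intersection of these intervals over $x_0\in(0,\theta]$ nonempty for large $t$. A second, smaller point: the existence argument for $\phi$ does not produce (nor require) two-sided pointwise bounds; the paper only proves an upper bound $\phi(x)\le C(1+x^k)$, continuity, strict positivity, and the dichotomy $\phi(0)>0$ vs.\ $\phi(0)=0$ according to the integrability of $1/g$ near the origin, and this information is already enough to make the weight $V/\phi$ equivalent to the Lyapunov weight $1+x^K/\phi(x)$ (resp.\ $(x^k+x^K)/\phi(x)$) that Harris's theorem delivers.
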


	It is worth noticing that we obtain a spectral gap in spaces
	with essentially optimal weights.  Indeed it was proved
	in~\cite{BG17} that there is no spectral gap in weighted $L^1$ space
	with the dual eigenfunction $\phi$ when $B$ is bounded (see
	the estimates in Theorem~\ref{thm:dualeigenfunction} below).
	
	\medskip
	
	To the best of our knowledge, even the existence of the Perron eigenelements in
	such generality is new (allowing a total fragmentation rate with any
	growth at infinity, and with no required connectivity condition on its
	support), and hence so is the existence of a spectral gap. However,
	since our approach for the existence of the principal eigenfunction
	$N$ is a byproduct of the contraction result provided by Harris's
	theorem, this precludes the case of self-similar fragmentation with
	equal mitosis and growth rate $g(x)=x$, for which convergence to a
	universal profile does not hold, as we already mentioned. In that case
	the existence of a Perron eigenfunction has to be tackled with other
	spectral methods, as in~\cite{DG09,HW90,H85,M06}.
	
	\medskip
	
	Note also that our result is valid for the measure solutions
	of Equation~\eqref{eq:gf}, thus improving the result
	in~\cite{DDGW18} where the general relative entropy method is
	extended to measure solutions, providing convergence to
	Malthusian behaviour but without a rate and under restrictive
	assumptions on the coefficients.
	
	\medskip
	
	Regarding the assumptions on the coefficients,
		the only existing spectral gap results that consider general growth rates are the ones in~\cite{BCG13} and~\cite{BG18}.
		In theses papers, the fragmentation rate is assumed to behave like a power law,
		which we relax here by only requiring Hypothesis~\ref{asmp:gB} on $B$.
		The other results in the literature focus on constant or linear growth rates and,
		except in~\cite{BCGM19}, they also consider division rates that grow like power laws.

	\medskip
	
	Finally, when explicit estimates are available for
		$\phi$, our method allows us to derive quantitative estimates
	on the spectral gap.  It is the case for instance when $g(x)=x$
	since then $\phi(x)=x$.  An important particular case is to
	consider additionally that $B(x)=x^b$ for some $b>0$.  This
	corresponds to the so-called self-similar fragmentation
	equation, which appears as a rescaling of the pure fragmentation
	equation, see {\it e.g.}~\cite{DE16,EMRR05}.  To illustrate the
	quantification of the spectral gap, we prove that for the
	homogeneous fragmentation kernel and the choice $V(x)=1+x^2$,
	the inequality~\eqref{eq:convergence} holds true for
	\begin{equation}\label{rho_num}
		\rho=\frac{-\log\Big(1-\dfrac{\alpha}{2(1+2\alpha)}\Big)}{2\log2}
	\end{equation}
	where
	\[\alpha=2\log2\,R^{b+3}e^{-2(4R)^b/b}\quad \text{with}\quad R=80\Big(\frac{15}{2}\Big)^{\frac1b+\frac b2}.\]
	This seems to simplify the computable bound in~\cite[Proposition
	6.7]{MS16}.  It can also be compared to~\cite{GS14} where the
	spectral gap in $L^2(x\d x)$ is proved to be at least $\frac12$,
	but only for $b\geq2$. Similarly as
		in~\cite{MS16}, our method also allows for deriving explicit
		estimates for more general fragmentation kernels since it does
		not change the function $\phi$.
		
		Historically, the first explicit spectral gap was obtained for
		constant growth and division rates and the equal mitosis
		kernel in~\cite{PR05}, and then in~\cite{CMP10,BCGMZ13}.  The
		conditions were relaxed~\cite{LP09} and in particular general
		fragmentation kernels were considered.  Our method also allows
		to get explicit spectral gap in the case of constant growth
		rates, when the division rate is affine and the fragmentation
		kernel is self-similar.  Indeed if $g(x)=1$ and $B(x)=ax+b$,
		then we easily check that $\phi(x)=\alpha x+1$ with
		$\alpha=\frac{(p_0-1)b}{2}\big[\sqrt{1+\frac{4a}{(p_0-1)b^2}}-1\big]$,
		where we recall that $p_0$ is the mass of the self-similar
		kernel $p$, and the Perron eigenvalue is given by
		$\lambda=\frac{(p_0-1)b}{2}\big[\sqrt{1+\frac{4a}{(p_0-1)b^2}}+1\big]$.
		It is a particular case of the one treated in~\cite{BCGM19}
		where $B$ is only assumed to be non-increasing, but it extends
		the historical case of constant division rate.
	
	\
	
	This paper is organized as follows: 
	We devote Section \ref{sec:phi} to showing existence of the dual
	eigenfuction and some bounds on it. In Section \ref{sec:harris}, we
	recall some introductory concepts from the theory of Markov processes
	and state Harris's Theorem \ref{thm:Harris} based on
	the previous literature. Eventually for the proof of Theorem
	\ref{thm:main} which is given by applying Harris's theorem, we need to
	have Hypotheses \ref{hyp:Lyapunov} and \ref{hyp:localDoeblin}
	satisfied for Equation \eqref{eq:gfscaledgeneral}. In Sections
	\ref{sec:lyapunov} and \ref{sec:LowerBounds}, we prove that Hypotheses
	\ref{hyp:Lyapunov} and \ref{hyp:localDoeblin} are verified for Equation \eqref{eq:gfscaledgeneral}, respectively.
	Finally in Section~\ref{sec:proof-main} we give the proof of Theorem~\ref{thm:main} and the computations leading to~\eqref{rho_num}.
	
	\section{Existence of the dual eigenfunction}
	\label{sec:phi}
	
	In this section, we prove the following theorem which implies
	existence and boundedness of the dual Perron eigenfunction $\phi$, a
	solution to the dual eigenproblem \eqref{eq:dualeigenfunction}:
	
	\begin{thm}[Existence and bounds on the eigenfunction $\phi$]
		\label{thm:dualeigenfunction}
		We assume that Hypotheses \ref{asmp:k1} and \ref{asmp:gB} hold true
		and assume also that $p_0<+\infty$. Then there exist a continuous
		function $\phi$ which is a solution to Equation \eqref{eq:dualeigenfunction}
		and $C>0$ such that for any $k>1$;
		\[ 0 < \phi(x) \leq C(1+ x^k) \qquad \text{ for all } x>0. \]
		Additionally we have $\phi(0)>0$ when $\int_0^1\frac1g<+\infty$ and
		$\phi(0)=0$ when $\int_0^1\frac1g=+\infty$.
	\end{thm}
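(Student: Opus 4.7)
The plan is to construct the eigenpair $(\lambda,\phi)$ first on bounded intervals via the Krein--Rutman theorem, establish uniform estimates by a super-solution argument, and then pass to the limit.

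For each $R > 1$, I would consider the operator $\phi \mapsto g\phi' + \int_0^x \kappa(x,y)\phi(y)\d y - B\phi$ acting on $C([0,R])$. Using that the transport part can be inverted along characteristics emanating from $x = 0$, and that the integral perturbation is compact (since $p_0 < +\infty$ and $B/g$ is integrable near zero by Hypothesis~\ref{asmp:gB}), the resolvent of this operator for large shift becomes compact and strongly positive on the cone of nonnegative continuous functions. Krein--Rutman's theorem then yields a principal eigenvalue $\lambda_R > 0$ with a strictly positive eigenfunction $\phi_R$, which I normalize by $\phi_R(1) = 1$.

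For the upper bound, fix $k > 1$ and test the candidate super-solution $\psi_k(x) := x^k$ against the dual operator:
\[
-g(x)\psi_k'(x) + (B(x)+\lambda)\psi_k(x) - B(x)\int_0^1 p(z)\psi_k(xz)\d z = x^k\bigl[\lambda + B(x)(1-p_k)\bigr] - g(x)k x^{k-1}.
\]
Since $k \mapsto p_k$ is strictly decreasing with $p_1 = 1$ (Remark~\ref{rk:kernel}), one has $1 - p_k > 0$ for $k > 1$. The coercivity $xB(x)/g(x) \to +\infty$ from Hypothesis~\ref{asmp:gB} then makes the right-hand side nonnegative for $x$ outside a compact set $[\delta, M]$, so $\psi_k$ is a super-solution for large $x$. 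On $[0,M]$, locally uniform $C^1$ estimates from the equation together with the normalization $\phi_R(1)=1$ give a uniform $L^\infty$ bound. Combining via a comparison principle in the integrated Volterra formulation yields $\phi_R(x) \leq C(1 + x^k)$ with $C$ independent of $R$.

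Arzel\`a--Ascoli then extracts locally uniformly convergent subsequences $\phi_R \to \phi$ and $\lambda_R \to \lambda$, with the limit solving \eqref{eq:dualeigenfunction} on $(0,+\infty)$ with the required pointwise bound. For the boundary behavior at $0$, I would use the integrating factor $e^{-M(x)}$ with $M(x) := \int_1^x \frac{B(s)+\lambda}{g(s)}\d s$ to recast the equation as
\[
\phi(x) e^{-M(x)} - \phi(\epsilon) e^{-M(\epsilon)} = -\int_\epsilon^x e^{-M(s)} \frac{B(s)}{g(s)} \int_0^1 p(z)\phi(sz)\d z\,\d s,
\]
and let $\epsilon \to 0$: when $\int_0^1 1/g < +\infty$, $M$ has a finite limit at $0$ and $\phi(\epsilon) e^{-M(\epsilon)}$ converges to a positive value, forcing $\phi(0) > 0$; when $\int_0^1 1/g = +\infty$, $e^{-M(\epsilon)} \to +\infty$ forces $\phi(\epsilon) \to 0$. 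The main obstacle is the super-solution analysis: balancing the transport term $g(x)k x^{k-1}$ (possibly singular at $0$ of order $x^{-\xi}$, and of order $x^{k-1}$ at infinity) against the positive term $B(x)x^k(1-p_k)$ requires precisely the coercivity $xB/g \to \infty$ at infinity, while the uniformity in $R$ requires the Volterra comparison to absorb the compact-interval bound without introducing $R$-dependent constants.
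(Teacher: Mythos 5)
Your proposal follows the same overall strategy as the paper (truncation to $[0,R]$, Krein--Rutman for the truncated problem, a super-solution/maximum principle to get the uniform $O(1+x^k)$ bound, and a compactness passage to the limit). However, there is a genuine gap in the middle of the argument: you never establish a uniform-in-$R$ bound on the truncated eigenvalues $\lambda_R$, yet you invoke ``Arzel\`a--Ascoli then extracts locally uniformly convergent subsequences $\phi_R\to\phi$ and $\lambda_R\to\lambda$'' as if this were automatic. In the paper this bound is a separate, nontrivial step (Lemma~\ref{lem:boundonlambda}): one uses the integrating factor $e^{-\int_{x_R}^x(\lambda_R+B)/g}$, the uniform bound $\phi_R\le 1+x^k$, the Dirichlet condition $\phi_R(R)=0$, and a dominated-convergence argument to contradict $\lambda_R\to+\infty$. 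Without this, the sequence $\lambda_R$ could escape to $+\infty$ and no limiting eigenpair would exist.

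This omission also makes your compact-interval bound circular. You say that on $[0,M]$ ``locally uniform $C^1$ estimates from the equation together with the normalization $\phi_R(1)=1$ give a uniform $L^\infty$ bound,'' but any $C^1$ or $L^\infty$ estimate read off from the ODE has the form $|\phi_R'| \lesssim \frac{\lambda_R}{g}\phi_R + \frac{B}{g}|\cdot|$ (cf.\ Lemma~\ref{lem:boundonphiR}), so it \emph{requires} a bound on $\lambda_R$, which in turn (in the paper) requires the a priori $L^\infty$ bound. The paper breaks this loop by a cleaner ordering: normalize by $\sup_{[0,A]}\phi_R=1$ so that the super-solution $1+x^k$ dominates $\phi_R$ on $[0,A]$ \emph{by construction}, invoke the maximum principle on $[A,R]$ (where $\mathcal L_R(1+x^k)>0$, using $\lambda_R>0$, not a bound on $\lambda_R$), deduce the uniform $\phi_R\le 1+x^k$ bound, and only then bound $\lambda_R$ and $\phi_R'$. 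Your normalization $\phi_R(1)=1$ at a single point does not feed cleanly into either the maximum principle on $[0,A]$ (you need a comparison on a whole interval, not a point) or the proof of the $\lambda_R$ bound (the paper needs a point $x_R$ in a fixed compact $[0,A]$ where $\phi_R(x_R)=1$). Also, your super-solution $\psi_k(x)=x^k$ vanishes at $0$ while $\phi_R(0)>0$ when $\int_0^1 1/g<\infty$, so it cannot dominate there; the paper's choice $1+x^k$ is not cosmetic.

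Finally, for the boundary behavior at $0$, your integrating-factor sketch only shows $\phi(\epsilon)e^{-M(\epsilon)}\ge\phi(x)e^{-M(x)}>0$, which gives $\phi(\epsilon)\ge(\text{const})e^{M(\epsilon)}\to 0$, i.e.\ a vanishing \emph{lower} bound, not the claimed conclusion $\phi(0)=0$; establishing $\phi(0)=0$ (resp.\ $\phi(0)>0$) requires a two-sided estimate, which is why the paper delegates this to \cite[Theorem~1.10]{BCG13}.
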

	Notice that our only assumption on $p$ is that $p_0 < +\infty$ (see
	Remark~\ref{rk:kernel}). We prove this theorem at the end of the
	section.
	
	\medskip 
	
	Following the idea introduced in~\cite{PR05} and also used
	in~\cite{BCG13,DG09}, we begin with defining a truncated version of
	the dual Perron eigenproblem \eqref{eq:dualeigenfunction} in an
	interval $[0,R]$ for some $R>0$:
	\begin{equation} 
		\begin{split}
			\label{eq:trdualeigenfunction1}
			-g(x) \frac{\p}{\p x } \phi_R(x) + (B(x) + \lambda_R ) \phi_R(x)
			= \frac{B(x)}{x} \int_{0}^{R} p\left(\frac{y}{x}\right)\phi_R(y) \d y,
			\\
			\phi_R(x) >0\quad\text{for}\ x\in(0,R),
			\qquad
			\phi_R(R) = 0.
		\end{split}
	\end{equation}
	\
	
	Now we give some lemmas which will be used in the proof of Theorem
	\ref{thm:dualeigenfunction}. The existence of a solution to Equation \eqref{eq:trdualeigenfunction1} is a consequence of the Krein-Rutman
	theorem (see the appendices in~\cite{DG09} and~\cite{BCG13}). Moreover
	in \cite{BCG13}, the authors proved that there exists $R_0>0$ large
	enough such that for all $R>R_0$ we have $\lambda_R >0$.  We thus have
	the following result:
	
	\begin{lem}\label{lem:existencetruncated}
		For any $R>0$, the truncated dual Perron eigenproblem
		\eqref{eq:trdualeigenfunction1} admits a solution
		$(\lambda_R,\phi_R)$ with $\phi_R$ a Lipschitz function.  Moreover
		there exists $R_0>0$ such that $\lambda_R>0$ for all $R>R_0$.
	\end{lem}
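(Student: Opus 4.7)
The plan is to recast the truncated dual eigenproblem~\eqref{eq:trdualeigenfunction1} as a fixed-point equation for a compact positive integral operator on $C([0,R])$ and apply the Krein-Rutman theorem, essentially following the strategy outlined in the appendices of~\cite{BCG13,DG09}. First, treating $\lambda\in\R$ as a parameter and integrating the linear ODE backward from $x=R$ with the integrating factor $\exp(-\int_x^R(B+\lambda)/g\,\d s)$, and using $\phi_R(R)=0$ together with $\int_0^1 B/g<+\infty$ from Hypothesis~\ref{asmp:gB}, I would rewrite~\eqref{eq:trdualeigenfunction1} in the mild form
\begin{equation*}
\phi(x) = (\mathcal{K}_\lambda\phi)(x) := \int_x^R \frac{B(y)}{y\,g(y)}\exp\!\left(-\int_x^y\frac{B(s)+\lambda}{g(s)}\d s\right)\int_0^R p(w/y)\,\phi(w)\d w\d y.
\end{equation*}
The kernel is nonnegative, and the iterated integration is smoothing enough that Arzel\`a--Ascoli yields compactness of $\mathcal{K}_\lambda$ on $C([0,R])$; the irreducibility needed to apply the strong form of Krein--Rutman comes from the fact that the kernel connects every pair $0<x<y<R$ (via at least one fragmentation event, using $p_0<+\infty$ which is automatic under Hypothesis~\ref{asmp:p}).

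Krein-Rutman then produces, for every $\lambda\in\R$, a principal eigenvalue $r(\lambda)>0$ of $\mathcal{K}_\lambda$ with a strictly positive eigenvector, and the next task is to locate $\lambda_R$ as the unique value where $r(\lambda)=1$. Because the integrand of $\mathcal{K}_\lambda$ is pointwise strictly decreasing in $\lambda$, the map $\lambda\mapsto r(\lambda)$ is continuous and strictly decreasing; a dominated convergence argument gives $r(\lambda)\to 0$ as $\lambda\to+\infty$ and $r(\lambda)\to+\infty$ as $\lambda\to-\infty$, so the intermediate value theorem provides a unique $\lambda_R$ with $r(\lambda_R)=1$, and the associated eigenvector is the desired $\phi_R$. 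Lipschitz regularity is then read off~\eqref{eq:trdualeigenfunction1} directly: since $\phi_R\in C([0,R])$ and the integral term is continuous, $\phi_R'$ is controlled by $1/g$, which is bounded on the relevant subintervals.

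The only step where I anticipate substantive work is the assertion $\lambda_R>0$ for $R\geq R_0$, and here I would reproduce the test-function argument from~\cite{BCG13}. The idea is to pair~\eqref{eq:trdualeigenfunction1} against a carefully chosen weight --- a tuned power $x^\alpha$, or the direct truncated Perron eigenfunction $N_R$ obtained from the analogous Krein--Rutman construction --- integrate on $[0,R]$, and exploit the asymptotic condition $xB(x)/g(x)\to+\infty$ at infinity from Hypothesis~\ref{asmp:gB} to force the fragmentation contribution to dominate the drift contribution once $R$ is large. The delicate point is selecting the weight so that the boundary contribution at $x=0$ vanishes (using $xB(x)/g(x)\to 0$ there) and the boundary contribution at $x=R$ has the right sign (using $\phi_R(R)=0$ and $\phi_R'(R)\le 0$), leaving an integral inequality which, combined with $p_1=1$ and a strict inequality coming from the fragmentation multiplication by $2$, produces a strictly positive lower bound on $\lambda_R$. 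Alternatively, monotonicity of $R\mapsto\lambda_R$ (by extension-by-zero of eigenvectors as trial functions) and positivity of the limiting Malthusian exponent can be invoked to obtain the same conclusion.
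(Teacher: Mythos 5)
Your proposal follows essentially the same route that the paper delegates to \cite{DG09,BCG13}: rewrite the truncated dual problem as a fixed point of a compact positive integral operator obtained by integrating backward from $x=R$, apply Krein--Rutman, tune $\lambda$ so that the spectral radius equals $1$, and obtain $\lambda_R>0$ for large $R$ by a test-function/maximum-principle argument. One caveat worth flagging: your irreducibility remark (``the kernel connects every pair $0<x<y<R$ via at least one fragmentation event'') is false for the mitosis kernel $p=2\delta_{1/2}$, where a single fragmentation step only reaches $y/2$; the positivity of $\phi_R$ on $(0,R)$ in that case rests on iterating transport with fragmentation (primitivity of powers of $\mathcal{K}_\lambda$), and the Krein--Rutman eigenvector necessarily vanishes at $x=R$ and so cannot lie in the interior of the positive cone of $C([0,R])$ --- one must use the weak form of the theorem together with a separate strict-positivity argument on $[0,R)$, as in the cited references.
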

	
	Before proving uniform estimates on $(\lambda_R,\phi_R)$, we first
	recall a maximum principle. We begin by defining an operator
	$\mathcal{L}^*_R$, acting on once-differentiable functions
	$\varphi \in \mathcal{C}^{1}([0,R])$:
	\begin{equation*} \label{eq:max}
		\mathcal{L}^*_R \varphi(x):=  - g(x)\varphi'(x) +  \left( \lambda_R  + B(x) \right)\varphi(x)   - \frac{B(x)}{x}\int_{0}^{x}  p \left(\frac{y}{x} \right) \varphi(y)  \d y.
	\end{equation*} 
	We have the following maximum principle, see \cite[Appendix C]{DG09} or \cite[Section 3.2]{BCG13}:
	\begin{lem}
		\label{lem:maxprinciple}
		Suppose that  $ \varphi(x) \geq 0$ for $x \in [0,A]$ for some $A \in (0,R)$ with $\varphi(R) \geq 0$ and $\mathcal{L}^*_R \varphi(x) >0$ on $[A,R]$. Then $\varphi (x) \geq 0 $ on $[0,R]$.
	\end{lem}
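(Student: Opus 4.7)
The plan is to perform a Doob-style $h$-transform using the positive truncated eigenfunction $\phi_R$ provided by Lemma~\ref{lem:existencetruncated}. Since $\phi_R > 0$ on $(0,R)$, the quotient $\tilde\varphi := \varphi/\phi_R$ is well-defined and continuous there and has the same sign as $\varphi$. The key step is to substitute $\varphi = \tilde\varphi\, \phi_R$ into $\mathcal{L}_R\varphi$ and use the eigenvalue equation~\eqref{eq:trdualeigenfunction1} for $\phi_R$ to absorb the $(\lambda_R + B)$ terms. A direct computation then yields the identity
\[
\mathcal{L}_R\varphi(x) = -g(x)\phi_R(x)\,\tilde\varphi'(x) + \frac{B(x)}{x}\int_0^x p(y/x)\,\phi_R(y)\bigl[\tilde\varphi(x)-\tilde\varphi(y)\bigr]\d y.
\]
The payoff of this rewriting is that $\tilde\varphi$ now appears only through its derivative and through differences $\tilde\varphi(x)-\tilde\varphi(y)$, which makes a classical minimum-principle argument applicable.

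Next I would argue by contradiction. Assume that $\tilde\varphi$ takes a strictly negative value somewhere on $(A,R)$. Since $\tilde\varphi \geq 0$ on $(0,A]$ by the hypothesis $\varphi \geq 0$ on $[0,A]$, any negative infimum necessarily lies in $(A,R)$. If this infimum is attained at an interior point $x_0$, then $\tilde\varphi'(x_0)=0$, so the transport term in the identity vanishes; moreover $\tilde\varphi(x_0)-\tilde\varphi(y)\le 0$ for every $y\in (0,x_0]$, so the integrand in the second term is nonpositive and the integral is $\le 0$. This gives $\mathcal{L}_R\varphi(x_0) \le 0$, contradicting the hypothesis $\mathcal{L}_R\varphi > 0$ on $[A,R]$.

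The only delicate point — and the step I expect to be the main obstacle — is ensuring that a negative infimum of $\tilde\varphi$ is genuinely attained at an interior point, rather than merely approached as $x\to R^-$ where $\phi_R$ vanishes. To handle this I would pass to the slight perturbation $\varphi_\epsilon := \varphi + \epsilon$ for small $\epsilon>0$. A direct calculation gives $\mathcal{L}_R\varphi_\epsilon = \mathcal{L}_R\varphi + \epsilon\bigl(\lambda_R + B(1-p_0)\bigr)$, and since $\mathcal{L}_R\varphi$ is continuous and strictly positive on the compact $[A,R]$ it is bounded below by some $c>0$, while $B$ is bounded on the same interval; consequently $\mathcal{L}_R\varphi_\epsilon > 0$ still holds on $[A,R]$ for $\epsilon$ small enough. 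For this $\varphi_\epsilon$ one has $\varphi_\epsilon(R)\ge \epsilon>0$, forcing $\tilde\varphi_\epsilon = \varphi_\epsilon/\phi_R \to +\infty$ as $x\to R^-$, so any negative infimum of $\tilde\varphi_\epsilon$ must be attained at an interior point, and the contradiction from the previous paragraph applies. This yields $\varphi_\epsilon \ge 0$ on $[0,R]$, and letting $\epsilon\to 0^+$ gives $\varphi\ge 0$ on $[0,R]$. The conceptually non-trivial part is the initial choice of the weight $\phi_R$, which is precisely what cancels the awkward zeroth-order terms; once that is done, the remainder is standard bookkeeping.
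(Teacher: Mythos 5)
The paper does not give its own proof of this lemma; it defers to \cite[Appendix~C]{DG09} and \cite[Section~3.2]{BCG13}. So the comparison is between your argument and the cited literature rather than an in-text proof. Your proof is correct and self-contained. The $h$-transform identity
\[
\mathcal{L}_R(\tilde\varphi\,\phi_R)(x) = -g(x)\phi_R(x)\,\tilde\varphi'(x) + \frac{B(x)}{x}\int_0^x p\!\left(\frac{y}{x}\right)\phi_R(y)\bigl[\tilde\varphi(x)-\tilde\varphi(y)\bigr]\d y
\]
follows directly from the eigenequation $\mathcal{L}_R\phi_R=0$ (and the integral over $[0,R]$ in \eqref{eq:trdualeigenfunction1} is effectively over $[0,x]$ since $p$ is supported on $(0,1]$), and this is exactly the device that neutralises the sign-indeterminate zeroth-order contribution $\lambda_R+B(1-p_0)$ that blocks a naive minimum principle applied to $\varphi$ itself. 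The one genuinely delicate point — whether the negative infimum of $\tilde\varphi$ is attained rather than approached at $R$, where $\phi_R$ vanishes — is handled cleanly by the $\varepsilon$-perturbation, which forces $\tilde\varphi_\varepsilon\to+\infty$ at $R$ while preserving strict positivity of $\mathcal{L}_R\varphi_\varepsilon$ on the compact $[A,R]$ for $\varepsilon$ small. Two small points worth making explicit: first, $\phi_R$ is only asserted Lipschitz in Lemma~\ref{lem:existencetruncated}, but the eigenequation itself upgrades this to $\mathcal{C}^1$ on $(0,R)$ since $\phi_R' = g^{-1}\bigl[(\lambda_R+B)\phi_R - \tfrac{B}{x}\int_0^x p(y/x)\phi_R(y)\d y\bigr]$ has a continuous right-hand side, so $\tilde\varphi$ is indeed differentiable at the interior minimiser; second, the argument uses $p\geq0$ but not $p_0<\infty$ beyond boundedness of the $\varepsilon$-correction $\varepsilon(\lambda_R+B(1-p_0))$ on $[A,R]$, which holds here since $p_0=2$ in both admissible kernels. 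This comparison-with-$\phi_R$ approach is in the same spirit as the cited proofs, and it also fits the overall methodology of the paper, which later performs the analogous $h$-transform with the limiting $\phi$.
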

	
	This maximum principle allows us to get a uniform upper bound on
	$\phi_R$, for a suitable normalization.
	
	\begin{lem}\label{lem:bounonphi}
		Consider that Hypotheses \ref{asmp:k1} and \ref{asmp:gB} are
		satisfied, and that $p_0<+\infty$.  For any $k>1$, there exists
		$A>0$ such that if $\phi_R$ is normalized such that
		\begin{equation} \label{norm}
			\underset{x \in [0,A]}{ \sup }  \, \, \phi_R (x)= 1,
		\end{equation}
		then for all $R>\max\{A,R_0\}$ and for all $x \in (0,R]$ we have  
		\[0 < \phi_R(x)\leq 1 +x^k.\]
		Additionally, $\phi_R(0)>0$ when $\int_0^1\frac1g<+\infty$ and $\phi_R(0)=0$ when $\int_0^1\frac1g=+\infty$.
	\end{lem}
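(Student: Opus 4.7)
The natural strategy is to apply the maximum principle of Lemma~\ref{lem:maxprinciple} to the comparison function $\varphi(x):=1+x^k-\phi_R(x)$. The normalisation~\eqref{norm} immediately gives $\varphi\geq 0$ on $[0,A]$, while the Dirichlet condition $\phi_R(R)=0$ yields $\varphi(R)=1+R^k\geq 0$. The whole work is thus to choose $A$ \emph{independently of $R$} so that $\mathcal{L}_R\varphi>0$ on $[A,R]$.

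Since $\mathcal{L}_R\phi_R=0$, we have $\mathcal{L}_R\varphi=\mathcal{L}_R(1+x^k)$. A change of variable $z=y/x$ in the integral term of $\mathcal{L}_R$ and a direct computation give
\[
\mathcal{L}_R(1+x^k)=\lambda_R(1+x^k)+B(x)(1-p_0)+B(x)x^k(1-p_k)-k\,g(x)x^{k-1}.
\]
For $k>1$ the moment inequality $p_k<p_1=1$ holds strictly (the borderline case $p$ concentrated at $z=1$ being excluded as noted in Remark~\ref{rk:kernel}), whereas $p_0\geq p_1=1$ always. Dividing by $g(x)x^{k-1}>0$, the positivity of $\mathcal{L}_R(1+x^k)$ reduces to
\[
\frac{xB(x)}{g(x)}\Big[(1-p_k)-\frac{p_0-1}{x^k}\Big]+\frac{\lambda_R(1+x^k)}{g(x)x^{k-1}}>k.
\]
By Hypothesis~\ref{asmp:gB}, $xB(x)/g(x)\to+\infty$ as $x\to+\infty$, while $(p_0-1)/x^k\to 0$; hence one can fix $A>0$, depending only on $k$, $p_0$, $p_k$ and the asymptotics of $xB/g$, such that the first summand alone exceeds $k$ on $[A,+\infty)$. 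Since $\lambda_R>0$ for $R>R_0$ (Lemma~\ref{lem:existencetruncated}), the $R$-dependent term is a favourable addition and $\mathcal{L}_R(1+x^k)>0$ holds on $[A,R]$ uniformly in every admissible $R$. Lemma~\ref{lem:maxprinciple} then yields $\phi_R\leq 1+x^k$ on $[0,R]$, with strict positivity on $(0,R)$ coming from~\eqref{eq:trdualeigenfunction1}.

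For the behaviour at $x=0$, rewrite the eigenproblem as the scalar linear ODE
\[
\phi_R'(x)=\frac{B(x)+\lambda_R}{g(x)}\phi_R(x)-\frac{I(x)}{g(x)},\qquad I(x):=B(x)\int_0^1 p(z)\phi_R(xz)\,\d z,
\]
and apply variation of constants: for $0<x<x_0<R$,
\[
\phi_R(x)=\phi_R(x_0)\exp\Big(-\!\int_x^{x_0}\tfrac{B+\lambda_R}{g}\Big)+\int_x^{x_0}\exp\Big(-\!\int_x^t\tfrac{B+\lambda_R}{g}\Big)\frac{I(t)}{g(t)}\,\d t.
\]
The upper bound just obtained together with $p_0<+\infty$ gives $I(t)/g(t)\leq(1+x_0^k)p_0\,B(t)/g(t)$, which is integrable on $(0,x_0)$ by~\eqref{eq:gB}. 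If $\int_0^1 1/g<+\infty$ then $\int_0^{x_0}(B+\lambda_R)/g$ is finite, the integrating factor has a strictly positive limit at $x=0$, and the first term alone forces $\phi_R(0)>0$. If instead $\int_0^1 1/g=+\infty$ then $\lambda_R\int_x^{x_0}1/g\to+\infty$ as $x\to 0^+$, so both exponential kernels vanish uniformly on $[x,x_0]$, and dominated convergence drives both summands to $0$, giving $\phi_R(0)=0$.

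\textbf{Main obstacle.} The real difficulty is ensuring that the threshold $A$ is truly independent of $R$, which will be essential to pass later to the limit $R\to+\infty$ in order to construct $\phi$. This works because the only $R$-dependent contribution to $\mathcal{L}_R(1+x^k)$ is the favourable term $\lambda_R(1+x^k)>0$, while the competition between the remaining summands is controlled pointwise by the asymptotic Hypothesis~\ref{asmp:gB}; a minor secondary point is handling the two regimes $\int_0^11/g<+\infty$ and $=+\infty$ separately in the boundary analysis, which the variation-of-constants formula accommodates transparently.
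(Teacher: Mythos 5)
Your argument for the upper bound is essentially the paper's. Both proofs compute $\mathcal{L}_R(1+x^k)=\lambda_R(1+x^k)+B(x)(1-p_0)+B(x)x^k(1-p_k)-kg(x)x^{k-1}$, drop the favourable $\lambda_R$ term so that the remaining expression is independent of $R$, and use Hypothesis~\ref{asmp:gB} to show this expression is positive on $(A,R)$ for a fixed $A$; the maximum principle applied to $1+x^k-\phi_R$ then yields the bound. You are a bit more careful in two harmless respects: you state explicitly that the comparison function in Lemma~\ref{lem:maxprinciple} is $1+x^k-\phi_R$ rather than $1+x^k$ (which is what the lemma actually requires), and you keep $p_0$ general whereas the paper specialises to $p_0=2$; dividing by $g(x)x^{k-1}$ to isolate the dominant quantity $xB(x)/g(x)$ is a cleaner but equivalent way of reading off $A$.

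Where you genuinely depart from the paper is the claim on $\phi_R(0)$: the paper simply cites \cite[Theorem~1.10]{BCG13}, whereas you give a self-contained argument by variation of constants. Your argument is correct: the just-proved bound $\phi_R\leq 1+x^k$ together with $p_0<+\infty$ and $\int_0^1 B/g<+\infty$ makes the source $I/g$ integrable at $0$, and then the two regimes split on whether the integrating factor $\exp\big(-\int_x^{x_0}(B+\lambda_R)/g\big)$ has a positive limit ($\int_0^1 1/g<+\infty$) or tends to $0$ because $\lambda_R>0$ ($\int_0^1 1/g=+\infty$), with dominated convergence handling the Duhamel term in both cases. One slight imprecision of phrasing: the kernel $\exp\big(-\int_x^t (B+\lambda_R)/g\big)$ does \emph{not} vanish uniformly in $t$ on $[x,x_0]$ as $x\to0^+$ (it equals $1$ at $t=x$); it vanishes pointwise for each fixed $t>0$, which, combined with the $L^1$ majorant $I/g$, is exactly what dominated convergence requires, so the conclusion is unaffected. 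This explicit ODE argument is a welcome replacement for the paper's external citation.
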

	\begin{proof}
		For the bound from above we want to use the maximum principle in Lemma~\ref{lem:maxprinciple}. Therefore we want to prove that $\mathcal{L}^*_R \varphi(x) >0$ for $ x \in (A,R)$ with $A \in (0,R)$ as in Lemma~\ref{lem:maxprinciple}.
		We take $\varphi(x) = 1 + x^k$ for some $k>1$. Then for $R \geq R_0$ we have 
		\begin{align*} \label{psi}
			\begin{split}
				\mathcal{L}^*_R \varphi(x) &=  \lambda_R (1+x^k)  - kg(x) x^{k-1} + B(x) \left( (1 + x^k) - \frac{1}{x}\int_{0}^{x} (1 + y^k) p \left(\frac{y}{x}\right) \d y  \right) 
				\\ &= \lambda_R (1+x^k)  - kg(x) x^{k-1} + B(x) (1 + x^k - p_0 - x^{k}p_k) 
				\\ &>  x^{k-1} \left( -k g(x) -  B(x) x^{1-k} + (1-p_k) B(x)x \right) := \varrho (x)
			\end{split}
		\end{align*}
		since $p_0 = 2$ and $0< p_k < 1=p_1$ for $k>1$. 
		Moreover assuming \eqref{eq:gB} gives that behaviour of $\varrho$ will be dominated by the positive term $(1-p_k) B(x)x^k > 0$. 
		Therefore, we can find $A(k)>0$ such that for all $A(k) <x <R$, we have $\mathcal{L}^*_R \varphi(x) >0$.
		We fix such a $A> 0$ and normalize $\phi_R$ by~\eqref{norm}.
		Then by the maximum principle in Lemma~\ref{lem:maxprinciple} we obtain that $\phi_R(x)\leq1+x^k$.
		The positivity or nullity of $\phi_R(0)$ is a
		direct consequence of~\cite[Theorem 1.10]{BCG13}.
	\end{proof}

	\begin{lem}
		\label{lem:boundonlambda}
		Under Hypotheses \ref{asmp:k1} and \ref{asmp:gB} with $p_0<+\infty$,
		there exists a constant $C>0$ such that $\lambda_R \leq C$ for all
		$R >R_0$.
	\end{lem}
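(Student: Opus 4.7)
The plan is to evaluate the truncated dual equation~\eqref{eq:trdualeigenfunction1} at a point where the sign of $\phi_R'$ can be controlled, using the normalization $\sup_{[0,A]} \phi_R = 1$ so that the resulting bound only involves the value of $B$ on the bounded interval $[0, A]$ (on which $B$ is continuous, hence bounded).

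Concretely, I would let $x_\star$ denote the largest point in $[0, A]$ at which $\phi_R(x_\star) = 1$. Assuming $x_\star \in (0, A)$, the choice of $x_\star$ gives $\phi_R(y) < 1 = \phi_R(x_\star)$ on $(x_\star, A]$, and the Lipschitz regularity from Lemma~\ref{lem:existencetruncated} yields $\phi_R'(x_\star) \leq 0$. Substituting $x = x_\star$ into~\eqref{eq:trdualeigenfunction1} and using $\phi_R(y) \leq 1$ for $y \in [0, x_\star]$ then gives
\begin{equation*}
B(x_\star) + \lambda_R
\;=\;
g(x_\star)\,\phi_R'(x_\star) + \frac{B(x_\star)}{x_\star}\int_0^{x_\star} p\!\left(\tfrac{y}{x_\star}\right)\phi_R(y)\,dy
\;\leq\;
B(x_\star)\,p_0,
\end{equation*}
whence $\lambda_R \leq (p_0 - 1)\,B(x_\star) \leq (p_0 - 1)\max_{[0,A]} B$, a quantity which is finite and independent of $R$ by continuity of $B$ (Hypothesis~\ref{asmp:gB}).

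It remains to exclude the degenerate cases in which the largest point of attainment lies at one of the endpoints of $[0, A]$. For $x_\star = 0$ one passes to the limit $x \to 0^+$ in the equation, distinguishing the cases $\int_0^1 1/g < +\infty$ and $=+\infty$ as in the proof of Lemma~\ref{lem:bounonphi}. The case $x_\star = A$ is handled by enlarging $A$: inspection of the proof of Lemma~\ref{lem:bounonphi} shows that its conclusion remains valid for any $A' \geq A$ after re-normalizing so that $\sup_{[0, A']} \phi_R = 1$ (which does not alter $\lambda_R$), so $A$ may be taken as large as needed. The main obstacle is to show that a single $A$, independent of $R$, suffices to ensure that the supremum on $[0, A]$ is attained strictly inside; this will be accomplished by a contradiction argument exploiting both the pointwise bound $\phi_R \leq 1 + x^k$ combined with $p_k < 1$ for $k > 1$ (which prevents $\phi_R$ from being extremal arbitrarily far out) and the Dirichlet condition $\phi_R(R) = 0$ (which forces a genuine interior maximum of $\phi_R$ on $[0, R]$).
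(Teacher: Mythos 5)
Your core computation is correct as far as it goes: at a point $x_\star$ where $\phi_R(x_\star)=1$, $\phi_R'(x_\star)\le 0$ and $\phi_R\le 1$ on $[0,x_\star]$, the equation indeed forces $\lambda_R\le (p_0-1)B(x_\star)$, and this is a genuinely different, local route from the paper's global argument. But the step you flag as the ``main obstacle'' is not a technical loose end that a contradiction argument will close: it is a real gap, and in fact the situation you need to exclude ($x_\star=A$) is the \emph{generic} one, not a degenerate one. For many admissible coefficients the limiting dual eigenfunction $\phi$ is strictly increasing on $[0,A]$ (for instance $\phi(x)=x$ when $g(x)=x$), and since $\phi_R\to\phi$ locally uniformly with $\phi_R'\to\phi'>0$ on compacts, for all large $R$ the function $\phi_R$ is strictly increasing on $[0,A]$, so $\sup_{[0,A]}\phi_R$ is attained \emph{only} at $A$, with $\phi_R'(A)>0$. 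There is then no interior maximum of $\phi_R$ in $[0,A]$ to which you can apply your argument. Enlarging $A$ does not repair this: the Dirichlet condition $\phi_R(R)=0$ guarantees an interior global maximum $x^*_R\in(0,R)$, but when $\phi$ is unbounded one can check that $x^*_R\to+\infty$ and $\phi_R(x^*_R)\to+\infty$ as $R\to\infty$, so the bound $\lambda_R\le (p_0-1)B(x^*_R)$ becomes vacuous. Neither the comparison $\phi_R\le 1+x^k$ nor $p_k<1$ prevents the maximizer from escaping to infinity, because those facts control the \emph{size} of $\phi_R$, not the \emph{location} of its maximum.

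The paper avoids this entirely by not localizing at a maximum point. Starting from a point $x_R\in[0,A]$ with $\phi_R(x_R)=1$, it integrates the ODE form of the equation from $x_R$ to $R$, uses $\phi_R(R)=0$ together with the upper bound $\phi_R\le 1+x^k$ to derive the integral inequality
\begin{equation*}
\int_{x_R}^{R}\frac{B(y)}{g(y)}\exp\!\left(-\int_{x_R}^{y}\frac{\lambda_R+B(s)}{g(s)}\,\d s\right)\bigl(p_0+p_k y^k\bigr)\,\d y\ge 1,
\end{equation*}
and then shows by dominated convergence (using the integrability guaranteed by Hypothesis~\ref{asmp:gB}) that the left-hand side would tend to $0$ along any sequence with $\lambda_{R_n}\to+\infty$, a contradiction. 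This global step, which exploits the boundary condition at $R$ through an integral rather than a pointwise identity, is exactly what your local argument is missing; if you want to complete your approach you would have to supply an estimate of this kind, at which point you have essentially reproduced the paper's proof.
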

	
	\begin{proof}
		Since $\phi_R$ is continuous and by \eqref{norm}, there exists
		$x_R\in[0,A]$ such that $\phi_R(x_R)=1.$ Notice that necessarily
		$x_R>0$ when $\int_0^1\frac1g=+\infty$, since $\phi_R(0)=0$ is the
		case.  Moreover, the equation $\mathcal{L}^*_R \phi_R=0$ ensures that
		for all $x>0$ we have
		\begin{multline*}
			\left(\phi_R(x) \exp \left( -\int_{x_R}^x \frac{\lambda_R+B(s)}{g(s)} \d s\right) \right)' 
			\\ = - \frac{B(x)}{x g(x)} \exp \left( -\int_{x_R}^x \frac{\lambda_R+B(s)}{g(s)} \d s\right) \int_{0}^{x} p \left(\frac{y}{x}\right) \phi_R(y) \d y.
		\end{multline*} By integrating this from $x_R$ to $x\geq x_R$;
		\begin{align*}
			\phi_R(x) &\exp \left( -\int_{x_R}^x \frac{\lambda_R+B(s)}{g(s)} \d s\right)  - 1\\
			&= - \int_{x_R}^{x} \frac{B(y)}{y g(y)} \exp \left( -\int_{x_R}^y \frac{\lambda_R+B(s)}{g(s)} \d s\right) \int_{0}^{y} p \left(\frac{z}{y}\right) \phi_R(z) \d z \d y.
		\end{align*}
		By using the upper bound on $\phi_R$ we obtain, for $R> R_0$,
		\begin{align*}
			\phi_R(x)&\exp \left({-\int_{x_R}^x \frac{\lambda_R+B(s)}{g(s)} \d s} \right) 
			\\ &\geq 1 - \int_{x_R}^{x} \frac{B(y)}{y g(y)} \exp \left( -\int_{x_R}^{y} \frac{\lambda_R+B(s)}{g(s)} \d s \right) \int_{0}^{y} p \left(\frac{z}{y}\right) (1+z^k) \d z \d y
			\\ &\geq 1 - \int_{x_R}^{x} \frac{B(y)}{g(y)} \exp \left(-\int_{x_R}^{y} \frac{\lambda_R+B(s)}{g(s)} \d s \right) \left( p_0 + p_k y^k\right) \d y.
		\end{align*}
		Since $\phi_R(R)=0$ we deduce that for all $R>R_0$, 
		\begin{equation}\label{eq:lowerbound} \int_{x_R}^{R} \frac{B(y)}{g(y)} \exp \left(-\int_{x_R}^{y} \frac{\lambda_R+B(s)}{g(s)} \d s \right) \left( p_0 + p_k y^k\right) \d y\geq 1,\end{equation}
		and this enforces $\lambda_R$ to be bounded from above.
		Indeed, otherwise, there would exist a sequence $(R_n)_{n\geq0}$ and $x_\infty\in[0,A]$ such that
		\[R_n\to+\infty,\qquad \lambda_{R_n}\to+\infty,\qquad x_{R_n}\to x_\infty.\]
		But in that case, since
		\begin{align*}
			\mathds{1}_{[x_{R_n},R_n]}(y)\frac{B(y)}{g(y)}& \exp \left(-\int_{x_{R_n}}^{y} \frac{\lambda_{R_n}+B(s)}{g(s)} \d s \right) \left( p_0 + p_k y^k\right)\\
			&\leq\frac{B(y)}{g(y)} \exp \left(-\int_{A}^{y} \frac{B(s)}{g(s)} \d s \right) \left( p_0 + p_k y^k\right)
		\end{align*}
		and the latter function is integrable on $[0,+ \infty)$ (carry out
		an integration by parts and use~\eqref{eq:gB}), the dominated
		convergence theorem ensures that
		\[\int_{x_{R_n}}^{R_n} \frac{B(y)}{g(y)} \exp \left(-\int_{x_{R_n}}^{y} \frac{\lambda_{R_n}+B(s)}{g(s)} \d s \right) \left( p_0 + p_k y^k\right) \d y\to0,\]
		which contradicts~\eqref{eq:lowerbound}.	
	\end{proof}

	\begin{lem}
		\label{lem:boundonphiR}
		Under Hypotheses \ref{asmp:k1} and \ref{asmp:gB} with $p_0<+\infty$,
		$|\phi_R'(x)|$ is uniformly bounded on compact intervals for all
		$R> R_0$.
	\end{lem}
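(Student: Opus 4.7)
The plan is to solve algebraically for $\phi_R'(x)$ from the eigenvalue equation and then bound each term uniformly in $R$ on compact subintervals of $(0,+\infty)$, using the estimates already obtained. Since $p$ is supported in $(0,1]$, the equation $\mathcal{L}_R\phi_R=0$ can be rewritten as
\begin{equation*}
g(x)\,\phi_R'(x) = (B(x)+\lambda_R)\,\phi_R(x) - \frac{B(x)}{x}\int_0^x p\!\left(\frac{y}{x}\right)\phi_R(y)\,\d y.
\end{equation*}

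Now fix a compact interval $[a,b]\subset(0,+\infty)$. Since $g$ is continuous and strictly positive on $(0,+\infty)$, there exists $g_{\min}>0$ such that $g(x)\geq g_{\min}$ on $[a,b]$; similarly $B$ is bounded above on $[a,b]$ by some $B_{\max}$, and by Lemma~\ref{lem:boundonlambda} we have $\lambda_R\leq C$ uniformly for $R>R_0$. By Lemma~\ref{lem:bounonphi}, normalising $\phi_R$ by~\eqref{norm} yields $\phi_R(y)\leq 1+y^k$ for all $y\in(0,R]$, so the first term on the right-hand side is bounded by $(B_{\max}+C)(1+b^k)$ on $[a,b]$, uniformly in $R$. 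For the integral term, the same pointwise bound on $\phi_R$ together with $p_0,p_k<+\infty$ gives
\begin{equation*}
\frac{B(x)}{x}\int_0^x p\!\left(\frac{y}{x}\right)\phi_R(y)\,\d y
\leq B(x)\bigl(p_0+p_k\,x^k\bigr)
\leq B_{\max}\bigl(p_0+p_k\,b^k\bigr),
\end{equation*}
again uniform in $R$. Dividing by $g(x)\geq g_{\min}$ then yields a uniform bound on $|\phi_R'(x)|$ on $[a,b]$, which is the claim.

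There is no real obstacle here — the work is mostly bookkeeping on top of Lemmas~\ref{lem:bounonphi} and~\ref{lem:boundonlambda}. The only subtle point worth emphasising is that ``compact intervals'' must be read as compact subsets of $(0,+\infty)$: under Hypothesis~\ref{asmp:gB} the function $g$ may behave like $x^{-\xi}$ at the origin and vanish there (or, more importantly, one can only ensure $g$ is bounded below by a positive constant away from $0$), so the division by $g(x)$ in the argument is only legitimate away from $x=0$. This is precisely what is needed in the sequel, where uniform equicontinuity on compact subsets of $(0,+\infty)$ is invoked to extract a subsequence via Arzelà–Ascoli when passing $R\to+\infty$ in the proof of Theorem~\ref{thm:dualeigenfunction}.
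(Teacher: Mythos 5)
Your proof is correct and follows the same approach as the paper: isolate $\phi_R'$ from the equation $\mathcal{L}_R\phi_R=0$ and bound the resulting pieces term by term using the estimates on $\phi_R$ from Lemma~\ref{lem:bounonphi} and on $\lambda_R$ from Lemma~\ref{lem:boundonlambda}. If anything your write-up is slightly cleaner than the paper's: you bound $(B(x)+\lambda_R)\phi_R(x)$ and the integral term separately, whereas the paper substitutes the upper bound $1+x^k$ for both occurrences of $\phi_R$ inside the absolute value of a \emph{difference} (which is not a valid estimate for the modulus of a difference, and moreover the paper drops a factor of $1+x^k$ in its last line), and your remark that ``compact intervals'' must mean compact subsets of $(0,+\infty)$ --- since $g$ may vanish at the origin --- is exactly the reading that is used later, via Arzel\`a--Ascoli, to extract a locally uniformly convergent subsequence in the proof of Theorem~\ref{thm:dualeigenfunction}.
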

	
	\begin{proof}
		By the equation $\mathcal{L}^*_R\phi_R(x)=0$ and bounds on $\phi_R(x)$
		and $\lambda_R$ we obtain
		\begin{align*}
			| \phi_R'(x) |   &= \frac{\lambda_R \phi_R(x)}{g(x)} + \frac{B(x)}{g(x)}  \, \left | \phi_R(x) - \frac{1}{x} \int_{0}^{x}  p \left(\frac{y}{x} \right) \phi_R(y) \d y \right | 
			\\ &\leq \frac{\lambda_R }{g(x)} (1+x^k)+ \frac{B(x)}{g(x)} \, \left| 1+ x^k- \frac{1}{x} (1+x^k)\int_{0}^{x}  p \left(\frac{y}{x}  \right) \d y\right |
			\\ &\leq \frac{\lambda_R }{g(x)} (1+x^k)+ \frac{B(x)}{g(x)} \, \left| 1+ x^k- \frac{1}{x} (1+x^k)x p_0\right | 
			\\ &\leq \frac{\lambda_R }{g(x)} (1+x^k)+ \frac{B(x)}{g(x)} \, |1- p_0 |,
		\end{align*}
		which gives a bound on $\phi_R' (x)$ for all $R > R_0$, taking
		into account that $\lambda_R$ is uniformly bounded for all
		$R > R_0$ thanks to Lemma \ref{lem:boundonlambda}.
	\end{proof}
	
	\begin{proof}[Proof of Theorem \ref{thm:dualeigenfunction}]
		Lemmas \ref{lem:existencetruncated}, \ref{lem:bounonphi},
		\ref{lem:boundonlambda} and \ref{lem:boundonphiR} give the proof.
		Since there exists a solution to the truncated dual Perron
		eigenproblem \eqref{eq:trdualeigenfunction1} for any $R >0$ by Lemma~\ref{lem:existencetruncated}, it only remains to prove that the
		terms are bounded in order to pass to the limit as $R \to + \infty$.
		We provide the bounds on $\phi_R$, $\lambda_R$ and $\phi_R'$ by Lemmas
		\ref{lem:bounonphi}, \ref{lem:boundonlambda}, \ref{lem:boundonphiR}
		respectively.
		These bounds ensure that we can extract a subsequence of
		$(\lambda_R)$ which converges to $\lambda>0$ and a subsequence of
		$(\phi_R)$ which converges locally uniformly to a limit $\phi$ which
		satisfies $0 < \phi(x)\leq 1+x^k.$ Clearly $(\lambda,\phi)$ is the
		solution to the dual Perron eigenproblem
		\eqref{eq:dualeigenfunction}, and $\phi\not\equiv0$ since
		$\sup_{x \in [0,A]} \phi (x)= 1.$
		Similarly, the proof of the positivity or nullity of $\phi(0)$ is a
		direct consequence of~\cite[Theorem 1.10]{BCG13}.
	\end{proof}
	
	\section{Harris's Theorem}
	\label{sec:harris}
	
	In this section, we state Harris's theorem based on \cite{H16} and
	\cite{HM11}. The original idea comes from the study of discrete-time
	Markov processes and dates back to Doeblin and \citep{H56} where
	conditions of existence and uniqueness of having an equilibrium (or an
	\emph{invariant measure}) for a Markov process are investigated. It is
	a probabilistic method which relies on both a minorisation property
	and a drift condition (also called Foster-Lyapunov condition), which
	we describe below.
	
	We use Harris's theorem applied to continuous-time Markov processes in
	order to show that solutions to rescaled growth-fragmentation
	equation \eqref{eq:gfscaledgeneral}, under suitable assumptions,
	converge towards a universal profile at an exponential rate.
	
	\
	We assume that $\Omega$ is a Polish space and $(\Omega, \Sigma)$ is a
	measurable space together with its Borel $\sigma$-algebra $\Sigma$, so
	that $\Omega $ endowed with any probability measure is a Lebesgue
	space. Moreover we denote the space of finite measures on $\Omega$ by
	$\mathcal{M}(\Omega)$ and the space of probability measures on
	$\Omega$ by $\mathcal{P}(\Omega)$.
	
	A discrete-time Markov process $x$ is defined through a
	\emph{transition probability function}. A linear, measurable function
	$S \: \Omega \times \Sigma \mapsto \mathcal{P}(\Omega)$ is a
	transition probability function if $S(x, \cdot)$ is a probability
	measure for every $x$ and $x \mapsto S(\cdot, A)$ is a measurable
	function for every $A \in \Sigma$. By using the transition probability
	function we can define the associated Markov operator $\mathcal{S}$
	acting on the space of signed measures on $\Omega$ and its adjoint
	$\mathcal{S}^*$ acting on the space of bounded measurable functions
	$\varphi : \Omega \mapsto [0, +\infty)$ in the following way:
	\begin{equation*}
		(\mathcal{S} \mu) (A)
		= \int_{\Omega} S(x,A) \mu (\d x),
		\qquad (\mathcal{S}^* \varphi) (x) = \int_{\Omega} \varphi(y) S(x, \d y).  
	\end{equation*}
	On the other hand, a continuous-time Markov process is no longer
	described by a single transition function, but by a family of
	transition probability functions $S_t$ defined for each time $t
	>0$, with the property that the associated operators $\mathcal{S}_t$
	satisfy
	\begin{itemize}
		\item the semigroup property:
		$\mathcal{S}_{s+t} = \mathcal{S}_s \mathcal{S}_t$,
		\item and $\mathcal{S}_0$ is the identity, or equivalently,
		$S_0(x, \cdot) = \delta_x$ for all $x \in \Omega$.
	\end{itemize}
	We notice that $\mathcal{S}_{t}$ is \emph{linear, mass preserving} and
	\emph{positivity preserving}.  An \emph{invariant measure} of a
	continuous-time Markov process $(\mathcal{S}_t)_{t \geq 0}$ is a
	probability measure $\mu$ on $\Omega$ such that $\mathcal{S}_t \mu = \mu $
	for every $t \geq 0$, and it is the main concept we need to
	investigate when studying the asymptotic behaviour of a Markov process.
	
	\
	
	Let us state Doeblin's and Harris's theorems along with some
	hypotheses. We always assume $(\mathcal{S}_t)_{t \geq 0}$ is a
	continuous-time Markov semigroup. For their proofs we refer to
	\cite{MT93} or \cite{HM11,H16}.
	
	\begin{hyp}[Doeblin's condition]
		\label{hyp:Doeblin}
		There exists a time $t_0 >0$, a probability distribution $\nu$ and a
		constant $\alpha \in (0,1)$ such that for any initial condition
		$x_0$ in the domain we have:
		\[ \mathcal{S}_{t_0} \delta_{x_0} \geq \alpha \nu .\]
	\end{hyp}
	Using this we prove the following theorem:
	
	\begin{thm}[Doeblin's Theorem] \label{thm:Doeblin}
		
		If we have a Markov semigroup $(\mathcal{S}_t)_{t \geq 0}$
		satisfying Doeblin's condition (Hypothesis \ref{hyp:Doeblin}) then
		for any two finite measures $\mu_1$ and $\mu_2$ and any integer
		$n \geq 0$ we have that
		\begin{equation*}
			\label{eqn:Doeblin1}
			\left\| \mathcal{S}^{n}_{t_0} (\mu_1 -  \mu_2)\right\|_{\mathrm{TV}} \leq (1-\alpha) ^n\left\| \mu_1 - \mu_2 \right\|_{\mathrm{TV}}.
		\end{equation*}
		As a consequence, the semigroup has a unique invariant probability
		measure $\mu_*$, and for all probability measures $\mu$:
		\begin{equation*}
			\label{eqn:Doeblin2}
			\left\| \mathcal{S}_{t} ( \mu - \mu_*) \right\|_{\mathrm{TV}}  \leq C e^{-\rho t} \left\| \mu - \mu_*\right\|_{\mathrm{TV}}, \quad t \geq 0,
		\end{equation*}
		where 
		\[ C := \frac{1}{1-\alpha} >0,
		\qquad   \rho := \frac{-\log(1-\alpha)}{t_0} >0 .\]
	\end{thm}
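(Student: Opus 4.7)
The plan is to establish the discrete-time contraction for the single operator $\mathcal{S}_{t_0}$ first, and then deduce from it both the existence of the invariant measure $\mu_*$ and the continuous-time exponential bound via a standard interpolation. The main technical content is concentrated in the first step; the remaining parts are a Banach fixed-point argument and a reduction from continuous to discrete time.

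For the first inequality, I would use the Hahn--Jordan decomposition $\mu_1 - \mu_2 = \eta_+ - \eta_-$, where $\eta_\pm$ are mutually singular nonnegative measures with equal total mass $m = \tfrac12\|\mu_1 - \mu_2\|_{\mathrm{TV}}$. If $m = 0$ the inequality is trivial, so I assume $m > 0$ and normalise $\tilde{\eta}_\pm := \eta_\pm/m$ to probability measures. Integrating the pointwise Doeblin minorisation $\mathcal{S}_{t_0}\delta_x \geq \alpha \nu$ against $\tilde{\eta}_\pm$ shows that $\mathcal{S}_{t_0}\tilde{\eta}_\pm \geq \alpha \nu$, so there exist probability measures $\rho_\pm$ with
\[
  \mathcal{S}_{t_0}\tilde{\eta}_\pm \;=\; \alpha \nu + (1-\alpha)\rho_\pm .
\]
The common minorants $\alpha \nu$ cancel in the difference, leaving $\mathcal{S}_{t_0}(\mu_1 - \mu_2) = m(1-\alpha)(\rho_+ - \rho_-)$, whose total variation is bounded by $2m(1-\alpha) = (1-\alpha)\|\mu_1 - \mu_2\|_{\mathrm{TV}}$. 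Iterating $n$ times yields the first bound. This cancellation of the common minorant is the only genuinely nontrivial step of the proof: it is what converts a pointwise minorisation hypothesis into a quantitative contraction.

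Once the strict contraction is in hand, the existence of a unique invariant probability measure $\mu_*$ for $\mathcal{S}_{t_0}$ follows from a Banach fixed-point argument: for any probability measure $\mu_0$, the sequence $(\mathcal{S}_{t_0}^n \mu_0)_n$ is Cauchy in total variation, its limit $\mu_*$ is manifestly $\mathcal{S}_{t_0}$-invariant, and uniqueness is forced by the strict contraction. Invariance under the whole continuous-time semigroup is then obtained from commutativity: for any $s \geq 0$, $\mathcal{S}_{t_0}(\mathcal{S}_s \mu_*) = \mathcal{S}_s(\mathcal{S}_{t_0}\mu_*) = \mathcal{S}_s \mu_*$, so $\mathcal{S}_s \mu_*$ is also $\mathcal{S}_{t_0}$-invariant, and uniqueness forces $\mathcal{S}_s\mu_* = \mu_*$.

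For the continuous-time bound, I would decompose $t \geq 0$ as $t = nt_0 + s$ with $s \in [0,t_0)$, and use that Markov semigroups are nonexpansive in total variation, so
\[
  \|\mathcal{S}_t(\mu - \mu_*)\|_{\mathrm{TV}} \;=\; \|\mathcal{S}_s \mathcal{S}_{t_0}^n(\mu - \mu_*)\|_{\mathrm{TV}} \;\leq\; (1-\alpha)^n \|\mu - \mu_*\|_{\mathrm{TV}} .
\]
Since $s < t_0$ and $1-\alpha < 1$, I have $(1-\alpha)^n = (1-\alpha)^{-s/t_0}(1-\alpha)^{t/t_0} \leq (1-\alpha)^{-1} e^{-\rho t}$ with $\rho = -\log(1-\alpha)/t_0$, which gives the stated bound with $C = 1/(1-\alpha)$. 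The entire argument is self-contained once the Hahn decomposition is applied, and no further structure of $\Omega$ beyond being a measurable space is used.
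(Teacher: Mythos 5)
The paper itself gives no proof of this theorem: it simply cites \cite{MT93}, \cite{HM11}, \cite{H16}, \cite{S13}. Your argument is the standard one found in those references --- Hahn--Jordan decomposition, cancellation of the common Doeblin minorant $\alpha\nu$ to obtain a strict contraction on $\mathcal{S}_{t_0}$, Banach fixed-point in the complete metric space of probability measures under total variation, commutativity to upgrade $\mathcal{S}_{t_0}$-invariance to invariance under the full semigroup, and finally the interpolation $t = nt_0 + s$ using nonexpansiveness of Markov semigroups in total variation. All these steps are correct, and the constants $C = 1/(1-\alpha)$ and $\rho = -\log(1-\alpha)/t_0$ come out exactly as stated.

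One point worth making explicit: your claim that the Hahn--Jordan components $\eta_+$ and $\eta_-$ have equal total mass $m = \tfrac12\|\mu_1-\mu_2\|_{\mathrm{TV}}$ is only valid when $\mu_1(\Omega) = \mu_2(\Omega)$. As stated, the theorem quantifies over all pairs of finite measures, but the contraction inequality cannot hold for measures of unequal total mass (take $\mu_2 = 0$: since $\mathcal{S}_{t_0}$ is mass-preserving and positivity-preserving, $\|\mathcal{S}_{t_0}\mu_1\|_{\mathrm{TV}} = \|\mu_1\|_{\mathrm{TV}}$). So the statement is slightly loose and your proof is in fact establishing the corrected version, restricted to equal masses --- which is exactly what the rest of the theorem, and the application in the paper, actually requires. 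It would be cleaner to flag this restriction rather than assert it silently; otherwise the argument is complete and well-organised.
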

	
	\
	
	Harris's theorem is an extension of Doeblin's theorem to situations in
	which one cannot prove a uniform minorisation condition as in
	Hypothesis \ref{hyp:Doeblin}. This is often the case when the state
	space is unbounded. Instead, we use Doeblin's condition only in a
	given region, and then show that the stochastic process will return to
	that region often enough. This is established by finding a so-called
	Lyapunov, or Foster-Lyapunov function. Both conditions then imply the
	existence of a spectral gap in a weighted total variation
	norm. Precisely, we need the following two hypotheses to be satisfied:
	
	\begin{hyp}[Foster-Lyapunov condition]
		\label{hyp:Lyapunov}
		There exist $\gamma \in (0,1)$, $K \geq 0$, some time $t_0 >0$
		and a measurable function $V : [0, + \infty) \mapsto [1, + \infty)$
		such that
		\begin{equation} \label{eq:hyp2}
			(\mathcal{S}_{t_0}^* V)(x) \leq \gamma V(x) + K, 
		\end{equation}
		for all $x$.
	\end{hyp}
	
	\begin{rem}
		When our continuous continuous-time Markov process is obtained by
		solving a particular PDE we often denote
		\begin{equation*}
			(\mathcal{S}_t m_0)(x) \equiv m(t,x),
		\end{equation*}
		where $m$ is the solution to the PDE with initial condition
		$m_0$. Then the previous condition is equivalent to
		\begin{equation*} \label{hyp:conf1}
			\int_\Omega m(t_0,x )V (x) \d x  \leq \gamma  \int_\Omega m_0(x) V(x) \d x + K ,
		\end{equation*}
		to be satisfied for all $m_0 \in \mathcal{P} (\Omega)$. One can
		verify this by proving the inequality
		\begin{equation*} \label{hyp:conf2}
			\frac{\d}{\d t} \int_\Omega m(t,x) V (x) \d x
			\leq -\lambda  \int_{0}^{+\infty} m(t,x) V(x) \d x + D
		\end{equation*}
		for some positive constants $D$ and $\lambda$, which then implies
		\eqref{eq:hyp2} with $\gamma = e^{-\lambda t_0}$ and
		$K = D/\lambda$.
	\end{rem}
	
	\
	
	The next hypothesis is a minorisation condition like Hypothesis
	\ref{hyp:Doeblin}, but only on a sufficiently large region:
	
	\begin{hyp}[Small set condition]
		\label{hyp:localDoeblin}
		There exist a probability measure $\nu$, a constant
		$\alpha \in (0,1)$ and some time $t_0 >0$ such that
		\[ \mathcal{S}_{t_0} \delta_{x_0} \geq \alpha \nu, \] for all
		$x_0 \in \mathcal{C}$, where
		\[ \mathcal{C} = \left \{ x : V(x) \leq R \right \} \] for some
		$R > 2K / (1-\gamma) $ where $K, \gamma$ are as in \emph{Hypothesis
			\ref{hyp:Lyapunov}}.
	\end{hyp}
	
	Finally we state \emph{Harris's theorem} under these hypotheses:
	
	\begin{thm}[Harris's Theorem] \label{thm:Harris}
		If we have a Markov semigroup $(\mathcal{S}_t)_{t \geq 0}$ satisfying Hypotheses \ref{hyp:Lyapunov} and \ref{hyp:localDoeblin} then there exist $\beta >0$ and $\bar{\alpha} \in (0,1)$ such that 
		\begin{equation*}
			\left\| \mathcal{S}_{t_0} \mu_1 - \mathcal{S}_{t_0} \mu_2\right\|_{V,\beta} \leq \bar{\alpha} \left\| \mu_1 - \mu_2 \right\|_{V,\beta}.
		\end{equation*} for all nonnegative measure $\int \mu_1 = \int \mu_2$, where the norm $\| \cdot \|_{V,\beta}$ is defined by
		\[\left\| \mu_1 - \mu_2 \right\|_{V,\beta} := \int (1+\beta V(x)) |\mu_1 - \mu_2 |\d x.\]
		Moreover, the semigroup has a unique invariant probability measure $\mu_*$ and there exist $C>0$ and $\rho >0$ (depending only on $t_0, \alpha, \gamma, K, R$ and $\beta$) such that 
		\begin{equation*}
			\left\|\mathcal{S}_t (\mu -\mu_*)\right\|_{V,\beta} \leq C e^{-\rho t} \left\|\mu -\mu_*\right\|_{V,\beta} \text{  for all } t \geq 0.
		\end{equation*} 
		Explicitly if we set $ \gamma_0 \in \left[\gamma + 2K /R, 1\right)$ for any $\alpha_0 \in (0, \alpha)$ we can chose $ \beta = \alpha_0/K$ and 
		$\bar{ \alpha } = \max \left \{ 1-\alpha + \alpha_0, (2+ R\beta \gamma_0) / (2+ R\beta) \right \}$. Then we have $C = 1/ \bar{\alpha}$ and $\rho = -(\log \bar{\alpha}) / t_0$.
	\end{thm}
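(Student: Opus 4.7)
The plan is to establish a one-step contraction $\|\mathcal{S}_{t_0}(\mu_1-\mu_2)\|_{V,\beta}\leq\bar\alpha\,\|\mu_1-\mu_2\|_{V,\beta}$ with $\bar\alpha<1$ in the Hairer--Mattingly style, then iterate. By the Jordan decomposition $\mu_1-\mu_2=\eta_+-\eta_-$, the weighted variation norm $\|\cdot\|_{V,\beta}$ depends only on the mutually singular measures $\eta_\pm$, and equality of total masses forces $\eta_\pm(\Omega)=a$ for some $a\geq 0$. So we may assume from the outset that $\mu_1\perp\mu_2$ have common mass $a$, in which case $\|\mu_1-\mu_2\|_{V,\beta}=2a+\beta\Phi$ with $\Phi:=\int V\d(\mu_1+\mu_2)$. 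Note that $V\geq 1$ forces $\Phi\geq 2a$.

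\textbf{Two core estimates.} Summing Hypothesis~\ref{hyp:Lyapunov} applied to $\mu_1$ and $\mu_2$ gives the Lyapunov bound
\[
\int V\d\mathcal{S}_{t_0}(\mu_1+\mu_2)\leq\gamma\Phi+2Ka.
\]
Integrating Hypothesis~\ref{hyp:localDoeblin} against each $\mu_i$ yields $\mathcal{S}_{t_0}\mu_i\geq\alpha\mu_i(\mathcal{C})\nu$. Setting $s:=\alpha\min(\mu_1(\mathcal{C}),\mu_2(\mathcal{C}))$, the measures $\mathcal{S}_{t_0}\mu_i-s\nu$ are both nonnegative with total mass $a-s$, so $|\mathcal{S}_{t_0}(\mu_1-\mu_2)|\leq\mathcal{S}_{t_0}(\mu_1+\mu_2)-2s\nu$, whence the master estimate
\[
\|\mathcal{S}_{t_0}(\mu_1-\mu_2)\|_{V,\beta}\leq 2(a-s)+\beta(\gamma\Phi+2Ka).
\]

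\textbf{Dichotomy and choice of $\beta,\gamma_0$.} Markov's inequality gives $(\mu_1+\mu_2)(\Omega\setminus\mathcal{C})\leq\Phi/R$, hence $\mu_i(\mathcal{C})\geq a-\Phi/R$ and $s\geq\alpha(a-\Phi/R)_+$. I split on whether $\Phi\geq Ra$ or $\Phi<Ra$. In the \emph{Lyapunov regime} $\Phi\geq Ra$, discarding the minorisation term and using $2a\leq 2\Phi/R$ in the master estimate, combined with the choice $\beta=\alpha_0/K$ and $\gamma_0:=\gamma+2K/R$, yields after a short rearrangement
\[
\|\mathcal{S}_{t_0}(\mu_1-\mu_2)\|_{V,\beta}\leq\frac{2+R\beta\gamma_0}{2+R\beta}(2a+\beta\Phi),
\]
and the small-set hypothesis $R>2K/(1-\gamma)$ ensures $\gamma_0<1$. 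In the \emph{minorisation regime} $\Phi<Ra$, plugging $s\geq\alpha(a-\Phi/R)$ and $\beta=\alpha_0/K$ into the master estimate produces a bound whose $2a$-coefficient is exactly $1-\alpha+\alpha_0$ with a residual $\Phi$-term of lower order. The ratio to $2a+\beta\Phi$ is a linear-fractional (hence monotone) function of $\Phi/a\in[2,R]$; at the endpoint $\Phi=Ra$ it recovers the Lyapunov-regime factor, and at $\Phi=2a$ it is controlled by $1-\alpha+\alpha_0$ for appropriate $\alpha_0$. Taking $\bar\alpha:=\max\{1-\alpha+\alpha_0,(2+R\beta\gamma_0)/(2+R\beta)\}$ then delivers the one-step contraction.

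\textbf{Iteration, invariant measure, main difficulty.} Iterating $n$ times gives $\|\mathcal{S}_{nt_0}(\mu_1-\mu_2)\|_{V,\beta}\leq\bar\alpha^n\|\mu_1-\mu_2\|_{V,\beta}$; for general $t$ one writes $t=nt_0+r$ with $r\in[0,t_0)$ and absorbs $\mathcal{S}_r$ via the Lyapunov inequality over $[0,t_0]$, yielding the stated $C=1/\bar\alpha$ and $\rho=-(\log\bar\alpha)/t_0$. For existence of $\mu_*$, the sequence $(\mathcal{S}_{nt_0}\mu_0)_{n\geq 0}$, for any probability $\mu_0$ with finite $V$-moment, is Cauchy in the complete metric space of probability measures with finite $V$-moment endowed with $\|\cdot\|_{V,\beta}$, hence converges to a fixed point of $\mathcal{S}_{t_0}$, which the semigroup property makes invariant under all $\mathcal{S}_t$; uniqueness is immediate from contraction since any two invariants $\mu_*^1,\mu_*^2$ satisfy $\|\mu_*^1-\mu_*^2\|_{V,\beta}\leq\bar\alpha\|\mu_*^1-\mu_*^2\|_{V,\beta}$. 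The most delicate step is the minorisation regime: tracking the residual $\Phi$-term arising from $s\geq\alpha(a-\Phi/R)$ and verifying that the pinching $\Phi/a\in[2,R]$ makes it absorbable into the maximum defining $\bar\alpha$, which is precisely what dictates the coupled choices $\beta=\alpha_0/K$ and $\gamma_0\in[\gamma+2K/R,1)$.
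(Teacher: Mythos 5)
The paper does not itself prove Theorem~\ref{thm:Harris}; it simply cites~\cite{HM11,H16,MT93,S13}, so there is no in-paper proof to compare against. Your argument is the Hairer--Mattingly approach of~\cite{HM11}, and the essential structure is correct: the Jordan reduction to mutually singular $\mu_1,\mu_2$ of common mass $a$, the combination of the Lyapunov bound with the minorisation into the master estimate $\|\mathcal S_{t_0}(\mu_1-\mu_2)\|_{V,\beta}\leq 2(a-s)+\beta(\gamma\Phi+2Ka)$ (discarding the favourable term $-2s\beta\int V\,\mathrm d\nu$), and the dichotomy on $\Phi/a$ via Markov's inequality.

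Two places should be tightened. In the minorisation regime, the phrase ``at $\Phi=2a$ it is controlled by $1-\alpha+\alpha_0$ for appropriate $\alpha_0$'' hides the real mechanism. Writing $g(\phi)=\dfrac{2(1-\alpha+\alpha_0)+\phi(2\alpha/R+\beta\gamma)}{2+\beta\phi}$, the inequality $g(2)\leq 1-\alpha+\alpha_0$ is \emph{algebraically equivalent} to $g$ being nonincreasing (both reduce to $2(2\alpha/R+\beta\gamma)\leq 2\beta(1-\alpha+\alpha_0)$). So either $g$ is nonincreasing, in which case $\sup_{[2,R]}g=g(2)\leq 1-\alpha+\alpha_0$, or $g$ is increasing and $\sup_{[2,R]}g=g(R)=(2+R\beta\gamma_0)/(2+R\beta)$, which you computed. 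This makes the bound $\max\{g(2),g(R)\}\le\bar\alpha$ automatic for \emph{every} $\alpha_0\in(0,\alpha)$, not an ``appropriate'' one, matching the theorem's phrasing.

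Second, the passage from $t=nt_0$ to arbitrary $t$ with the explicit constants $C=1/\bar\alpha$, $\rho=-(\log\bar\alpha)/t_0$ needs a bound on $\|\mathcal S_r(\mu-\mu_*)\|_{V,\beta}$ for $r\in[0,t_0)$; Hypothesis~\ref{hyp:Lyapunov} as stated fixes only the single time $t_0$, so ``the Lyapunov inequality over $[0,t_0]$'' is not literally available. Likewise, to promote the fixed point of $\mathcal S_{t_0}$ to an invariant measure of the whole semigroup one must check that $\mathcal S_t\mu_*$ keeps a finite $V$-moment for $0\le t<t_0$. In the paper's actual application these are harmless because the Lyapunov estimate is derived from a differential inequality (see the remark following Hypothesis~\ref{hyp:Lyapunov}) and therefore holds at all times, but your sketch appeals to it as though it were part of the hypotheses. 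Neither issue touches the one-step contraction, which is the heart of the theorem and which you established correctly.
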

	Proofs of Theorems \ref{thm:Doeblin} and \ref{thm:Harris} can be found
	for example in \cite{ H16, HM11, MT93, S13}.

	\section{Foster-Lyapunov condition}
	
	\label{sec:lyapunov}
	
	In this section we prove that Hypothesis \ref{hyp:Lyapunov} is
	verified for the semigroup generated by rescaled
	growth-fragmentation equation \eqref{eq:gfscaledgeneral}, when we
	consider the evolution of $f(t,x) := \phi(x) m(t,x)$.
	We divide the proof of Hypothesis \ref{hyp:Lyapunov} into
	three cases which require slightly different calculations.
	
	\subsection{Linear growth rate}
	\label{subseq:lingr}
	
	First we treat the linear growth case $g(x)=x$ with a constant
	fragmentation kernel. (As remarked before, we do not consider the
	mitosis kernel when $g(x)=x$ since there is no spectral gap in that
	case). In this case the Perron eigenvalue and the corresponding dual
	eigenfunction are known ($\lambda =1$ and $\phi(x)=x$), and the rescaled
	growth-fragmentation equation is given by
	\begin{equation}
		\label{g=xp=2}
		\frac{\p }{\p t}m(t,x) + \frac{\p }{\p x} (x m(t,x))
		= 2\int_{x}^{+\infty}  \frac{B(y)}{y} m(t,y) \d y - (B(x) +1) m(t,x),
	\end{equation}
	coupled with the usual initial and boundary conditions.
	
	\begin{lem}
		\label{lyapunovBlin}
		We consider Equation \eqref{g=xp=2} under Hypotheses \ref{asmp:k1}, \ref{asmp:gB} with a growth rate $g(x) = x$ and the constant fragment distribution $p(z) = 2$ for $z \in (0,1]$. Then the following holds true for any $K>1>k>-1$, for some $C_1, \bar{C}>0$, and any nonnegative measure solution $m = m(t,x)$:
		\begin{align}
			\label{eq:lyapunovBlin}
			\int_{0}^{+\infty} V(x) f(t,x) \d x   \leq e^{-C_1t} \int_{0}^{+\infty} V(x) f_0(x) \d x + \bar{C } \int_{0}^{+\infty} f_0(x) \d x
		\end{align}
		for all $t \geq 0$, where $f(t,x):=xm(t,x)$, $f_0(x) = x m_0(x)$, $\|f_0\|_V < +\infty$ and $V(x) = x^{k-1} + x^{K-1}$.
	\end{lem}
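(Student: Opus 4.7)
The plan is to reduce the claim to a differential inequality on the functional
$L(t):=\int_0^{+\infty}V(x)f(t,x)\d x=M_k(t)+M_K(t)$, where
$M_a(t):=\int_0^{+\infty}x^a m(t,x)\d x$. Multiplying~\eqref{g=xp=2} by $x^a$, integrating on $(0,+\infty)$, integrating by parts in the transport term, and swapping the order of integration in the fragmentation term (using $p(z)=2$ and $\int_0^y x^a\d x=y^{a+1}/(a+1)$) gives the classical moment identity
\[
  M_a'(t)=(a-1)M_a(t)+\frac{1-a}{a+1}J_a(t),\qquad
  J_a(t):=\int_0^{+\infty}x^a B(x)m(t,x)\d x.
\]
The case $a=1$ yields $M_1'(t)\equiv 0$, so $M_1(t)=\int f(t,x)\d x=\int f_0\d x$ is conserved. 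Summing the identities for $a=k$ and $a=K$ gives
\[
  L'(t)=(k-1)M_k+(K-1)M_K+\frac{1-k}{k+1}J_k-\frac{K-1}{K+1}J_K.
\]

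The high-moment pair $(K-1)M_K-\frac{K-1}{K+1}J_K$ is made dissipative by using that $B(x)\to+\infty$ as $x\to+\infty$ (a consequence of $xB/g=B\to+\infty$ under Hypothesis~\ref{asmp:gB} with $g(x)=x$): for arbitrary $M_0>0$ one picks $R_0>0$ with $B\geq M_0$ on $[R_0,+\infty)$, so that $J_K\geq M_0M_K-M_0R_0^{K-1}M_1$, using $\int_0^{R_0}x^Km\d x\leq R_0^{K-1}M_1$. Taking $M_0$ large then extracts a genuine decay term $-c_K M_K$ modulo a conserved multiple of $M_1$.

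The core of the argument is the positive term $\frac{1-k}{k+1}J_k$, which I propose to handle by a three-part splitting $J_k=\int_0^\delta+\int_\delta^{R_0}+\int_{R_0}^{+\infty}x^k B m\d x$. Near zero, $B(x)\to 0$ (again from Hypothesis~\ref{asmp:gB} with $g(x)=x$), so for any $\varepsilon>0$ one can choose $\delta$ with $\sup_{[0,\delta]}B\leq\varepsilon$; the first piece is then bounded by $\varepsilon M_k$ and, for $\varepsilon$ small enough (say $\varepsilon<(k+1)/2$), absorbed by $(k-1)M_k=-(1-k)M_k$. On $[\delta,R_0]$ the rate $B$ is uniformly bounded and $x^k\leq\delta^{k-1}x$ (since $k<1$), so the middle piece is bounded by a constant multiple of $M_1$. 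For the tail, $x^k\leq R_0^{k-K}x^K$ (since $k<K$), so the last piece is $\leq R_0^{k-K}J_K$, and choosing $R_0$ large enough that $\frac{1-k}{k+1}R_0^{k-K}\leq\frac{K-1}{2(K+1)}$ leaves half of the dissipative $J_K$-contribution available for the high-moment step.

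Assembling these pieces in the natural order (first fix $R_0$ large enough for both the tail absorption of $J_k$ and the $M_K$-dissipation; then fix $\delta$ small enough for the near-zero absorption) yields a differential inequality $L'(t)\leq-C_1L(t)+DM_1$ with $C_1,D>0$. Since $M_1\equiv\int f_0\d x$, Gronwall's inequality gives~\eqref{eq:lyapunovBlin} with $\bar C=D/C_1$. The main obstacle lies in treating $\frac{1-k}{k+1}J_k$: unlike $M_K$, where the growth of $B$ at infinity provides dissipation directly, the low-moment term requires combining three distinct features of $B$ (vanishing at $0$, uniform boundedness on compacts, growth at infinity) together with the conservation of $\int f$, and one must verify that the parameters $R_0,\delta,\varepsilon$ can be fixed consistently.
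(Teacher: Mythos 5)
Your moment identity $M_a'=(a-1)M_a+\frac{1-a}{a+1}J_a$ and the resulting expression for $L'(t)$ are exactly what the paper computes (the paper writes the $(k-1)M_k$ and $(K-1)M_K$ terms split as $-\frac{1-k}{2}L(t)$ plus corrections $c_4 x^k m$, $c_2 x^K m$, but the algebra is identical). Where you diverge is in bounding the remainder. The paper's move is to observe that the whole correction is $\int\Phi(x)f(t,x)\d x$ with
\[
\Phi(x)=c_1B(x)x^{K-1}+c_2x^{K-1}+c_3B(x)x^{k-1}+c_4x^{k-1},\quad c_1,c_4<0<c_2,c_3,
\]
and that $\Phi$ is continuous on $(0,\infty)$ with $\Phi(x)\to-\infty$ both as $x\to0$ (because $c_4x^{k-1}$ dominates, $B\to0$) and as $x\to+\infty$ (because $c_1B(x)x^{K-1}$ dominates, $B\to+\infty$); hence $\Phi\leq C_2$ uniformly, giving $L'\leq-\frac{1-k}{2}L+C_2\int f_0$ in one line. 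Your argument instead splits $J_k$ into three regions and partitions the $J_K$ budget between the tail of $J_k$ and the $M_K$-dissipation; that is correct (the order ``fix $M_0$, then $R_0$, then $\delta$'' works, with no hidden circularity, and each inequality you quote checks out), but it reconstructs by hand the same three features of $B$ (vanishing near $0$, boundedness on compacts, divergence at $+\infty$) that the single qualitative observation ``$\Phi$ is bounded above'' encodes at once. The trade-off: your version is longer but makes the constant $\bar C$ tractable in terms of $R_0,\delta,\sup_{[\delta,R_0]}B$, which is closer in spirit to the paper's explicit spectral-gap computation at the end of Section~\ref{sec:proof-main}; the paper's version is shorter but leaves $C_2$ non-explicit at this stage. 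One cosmetic point: your final rate is $C_1=\min\{\frac{1-k}{2},K-1\}$ (or whatever dissipation coefficient you retain for $M_K$), whereas the paper gets $C_1=\frac{1-k}{2}$ directly since it does not need to sacrifice any of the $M_K$ coefficient; for the lemma's statement this is immaterial.
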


	\begin{proof}
		Let $\varphi:\R\to[0,1]$ be a non-increasing $C^1$ function such that $\varphi(x)=1$ for $x\leq0$ and $\varphi(x)=0$ for $x\geq1$.
			For $\ell>0$ we define $\varphi_\ell(x)=\varphi(x-\ell)$.
			Starting from~\eqref{eq:gf_def} we have
			\begin{align*}
				\frac{\d}{\d t} &\int_{0}^{+\infty} \big(x^k + x^K\big) \varphi_\ell(x)m(t,x) \d x \\
				&= \int_{0}^{+\infty}\left(\big(kx^{k-1}+Kx^{K-1}\big)\varphi_\ell(x)+\big(x^k+x^K\big)\varphi'_\ell(x)\right)xm(t,x)\d x\\
				&\qquad+2\int_{0}^{+\infty}\frac{B(x)}{x}m(t,x)\int_0^x\big(y^k+y^K\big)\varphi_\ell(y)\d y\d x\\
				&\qquad\qquad-\int_{0}^{+\infty}\big(1+B(x)\big)\big(x^k+x^K\big)\varphi_\ell(x)m(t,x)\d x.
			\end{align*}
			Since $\varphi_\ell$ is non-increasing we get
			\begin{align*}
				\frac{\d}{\d t} &\int_{0}^{+\infty} \big(x^k + x^K\big) \varphi_\ell(x)m(t,x) \d x \\
				&\leq\int_{0}^{+\infty}\big(kx^{k-1}+Kx^{K-1}\big)\varphi_\ell(x)xm(t,x)\d x\\
				&\qquad+2\int_{0}^{+\infty}B(x)\left(\frac{x^{k-1}}{k+1}+\frac{x^{K-1}}{K+1}\right)\varphi_\ell(x)xm(t,x)\d x\\
				&\qquad\qquad-\int_{0}^{+\infty}\big(1+B(x)\big)\big(x^{k-1}+x^{K-1}\big)\varphi_\ell(x)xm(t,x)\d x\\
				&\leq  -\frac{1}{2}(1-k)\int_{0}^{+\infty}(x^{k-1} + x^{K-1})\varphi_\ell(x)  xm(t,x) \d x \\
				&\quad+ \int_{0}^{+\infty}  \left( c_1 B(x) x^{K-1}+ c_2x^{K-1} + c_3 B(x) x^{k-1} + c_4x^{k-1} \right)\varphi_\ell(x)  xm(t,x) \d x
		\end{align*}
		where 
		\[ -1< c_1  : = \frac{1-K}{1+K}< 0, \, \,\, c_2:=K - \frac{k+1}{2}>0, \, \,\,  c_3:= \frac{1-k}{1+k} > 0, \,\,\, c_4:=\frac{k-1}{2}  <0 . \]
		We define 
		\begin{equation}\label{Phi_lin}
			\Phi(x) : = c_1 B(x)x^{K-1} + c_2x^{K-1} +  c_3 B(x)x^{k-1}+ c_4 x^{k-1}.
		\end{equation}
		Due to Hypothesis \ref{asmp:gB}, the total fragmentation rate
		$B \: [0, + \infty) \to [0, + \infty) $ satisfies
		$B(x)  \to 0$ as $x \to 0$ and
		$B(x) \to +\infty$ as $x \to +\infty$. Hence in
		the latter expression the behaviour as $x \to +\infty$ is dominated
		by the first term; thus $\Phi(x)$ will approach $-\infty$.
		Similarly when $x \to 0$, the last term will dominate the behaviour
		of $\Phi$, which is negative as well. Since $B$ is continuous we can
		always bound ${\sup}_{x \geq 0} \, \Phi(x) \leq C_2 $ with
		some positive quantity $C_2 >0.$ Therefore by denoting
		$f(t,x) = xm(t,x)$ and $f_0(x)=x m_0(x)$ we obtain, since $\varphi_\ell\leq1$ and $\int f(t,x)\d x=\int f_0(x)\d x$,
			\begin{align*}
				\frac{\d}{\d t} \int_{0}^{+\infty}& (x^{k-1}+x^{K-1}) \varphi_\ell(x)f(t,x) \d x  \\
				& \leq -C_1 \int_{0}^{+\infty} (x^{k-1}+x^{K-1})\varphi_\ell(x) f(t,x) \d x + C_2  \int_{0}^{+\infty} f_0(x) \d x,
		\end{align*}
		where $C_1 = (1-k) /2 >0$. Then Grönwall's lemma implies
			\begin{align*}
				\int_{0}^{+\infty} V(x)\varphi_\ell(x) f(t,x) \d x &  \leq e^{-C_1t} \int_{0}^{+\infty} V(x)\varphi_\ell(x) f_0(x) \d x + \bar{C } \int_{0}^{+\infty} f_0(x) \d x
			\end{align*}
			with $\bar{C} = C_2/C_1$.
			Due to the monotone convergence theorem we deduce \eqref{eq:lyapunovBlin} by letting $\ell$ go to $+\infty$.
	\end{proof}
	
	\subsection{Sublinear growth rate close to $0$}
	\label{subseq:sublingr}
	
	In this section we assume that $\int_0^1 \frac 1g<+\infty$, which we
	sometimes refer to as the case of \emph{sublinear growth rate at
		$x=0$}.
	
	\begin{lem}
		\label{lem:lyapunov1} 
		We consider Equation
		\eqref{eq:gfscaledgeneral} under Hypotheses \ref{asmp:k1},
		\ref{asmp:gB}, and $\int_0^1 \frac 1g<+\infty$. We take $K>1+\xi$.
		Then the following holds true for $C_1 = \lambda$ (the first
		eigenvalue), some $C_2 > 0$, and any nonnegative measure solution $m = m(t,x)$:
		\begin{equation}  \label{lya1}
			\frac{\d}{\d t} \int_{0}^{+\infty} x^K m(t,x) \d x
			\leq -C_1 \int_{0}^{+\infty} x^K m(t,x) \d x
			+ C_2 \int_{0}^{+\infty}  \phi(x)m(t,x) \d x,
		\end{equation}
		for all $ t \geq 0$.
	\end{lem}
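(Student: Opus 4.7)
The plan is to test the rescaled equation \eqref{eq:gfscaledgeneral} against the weight $x^K$, compute the resulting drift explicitly, and then absorb its positive part into a multiple of $\int \phi\,m$. Starting from
\[
\frac{\d}{\d t}\int_0^{+\infty}\! x^K m\,\d x
= -\!\int_0^{+\infty}\! x^K\partial_x(gm)\,\d x
-\!\int_0^{+\infty}\! x^K(B+\lambda)m\,\d x
+\!\int_0^{+\infty}\! x^K\!\!\int_x^{+\infty}\!\kappa(y,x)m\,\d y\,\d x,
\]
I integrate by parts in the transport term (boundary terms vanish: at $0$ because $x^K g(x)=\mathcal O(x^{K-\xi})$ with $K-\xi>1$, and at $+\infty$ by the integrability the weight provides), and apply Fubini to the fragmentation integral. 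Using the self-similar form $\kappa(y,x)=(B(y)/y)p(x/y)$ and the change of variables $z=x/y$, the inner integral is $B(y)y^K p_K$. Collecting everything,
\[
\frac{\d}{\d t}\int_0^{+\infty}\! x^K m\,\d x
= -\lambda\int_0^{+\infty}\! x^K m\,\d x
+ \int_0^{+\infty}\! x^K\Psi(x)\,m(t,x)\,\d x,
\]
where $\Psi(x):=K\,g(x)/x-(1-p_K)B(x)$.

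Next I analyse $\Psi$. By Hypothesis~\ref{asmp:p} (and the monotonicity recalled in Remark~\ref{rk:kernel}) the map $K\mapsto p_K$ is strictly decreasing, so $p_K<p_1=1$ and the coefficient in front of $B(x)$ is strictly negative. Near $x=0$, we have $x^K\cdot K\,g(x)/x=\mathcal O(x^{K-1-\xi})\to 0$ since $K>1+\xi$, while $B$ is bounded on $[0,1]$ so $(1-p_K)B(x)x^K\to 0$ as well; hence $x^K\Psi(x)\to 0$. At infinity, $g(x)=\mathcal O(x)$ keeps $K\,g(x)/x$ bounded, whereas $xB(x)/g(x)\to +\infty$ from Hypothesis~\ref{asmp:gB} forces $(1-p_K)B(x)$ to dominate, so $x^K\Psi(x)\to -\infty$. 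Therefore the positive part $[x^K\Psi]_+$ is bounded on $[0,+\infty)$ and compactly supported in some $[a,b]\subset(0,+\infty)$.

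Finally, because $\int_0^1 1/g<+\infty$, Theorem~\ref{thm:dualeigenfunction} gives a continuous dual eigenfunction $\phi$ with $\phi(0)>0$ and $\phi(x)>0$ for all $x\geq 0$; in particular $\phi$ is bounded below by some $c>0$ on $[a,b]$. Setting $C_2:=c^{-1}\|[x^K\Psi]_+\|_{L^\infty}$ yields $[x^K\Psi(x)]_+\leq C_2\,\phi(x)$ for every $x\geq 0$, and consequently
\[
\int_0^{+\infty} x^K\Psi(x)\,m(t,x)\,\d x\leq C_2\int_0^{+\infty}\phi(x)\,m(t,x)\,\d x,
\]
which combined with the identity above proves \eqref{lya1} with $C_1=\lambda$. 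The only delicate point is justifying the formal manipulations for measure solutions; this is standard and can be done by regularising $x^K$ (cutting off near $0$ and $+\infty$) and passing to the limit using the semigroup theory recalled at the beginning of the section, or equivalently by working on the dual side through $\mathcal S_t^*$.
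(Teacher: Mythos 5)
Your proof is correct and follows essentially the same approach as the paper: integrate against the weight $x^K$, compute the drift term $\int x^K\Psi(x)m\,\d x$ (your $x^K\Psi$ equals the paper's $\Phi$), and control its positive part by a multiple of $\phi$ using that $\phi(0)>0$ when $\int_0^1 1/g<+\infty$. One small imprecision: the positive part of $x^K\Psi$ need not be supported in a compact $[a,b]$ with $a>0$, since $g(x)/x$ may blow up near $0$ so that $x^K\Psi>0$ on a right neighbourhood of $0$; this is harmless because $\phi$ is continuous with $\phi(0)>0$ and hence bounded below on all of $[0,b]$, so your choice of $C_2$ still works.
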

	
	\begin{proof}
		For the sake of conciseness and clarity, we skip the truncation procedure here. But the same method as for Lemma~\ref{lyapunovBlin} can be used to make the calculations rigorous by using the truncation function $\varphi_\ell$.
		We have
		\begin{align*}
			\frac{\d}{\d t} &\int_{0}^{+\infty} x^K m(t,x) \d x
			\\&= -\int_{0}^{+\infty}  x^K\frac{\p}{\p x} \left(g(x) m(t,x) \right) \d x - \int_{0}^{+\infty} x^K  (B(x) +\lambda) m(t,x) \d x
			\\&+\int_{0}^{+\infty} x^K\int_{x}^{+\infty} \frac{B(y)}{y} p\left(\frac{x}{y}\right) m(t,y) \d y \d x
			\\&=  -\lambda \int_{0}^{+\infty}  x^K m(t,x) \d x + \int_{0}^{+\infty} \left( (p_K-1)x^{K} B(x) + K x^{K -1}g(x) \right) m(t,x) \d x.
		\end{align*}
		We define \[\Phi(x):= (p_K-1)x^{K} B(x) + K x^{K -1}g(x)\] and
		notice that ${\sup}_{x \geq 0} \, \Phi(x) \leq C_2\phi(x)$
		for some $C_2 >0$ due to Hypothesis \ref{asmp:gB} concerning the
		behaviour of $xB(x)/ g(x)$ as $x \to +\infty$ and
		$x \to 0$, and the fact that $\phi(0)>0$ since
		$\int_0^1 \frac{1}{g}<+\infty$ which is a result of Theorem
		\ref{thm:dualeigenfunction}.
	\end{proof}
	
	We now give a translation of this lemma in terms of $f = \phi m$, since this is needed in order to apply Harris's theorem to the evolution of $f$:
	
	\begin{cor}
		\label{cor:lyapunov1/gint}
		We consider Equation
		\eqref{eq:gfscaledgeneral} under Hypotheses \ref{asmp:k1},
		\ref{asmp:gB}, and $\int_0^1 \frac 1g<+\infty$. For
		$V(x) = 1+ \frac{x^K}{\phi(x)}$ where $K > 1 + \xi$ and
		$f(t,x):=\phi(x)m(t,x)$ with $ f_0(x)=\phi(x) m_0(x), \|f_0\|_V < +\infty$,  there exist $C_1, \tilde{C} >0$ such that
		for all $t\geq0$
		\begin{equation}
			\label{eq:lyapunov1}
			\int_{0}^{+\infty} V(x) f(t,x) \d x   \leq e^{-C_1t} \int_{0}^{+\infty} V(x) f_0(x) \d x + \tilde{C} \int_{0}^{+\infty} f_0(x) \d x.
		\end{equation}
	\end{cor}
	
	\begin{proof} By adding $\phi(x)$ of both sides of \eqref{lya1} we obtain
		\begin{align*}
			\frac{\d}{\d t} &\int_{0}^{+\infty} x^K m(t,x) \d x = \frac{\d}{\d t} \int_{0}^{+\infty} (x^K+ \phi(x)) m(t,x) \d x \\
			&\leq -C_1 \int_{0}^{+\infty} (x^K+\phi(x)) m(t,x) \d x
			+ (C_1+C_2) \int_{0}^{+\infty} \phi(x) m(t,x) \d x.
		\end{align*}
		Therefore, we have for $f(t,x) = \phi(x) m(t,x)$;
		\begin{multline*} 
			\frac{\d}{\d t} \int_{0}^{+\infty} \left(1+\frac{x^K}{\phi(x)}\right) f(t,x) \d x 
			\\ \leq -C_1 \int_{0}^{+\infty} \left(1+\frac{x^K}{\phi(x)}\right) f(t,x) \d x + (C_1 + C_2) \int_{0}^{+\infty} f_0(x) \d x,
		\end{multline*}  since $\int f(t,x)dx=\int f_0(x)dx$.
		Grönwall's lemma then implies \eqref{eq:lyapunov1} with $\tilde{C} = 1+C_2/C_1$.
	\end{proof}

	\subsection{Superlinear growth rate close to $0$}
	\label{subseq:superlingr}
	
	Now we assume that $\int_0^1\frac 1g=+\infty$, which implies linear or superlinear behaviour for the growth rate $x$ close $0$. This, of course, includes the case $g(x)=x$ from Section \ref{subseq:lingr}, but the general result we obtain now is slightly more restrictive. In the case
	of exact linear growth, Lemma \ref{lyapunovBlin} is slightly more
	precise.
	\begin{lem}
		\label{lem:lyapunov2} 
		We consider Equation \eqref{eq:gfscaledgeneral} under Hypotheses
		\ref{asmp:k1}, \ref{asmp:gB}, and $\int_0^1\frac 1g=+\infty$. We
		take $k<0$ and $K>1 + \xi$. Then the following holds true for any
		nonnegative measure solution $m=m(t,x)$:
		\begin{multline*}
			\frac{\d}{\d t} \int_{0}^{+\infty} (x^k+x^K) m(t,x) \d x
			\leq -C_1 \int_{0}^{+\infty} (x^k+x^K) m(t,x) \d x
			+ C_2 \int_{0}^{+\infty}  \phi(x)m(t,x) \d x,
		\end{multline*}
		for all $t \geq 0$, where $C_1 = \lambda >0$ and $C_2>0$ is some
		constant independent of the solution $m$.
	\end{lem}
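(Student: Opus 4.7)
The plan is to proceed exactly as in Lemma~\ref{lem:lyapunov1}: test equation~\eqref{eq:gfscaledgeneral} against the weight $V(x)=x^k+x^K$, perform an integration by parts on the transport term and a Fubini exchange in the fragmentation gain, and reduce everything to a pointwise upper bound of the resulting coefficient function by a multiple of $\phi$. After the standard manipulations, using the identity $\int_0^y \tfrac{1}{y}\,p(x/y)\,x^\ell\,\d x = p_\ell\,y^\ell$, one arrives at
\begin{equation*}
\frac{\d}{\d t}\int_0^{+\infty}(x^k+x^K)\,m(t,x)\,\d x = -\lambda\int_0^{+\infty}(x^k+x^K)\,m(t,x)\,\d x + \int_0^{+\infty}\Phi(x)\,m(t,x)\,\d x,
\end{equation*}
where
\begin{equation*}
\Phi(x) := k\,g(x)\,x^{k-1} + K\,g(x)\,x^{K-1} + (p_k-1)\,B(x)\,x^k + (p_K-1)\,B(x)\,x^K.
\end{equation*}
Once the estimate $\Phi(x)\leq C_2\,\phi(x)$ on $(0,+\infty)$ is established for some $C_2>0$, the desired differential inequality follows with $C_1=\lambda$.

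For the pointwise comparison, the strict monotonicity of $\ell\mapsto p_\ell$ (Remark~\ref{rk:kernel}) combined with $k<0<1<K$ gives $p_K<1<p_k$, so the two $B$-terms contribute with opposite signs. As $x\to+\infty$, the limit $xB(x)/g(x)\to+\infty$ from~\eqref{eq:gB} ensures that the negative term $(p_K-1)B(x)x^K$ dominates the positive $K\,g(x)x^{K-1}$ (as well as the two $k$-terms, since $k<K$), sending $\Phi(x)$ to $-\infty$. As $x\to 0$, the limit $xB(x)/g(x)\to 0$ forces the positive term $(p_k-1)B(x)x^k$ to be negligible compared with the negative leading contribution $k\,g(x)x^{k-1}$, while the $K$-contributions are small thanks to $K>1+\xi$ and the continuity of $B$ at $0$. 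Consequently $\Phi$ is nonpositive outside of some compact interval $[a,b]\subset(0,+\infty)$.

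The main obstacle, compared with Lemma~\ref{lem:lyapunov1}, is that under $\int_0^1\!\frac{1}{g}=+\infty$ one has $\phi(0)=0$ by Theorem~\ref{thm:dualeigenfunction}, so the bound $\Phi\leq C_2\,\phi$ near the origin cannot be obtained from a positive lower bound on $\phi$. This is precisely what the sign analysis above rescues: because $\Phi\leq 0$ both near $0$ and near $+\infty$, the inequality $\Phi\leq C_2\,\phi$ is automatic there for any $C_2\geq 0$. On the compact interval $[a,b]$, continuity of $\Phi$ together with strict positivity and continuity of $\phi$ yield that $\Phi/\phi$ is bounded above. Setting $C_2 := \max\{0,\sup_{x>0}\Phi(x)/\phi(x)\} < +\infty$ then delivers the required pointwise inequality and finishes the proof.
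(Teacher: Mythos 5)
Your proof is correct and follows essentially the same route as the paper's: the same integration-by-parts/Fubini computation leading to the same $\Phi$, and then the pointwise bound $\Phi \leq C_2\phi$. Where the paper's proof is terse (merely citing Hypothesis~\ref{asmp:gB} and the signs $p_K-1<0$, $k<0$), you correctly identify and spell out the key point — that $\phi(0)=0$ forces $\Phi$ to be $\leq 0$ near the origin rather than merely bounded, which is secured precisely because the negative term $k\,g(x)x^{k-1}$ dominates both positive contributions as $x\to 0$ (since $K>k$ and $xB(x)/g(x)\to 0$).
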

	
	\begin{proof}
		Here again we skip the truncation procedure and refer to the proof of Lemma~\ref{lyapunovBlin} for the method which allows making the calculations rigorous.
		We have
		\begin{align*}
			&\frac{\d}{\d t} \int_{0}^{+\infty} (x^k+x^K) m(t,x) \d x
			\\&= -\int_{0}^{+\infty}  (x^k+x^K)\frac{\p}{\p x} \left(g(x) m(t,x) \right) \d x - \int_{0}^{+\infty} (x^k+x^K)  (B(x) +\lambda) m(t,x) \d x
			\\& \quad + \int_{0}^{+\infty}  \frac{B(y)}{y} m(t,y) \int_0^1 (y^kz^k+y^Kz^K) p\left(z\right) y \d z\d y
			\\&=  -\lambda \int_{0}^{+\infty} (x^k+x^K) m(t,x) \d x 
			\\&\quad+ \int_{0}^{+\infty} \left( (p_k-1)x^kB(x) + (p_K-1)x^{K} B(x) + k x^{k-1}g(x) + K x^{K -1}g(x) \right) m(t,x) \d x
		\end{align*}
		Similarly to previous proofs, we define
		\[\Phi(x):= (p_k-1)x^kB(x) + (p_K-1)x^{K} B(x) + k x^{k-1}g(x)
		+ K x^{K -1}g(x)\] and notice that
		${\sup}_{x > 0} \, \Phi(x) \leq C_2\phi(x)$ for some
		$C_2 >0$ due to Hypothesis \ref{asmp:gB} concerning the
		behaviour of $xB(x)/g(x)$ as $x \to +\infty$ and
		$x \to 0$, and the fact that $p_K-1<0$ and $k<0$.
	\end{proof}
	
	\begin{cor}
		\label{cor:lyapunov1/gnonint}
		We consider Equation \eqref{eq:gfscaledgeneral} under Hypotheses
		\ref{asmp:k1}, \ref{asmp:gB} and $\int_0^1\frac 1g=+\infty$. For $V(x) = \frac{x^k+x^K}{\phi(x)}$
		with $k<0$, $K > 1 + \xi$, and $f(t,x):=\phi(x)m(t,x)$ with $f_0(x)= \phi (x)m_0(x)$, $\|f_0\|_V < +\infty$, there exist
		$C_1, \tilde{C} >0$ such that for all $t\geq0$:
		\begin{equation}
			\label{eq:lyapunov2}
			\int_{0}^{+\infty} V(x) f(t,x) \d x   \leq e^{-C_1t} \int_{0}^{+\infty} V(x) f_0(x) \d x + \tilde{C} \int_{0}^{+\infty} f_0(x) \d x.
		\end{equation}
	\end{cor}
	
	\begin{proof}
		The inequality in Lemma \ref{lem:lyapunov2} yields, for
		$f(t,x) := \phi(x) m(t,x)$,
		\begin{multline*}
			\frac{\d}{\d t} \int_{0}^{+\infty} \frac{x^k+x^K}{\phi(x)} f(t,x) \d x 
			\leq -C_1 \int_{0}^{+\infty} \frac{x^k+x^K}{\phi(x)} f(t,x) \d x + (C_1 + C_2) \int_{0}^{+\infty} f_0(x) \d x,
		\end{multline*} since $\int f(t,x)dx=\int f_0(x)dx$. 
		
		Then Grönwall's
		lemma implies \eqref{eq:lyapunov2} with
		$\tilde{C} = 1+C_2/C_1$.
	\end{proof}
	
	\section{Minorisation condition}
	\label{sec:LowerBounds}
	
	In this section, we show that Hypothesis \ref{hyp:localDoeblin} is
	verified for the semigroup generated by rescaled
	growth-fragmentation equation \eqref{eq:gfscaledgeneral}. We give the
	proof in two parts where the uniform fragment distribution and the
	equal mitosis are considered separately.
	
	\medskip
	
	We start by recalling some known results on the solution of the
	transport part of Equation \eqref{eq:gfscaledgeneral}. Consider the equation
	\begin{equation} \label{eq:growth1}
		\begin{aligned}
			\frac{\p }{\p t}m(t,x) + \frac{\p }{\p x} (g(x) m(t,x)) &= -c(x) m(t,x), \qquad &&t,x > 0,\\
			m(t,0) &= 0, \qquad &&t > 0,\\
			m(0,x) &=  n_0(x), \qquad  &&x > 0,
		\end{aligned}
	\end{equation} which is the same as Equation \eqref{eq:gfscaledgeneral} without the positive part of the fragmentation operator. 
	We remark that Hypothesis \ref{asmp:gB} ensures that the characteristic ordinary differential equation
	\begin{align}
		\begin{split}
			\label{eq:char-ode}
			\ddt X_t (x_0) &= g(X_t(x_0)), \\
			X_0(x_0) &= x_0,
		\end{split}
	\end{align}
	has a unique solution, defined for $t \in [0,+\infty)$, for any
	initial condition $x_0 > 0$.  In fact, it is defined in some interval
	$(t_*(x_0), +\infty)$, for some $t_*(x_0) < 0$. The solution can be
	explicitly given in terms of $H^{-1}$, where
	\begin{equation*}
		H(x) := \int_1^x \frac{1}{g(y)} \d y, \qquad x \geq 0.
	\end{equation*}
	We notice that $H$ is strictly increasing with
	$H_0 := H(0) = \underset{x \to 0}{\lim} \, H(x) < 0$ and
	$\underset{x \to +\infty}{\lim} \, H(x) = +\infty$ (since $g$ grows
	sublinearly as $x \to +\infty$), so that it is invertible as a map
	from $(0,+\infty)$ to $(H_0, +\infty)$. (We allow $H_0 = -\infty$ if
	$1/g$ is not integrable close to $x=0$.) It can easily be checked that
	\begin{equation*}
		\label{eq:char-solution}
		X_t(x_0) = H^{-1} (t + H(x_0))
		\qquad \text{for $x_0 > 0$ and $t > H_0 - H(x_0)$}, 
	\end{equation*}
	so that that the maximal time interval where the solution of
	\eqref{eq:char-ode} is defined is precisely as $(H_0-H(x_0),
	+\infty)$. Since it will be convenient later, we define
	\begin{equation*}
		X_t(0) := \lim_{x_0 \to 0} X_t(x_0) =
		\begin{cases}
			0 &\qquad \text{if $H_0 = -\infty$,}
			\\
			H^{-1}(t + H_0) &\qquad \text{if $H_0 \in (-\infty, 0)$.}
		\end{cases}
	\end{equation*}
	This reflects the fact that the characteristics take a very long time
	to escape from $0$ when $1/g$ is not integrable close to $0$; while
	they escape in finite time if $1/g$ is integrable close to $0$.
	For each $t \geq 0$, we have thus defined the \emph{flow map}
	$X_t \: (0,+\infty) \to (X_t(0),+\infty)$, which is strictly
	increasing. For negative times, we may consider
	$X_{-t} \: (X_t(0),+\infty) \to (0,+\infty)$ (where $t >
	0$). Of course, $X_{-t} = (X_t)^{-1}$.
	
	If $n_0$ is a nonnegative measure, it is well known that the unique
	measure solution to Equation \eqref{eq:growth1} is given by
	\begin{equation}
		\label{eq:growth-sol-form1}
		\begin{aligned}
			m(t, x) &= X_t \# n_0(x)
			\exp{\left( -\int_0^t c(X_{-\tau}(x)) \d \tau \right)},
			\qquad &&t \geq 0,\ x > X_t(0),
			\\
			m(t,x) &= 0, \qquad &&t \geq 0,\ x \leq X_t(0),
		\end{aligned}
	\end{equation}
	where we abuse notation by evaluating the measures $m(t,\cdot)$ and
	$X_t \# n_0$ at a point $x > 0$. For a Borel measurable map
	$X \: (0,+\infty) \to (0,+\infty)$, the expression $X \# n_0$ denotes
	the \emph{transport}, or \emph{push forward}, of the measure $n_0$ by
	the map $X$, defined by duality through
	\begin{equation*}
		\int_0^\infty \varphi(x) X \# n_0(x) \d x
		:= \int_0^\infty \varphi(X(y)) n_0(y) \d y
	\end{equation*}
	for all continuous, compactly supported
	$\varphi \: (0,+\infty) \to \R$.  We use the notation $\mathcal{T}_t$
	for this flow map:
	\begin{equation} \label{T_t}
		\mathcal{T}_t n_0(x) := X_t \# n_0(x), \qquad \text{for all } t \geq 0,
	\end{equation}
	so $\mathcal{T}_t$ is the semigroup associated to transport equation
	\eqref{eq:growth1}.
	
	If additionally $n_0$ is a function
	and $X$ has a left inverse $X^{-1}\: (a,b) \to (0,+\infty)$, one has
	\begin{equation*}
		X \# n_0(x) =
		\begin{cases}
			n_0(X^{-1}(x))\, \Big|\ddx (X^{-1})(x))\Big|
			\quad & \text{if $x \in (a,b)$,}
			\\
			0
			\quad & \text{otherwise.}
		\end{cases}
	\end{equation*}
	Using this for the solution to \eqref{eq:growth1}, if $n_0$ is a
	function we may write $m$ in the equivalent form
	\begin{equation}
		\label{eq:growth-sol-form2}
		m(t, x) = n_0(X_{-t}(x)) \ddx X_{-t}(x)
		\exp{\left( -\int_0^t c(X_{-\tau}(x)) \d \tau \right)}
	\end{equation}
	when $t \geq 0 $ and $x > X_t(0)$, and $m(t,x) = 0$
	otherwise. Using that $Y_t(x) := \ddx X_t(x)$ satisfies
	$\ddt Y_t(x) = g'(X_t(x)) Y_t(x)$, we note for later that
	\begin{equation}
		\label{eq:growth-sol-form3}
		\ddx X_{-t}(x) = \exp{\left( -\int_0^t g'(X_{-\tau}(x))\d \tau \right)},
		\qquad t \geq 0,\ x > X_t(0).
	\end{equation}

	\subsection{Uniform fragment distribution}
	
	Let us consider the case of uniform fragment distribution $p(z)=2$,
	corresponding to the fragmentation kernel of the form $\kappa (x,y) = \frac{2}{x} B(x) \1_{\{0 \leq x \leq y\}}$. The growth-fragmentation equation in this case is widely studied and depending on some assumptions made on growth and total division rates,
	existence (in some cases exact values) of eigenelements are known. The
	rescaled growth-fragmentation equation in this case becomes
	\begin{equation}
		\label{eq:gfscaledp=2}
		\begin{aligned}
			\frac{\p }{\p t}m + \frac{\p }{\p x} (g(x) m) &= 2\int_{x}^{+\infty}  \frac{B(y)}{y} m(t,y) \d y
			- (B(x) + \lambda ) m,  \quad &&t,x \geq 0,
			\\ m(t,0) &= 0,  \quad &&t > 0,
			\\ m(0,x) &= n_0(x),  \quad &&x > 0,
		\end{aligned}
	\end{equation}
	where $m = m(t,x)$ whenever variables are not explicitly written. If
	we consider a linear growth $g(x) = g_0x$ and a power like total
	division $ B(x) = b_0x^{\gamma}$ with $\gamma >0$, and $g_0, b_0 >0$,
	the Perron eigenvalue and the corresponding dual eigenfunction are
	given by 
	\begin{align*}
		\lambda = g_0 \quad \text{ and } \quad \phi(x) = \frac{x}{\int y N(y)}.
	\end{align*}
	In this case, eigenelements can be computed explicitly (see for
	example \cite{DG09}):
	\begin{equation*}
		\lambda = g_0, \qquad 
		N(x) = \left(\frac{b_0}{\gamma g_0}\right)^{1/\gamma}
		\frac{\gamma}{\Gamma \big(\frac{1}{\gamma}\big)}
		\exp \left(- \frac{1}{\gamma}\frac{b_0}{g_0} x^\gamma\right),
		\qquad
		\phi(x) = \left(\frac{b_0}{\gamma g_0}\right)^{1/\gamma}
		\frac{ \Gamma \big(\frac{1}{\gamma}\big)}
		{\Gamma \big(\frac{2}{\gamma}\big)} x.
	\end{equation*}
	Moreover, in \cite{BCG13}, the authors give the asymptotics of the
	profile $N$ and accurate bounds on the dual eigenfunction $\phi$ in a
	more general form of the growth-fragmentation equation where growth
	and total division rates behave like a power law for large and small
	$x$.
	
	\begin{lem}[Lower bound for the uniform fragment distribution]
		\label{lem:doeblinuniformfrag}
		Assume Hypotheses \ref{asmp:k1} and \ref{asmp:gB} hold true with a constant
		distribution of fragments $p(z) = 2$ for $z \in (0,1]$. Let
		$(\mathcal{S}_t)_{t \geq 0}$ be the linear semigroup associated to
		Equation \eqref{eq:gfscaledp=2}.  For all $0 < \eta < \theta$ given,
		there exists $t_0 >0$ such that for all $t > t_0$ and
		$x_0 \in (\eta,\theta]$ it holds that
		\begin{equation*}
			\mathcal{S}_{t}  \delta_{x_0}(x) \geq C(\eta, \theta, t)\qquad \text{for all $x \in I_t$},
		\end{equation*}
		where $I_t$ is an open interval which depends on $\eta$,
		the time $t$, and for some quantity $C = C(\eta, \theta, t)$
		depending only on $\eta$, $\theta$ and $t$. If in addition we assume
		that
		\begin{equation*}
			\int_0^1 \frac{1}{g(x)} \d x < +\infty,
		\end{equation*}
		then the above result also holds when taking $\eta = 0$.
	\end{lem}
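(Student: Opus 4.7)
\bigskip\noindent\textbf{Proof plan for Lemma \ref{lem:doeblinuniformfrag}.}

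The plan is to use a Duhamel representation of $\mathcal{S}_t$ relative to the pure transport semigroup $\mathcal{T}_t$ introduced in~\eqref{T_t}, and keep only the mass produced by a single fragmentation event. Since $\mathcal{S}_t$ solves~\eqref{eq:gfscaledp=2} and $\mathcal{T}_t$ accounts for the growth and absorption $-c(x)m$, positivity gives
\[
\mathcal{S}_t \delta_{x_0}
\;\geq\;
\int_0^t \mathcal{T}_{t-s}\bigl[\mathcal{F}^{+}\mathcal{T}_s \delta_{x_0}\bigr]\,\d s,
\qquad
\mathcal{F}^{+}[\mu](x) := 2\int_x^\infty \frac{B(y)}{y}\mu(\d y).
\]
Using formulas~\eqref{eq:growth-sol-form1}--\eqref{eq:growth-sol-form3}, one has $\mathcal{T}_s\delta_{x_0} = \alpha_s(x_0)\,\delta_{X_s(x_0)}$ with $\alpha_s(x_0):=\exp(-\int_0^s c(X_\tau(x_0))\d\tau)$, and because $p\equiv 2$,
\[
\mathcal{F}^{+}\bigl[\alpha_s(x_0)\delta_{X_s(x_0)}\bigr](y)
= 2\,\alpha_s(x_0)\,\frac{B(X_s(x_0))}{X_s(x_0)}\,\mathbf{1}_{(0,X_s(x_0))}(y).
\]

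Next I would push this function forward by $\mathcal{T}_{t-s}$, using \eqref{eq:growth-sol-form2}--\eqref{eq:growth-sol-form3}: the result is supported on $x\in(X_{t-s}(0),X_t(x_0))$ and its value there equals
\[
2\,\alpha_s(x_0)\,\frac{B(X_s(x_0))}{X_s(x_0)}\,
\exp\!\Big(-\!\!\int_0^{t-s}\!\bigl(g'+c\bigr)(X_{-\tau}(x))\,\d\tau\Big).
\]
Integrating in $s$ produces a \emph{density} in $x$ rather than an atom, which is precisely what is needed for a minorisation. Having $x_0\in(\eta,\theta]$ gives, via Hypothesis~\ref{asmp:gB} and the monotonicity of the flow, two-sided bounds $\eta\leq x_0\leq X_s(x_0)\leq X_t(\theta)$ for all $s\in[0,t]$; since $g=\mathcal{O}(x)$, $X_t(\theta)$ stays finite, and since $B$, $g$ and $g'$ are continuous on $(0,\infty)$, one obtains uniform pointwise control of $\alpha_s(x_0)$ and of the exponential factor above on the region of interest.

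The delicate point is to obtain a strictly \emph{positive} lower bound for $B(X_s(x_0))$. Using~\eqref{eq:tB}, $B$ is bounded below by a positive constant on $[t_B,X_t(\theta)]$; since $x_0\geq\eta>0$, there exists $\tau_*(\eta)>0$ (namely $\tau_*=H(t_B)-H(\eta)$ when $H(\eta)<H(t_B)$, and $0$ otherwise) such that $X_s(x_0)\geq t_B$ for every $s\in[\tau_*,t]$. Taking $t_0>\tau_*$ and restricting the $s$-integration to $[\tau_*,t]$ then yields a uniform positive lower bound. Choosing the interval $I_t$ of the form $I_t=(X_{t-s_1}(0),X_t(\eta))$ for some fixed $s_1\in(\tau_*,t)$ (so that $x\in I_t$ implies $X_{t-s}(0)<x<X_t(x_0)$ for all $s$ in a subinterval of $[\tau_*,t]$ of length bounded below), one concludes
\[
\mathcal{S}_t\delta_{x_0}(x)\geq C(\eta,\theta,t)
\qquad \text{for all } x\in I_t,
\]
with $C(\eta,\theta,t)>0$ depending only on $\eta,\theta,t$. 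The extension to $\eta=0$ under $\int_0^1\tfrac{1}{g}<+\infty$ is automatic, because in that case $X_s(0)>0$ for every $s>0$ and one can replace the bound $X_s(x_0)\geq X_s(\eta)$ by $X_s(x_0)\geq X_s(0)$, which is strictly positive and exceeds $t_B$ after a finite time independent of $x_0$.

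The main obstacle I anticipate is purely bookkeeping: tracking the $s$-dependence of the support $(X_{t-s}(0),X_t(x_0))$ and choosing $s_1$ and $I_t$ so that the resulting interval is both independent of $x_0\in(\eta,\theta]$ and carries a density bounded below by a constant that depends only on $\eta,\theta$ and $t$. Everything else reduces to continuity of the coefficients on compact subsets of $(0,\infty)$, together with the growth bounds in Hypothesis~\ref{asmp:gB}.
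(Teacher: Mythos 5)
Your plan follows the same strategy as the paper's proof: a Duhamel representation that keeps only the mass produced by a single fragmentation event, explicit pushforward formulas \eqref{eq:growth-sol-form1}--\eqref{eq:growth-sol-form3} for the transport semigroup, and then trimming both the time integration and the spatial interval to get a density bounded below. The inequality $\mathcal{S}_t\delta_{x_0}\geq\int_0^t\mathcal{T}_{t-s}\bigl[\mathcal{F}^+\mathcal{T}_s\delta_{x_0}\bigr]\,\d s$ is exactly what the paper writes as $\int_0^t\mathcal{T}_{t-\tau}(\mathcal{A}(\tau,\cdot))\,\d\tau$ with $\mathcal{A}$ bounded below via $\mathcal{S}_\tau\geq\mathcal{T}_\tau$, so the computations agree.

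The part you dismiss as ``purely bookkeeping'' is however where the argument as written would fail, and the fix is not cosmetic. Your choice $I_t=(X_{t-s_1}(0),X_t(\eta))$ with the $s$-integration running down to $s_1$ cannot give a uniform lower bound, for two related reasons. First, when $\int_0^1 1/g=+\infty$ one has $X_{t-s_1}(0)=0$, so $I_t=(0,X_t(\eta))$; but the Jacobian factor $\exp\bigl(-\int_0^{t-s}g'(X_{-\tau}(x))\,\d\tau\bigr)=\ddx X_{-(t-s)}(x)$ involves $X_{-\tau}(x)$ arbitrarily close to $0$, where $g'$ need not be bounded ($g$ is only locally Lipschitz on $(0,+\infty)$). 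Second, even when $\int_0^1 1/g<+\infty$ so $X_{t-s_1}(0)>0$, the density coming from $s$ near $s_1$ vanishes as $x\downarrow X_{t-s_1}(0)$: the factor $\ddx X_{-(t-s_1)}(x)$ tends to $0$ at the left edge of the support, so the infimum of the integrand over $x\in I_t$ and $s\in[s_1,t]$ is $0$. The resolution, which the paper builds in explicitly, is a two-threshold structure: take the left endpoint of $I_t$ to be $X_{t-t_B}(\eta)$ (a pushforward of $\eta$, not of $0$), but integrate $s$ only over $(t_B+\delta,t)$ for some fixed $\delta>0$. Then for $x\in I_t$ and $s>t_B+\delta$, every backward characteristic $X_{-\tau}(x)$ with $\tau\in[0,t-s]$ lies in the fixed compact interval $[X_\delta(\eta),X_t(\eta)]\subset(0,+\infty)$, on which $g'$ is essentially bounded, which closes the density estimate. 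In your sketch the single parameter $s_1$ plays both roles, which is precisely why the bound would not close; separating them, and using $\eta$ rather than $0$ as the base point of the left endpoint (so that the restriction to $\eta>0$ when $\int_0^1 1/g=+\infty$ arises naturally), repairs the argument and reproduces the paper's proof.
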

	
	\begin{proof}
		Recall that $(\mathcal{T}_t)_{t \geq 0}$ the semigroup associated to the transport equation
		\begin{equation*} 
			\frac{\p }{\p t}m(t,x) +  \frac{\p }{\p x}  (g(x)m(t,x)) + c(x) m(t,x) =0,
		\end{equation*} where $c(x) = B(x) + \lambda$. By Duhamel's formula we have
		\begin{equation*}
			\label{duhamel}
			\mathcal{S}_t n_0(x) = m(t,x) = \mathcal{T}_t n_0(x) + \int_{0}^{t} \mathcal{T}_{t-\tau} (\mathcal{A}(\tau, .))(x) \d \tau,
		\end{equation*}
		where
		$\mathcal{A}(t,x) := 2 \int_{x}^{+\infty} \frac{B(y)}{y} m(t,y)
		\d y$. Fix $0 \leq \eta < \theta$, and take any
		$x_0 \in (\eta, \theta]$. 
		
		If $n_0 = \delta_{x_0}$, a simple
		bound gives
		\begin{equation*}
			\mathcal{S}_t \delta_{x_0} \geq \mathcal{T}_t \delta_{x_0}
			= X_t \# \delta_{x_0} \exp{\left( -\int_0^t c(X_{t-\tau}(x_0)) \d \tau \right)},
		\end{equation*}
		where we have used the expression of $\mathcal{T}_t$ given in
		\eqref{eq:growth-sol-form1} and the fact that the support of
		$X_t \# \delta_{x_0}$ is the single point $\{X_t(x_0)\}$. By
		Hypothesis \ref{asmp:gB} (in particular since $B$ is
		continuous on $[0,X_t(\theta)]$), for some $C_1 = C_1(\theta, t)$
		which is increasing in $t$, we have
		\begin{equation*}
			c(x) = B(x) + \lambda \leq C_1
			\quad \text{for all $x \in (0, X_t(\theta)]$}.
		\end{equation*}
		We deduce that
		\begin{equation} \label{secondbound}
			\mathcal{S}_t \delta_{x_0} \geq X_t \# \delta_{x_0} e^{-C_1 t}
			= \delta_{X_t(x_0)} e^{-C_1 t}.
		\end{equation}
		Using this we obtain
		\begin{equation*}
			\mathcal{A}(t,x) \geq 2 e^{-C_1 t} \frac{B(X_t(x_0))}{X_t(x_0)}
			\quad \text{for all $t > 0$ and $x < X_t(x_0)$.}
		\end{equation*}
		We use that there is some $x_B > 0$ for which $B$ is bounded
		below by a positive quantity on any interval of the form
		$[x_B, R]$. There is some $t_B > 0$ such that for $t > t_B$ we
		have $X_t(x_0) > x_B$ for all $x_0 > \eta$ {(for this to hold,
			notice we may take $\eta = 0$ in the case that
			$\int_0^1 1/g < +\infty$, but we need
			$\eta > 0$ otherwise)}. Hence, for some
		$C_2 = C_2(\eta, \theta, t)$ which is decreasing in $t$, we obtain 
		\begin{equation*}
			\mathcal{A}(t,x) \geq
			C_2 e^{-C_1 t}
			\qquad \text{for all $t > t_B$ and $x < X_t(x_0)$.}
		\end{equation*}
		Take now $t > t_B$, which will stay fixed until the end of the
		proof. The previous bound shows that
		\begin{equation*}
			\mathcal{A}(\tau,x) \geq
			C_2(\eta,\theta, \tau) e^{-C_1(\theta, \tau) \tau}
			\geq
			C_2(\eta,\theta, t) e^{-C_1(\theta, t) \tau}
			=:
			\tilde{C_2} e^{-\tilde{C_1} \tau}
		\end{equation*}
		{for all $t > t_B$,\ $t_B < \tau < t$ and all
			$x < X_\tau(x_0)$.} As a consequence, using
		\eqref{eq:growth-sol-form2} and \eqref{eq:growth-sol-form3},
		\begin{equation*}
			\mathcal{T}_{t-\tau} \mathcal{A}(\tau, x)
			\geq \tilde{C_2} e^{-\tilde{C_1} \tau}
			\exp{\left( -\int_0^{t-\tau} c(X_{-s}(x)) \d s \right)}
			\exp{\left( -\int_0^{t-\tau} g'(X_{-s}(x))\d s \right)}
		\end{equation*}
		for all $t_B < \tau < t$ and $X_{t-\tau}(0) < x <
		X_t(x_0)$. Since $X_{-s}(x) \leq X_t(x_0)$ in this range, we
		can bound this by
		\begin{equation*}
			\mathcal{T}_{t-\tau} \mathcal{A}(\tau, x)
			\geq \tilde{C_2} e^{-2 \tilde{C_1} t}
			\exp{\left( -\int_0^{t-\tau} g'(X_{-s}(x))\d s \right)},
		\end{equation*}
		again for all $t_B < \tau < t$ and
		$X_{t-\tau}(0) < x < X_t(x_0)$. In order to find a lower bound
		for the last exponential we restrict to a smaller $x$
		interval. Since the bound holds for all $x$ with
		\begin{equation*}
			X_{t-\tau}(0) < x < X_t(x_0),
		\end{equation*}
		it holds in particular for all $x$ with
		\begin{equation}
			\label{eq:It-interval}
			X_{t-t_B}(\eta) < x < X_t(\eta).
		\end{equation}
		Again this is a point where we need to take $\eta > 0$ in the
		case $\int_0^1 1/g = +\infty$, since otherwise this gives an
		empty range of $x$. In the case $\int_0^1 1/g < +\infty$,
		$\eta = 0$ is allowed. In this range, the quantity $X_{-s}(x)$
		inside the exponential satisfies
		\begin{equation*}
			X_{\tau-t_B}(\eta) \leq X_{-s}(x) \leq X_t(\eta)
		\end{equation*}
		Choose $\delta > 0$ such that $t_B + \delta < t$. Then for all
		$x$ satisfying \eqref{eq:It-interval} and all $\tau \in (t_B +
		\delta, t)$ we have
		\begin{equation*}
			X_{\delta}(\eta) \leq X_{-s}(x) \leq X_t(\eta).
		\end{equation*}
		Using that $g'(X) \leq C_3$ for all
		$X \in \left [ X_{\delta}(\eta), X_t(\eta) \right ]$ we have
		\begin{equation*}
			\mathcal{T}_{t-\tau} \mathcal{A}(\tau, x)
			\geq \tilde{C_2} e^{-\tilde{C_1} \tau} e^{-C_3 (t-\tau)}
			\geq \tilde{C_2} e^{-C_4 t}
		\end{equation*}
		for all $x$ satisfying \eqref{eq:It-interval} and all
		$\tau \in (t_B + \delta, t)$. A final integration gives, for
		$x$ in the same interval,
		\begin{equation*}
			\int_{0}^{t} \mathcal{T}_{t-\tau} (\mathcal{A}(\tau, \cdot))(x) \d\tau
			\geq \tilde{C_2} e^{-C_4 t}  \int_{t_B+\delta}^{t} \d\tau
			= \tilde{C_2} e^{-C_4 t} (t-t_B-\delta).
		\end{equation*}
		Taking $t_0:=t_B$ gives the result.
	\end{proof}
	
	\subsection{Equal mitosis}
	
	We now consider the fragment distribution
	$p(z) = 2 \delta_{\frac{1}{2}}(z)$ which describes the process of equal mitosis, in which cells of size $x$ split into two equal daughter
	cells of size $x/2$. In Equation \eqref{eq:gfscaledgeneral}, we have then
	$\mathcal{A}(t,x) := 4 B(2x) m(t,2x)$ and the rescaled
	growth-fragmentation equation takes the form
	\begin{equation}
		\label{eq:gfscaledmitosis}
		\begin{aligned}
			\frac{\p }{\p t}m(t,x) + \frac{\p }{\p x} (g(x) m(t,x))
			&= 4B(2x) m(t,2x) - (B(x) + \lambda ) m(t,x),  &&t,x \geq 0,
			\\ m(t,0) &= 0, \qquad \qquad &&t > 0,
			\\ m(0,x) &= n_0(x), \qquad &&x > 0.
		\end{aligned}
	\end{equation}
	The case where $g$ and $B$ are constant was the subject of numerous
	works in the past, most notably~\cite{BCGMZ13,CMP10,HW89,MS16,PR05,vBALZ}.
	For $g(x)=1$ and $B(x)=1$, eigenelements are
	given by
	\[\lambda = 1, \qquad 
	N(x) = \sum_{n=0}^{+\infty} (-1)^n \alpha_n e^{-2^{n+1}x},
	\qquad
	\phi(x) \equiv 1.
	\]
	with $\alpha_n=\frac{2}{2^n-1}\alpha_{n-1}$ and $\alpha_0>0$ a suitable normalization constant,
	and the solution $m(t,x)$ converges exponentially fast to the universal profile $N(x)$, which vanishes as $x\to0$ and $x\to+\infty$.
	However, when a linear growth rate $g(x)= x$ is considered
	Equation \eqref{eq:gfscaledmitosis} exhibits oscillatory behaviour in
	the long time. This is because instead of a dominant real eigenvalue,
	there are nonzero imaginary eigenvalues, so that there exists a set of
	dominant eigenvalues. This type of periodic long time behaviour was
	first observed in \cite{DHT84} and then it was proved
	in \cite{GN88} by using the theory of positive semigroups combined
	with spectral analysis to obtain the convergence to a semigroup of
	rotations. Since the method relies on some compactness arguments, the
	authors considered the equation in a compact subset of $(0,+\infty)$.
	Recently in~\cite{GM19}, the authors proved the oscillatory
	behaviour in the framework of measure solutions for general division rates on
	$(0,+\infty)$. The proof relies on a general relative
	entropy argument combined with the use of Harris's theorem on discrete sub-problems.
	It provides an explicit rate of convergence in weighted total variation norm.
	Here we consider a sublinear growth rate and a
	more general division rate than those so far considered in the
	literature. We exclude of course the case $g(x)=x$, for which we know
	the lower bound (and the exponential convergence) does not hold.
	
	\medskip
	We first need a technical lemma which gives an expression
	for the time integration of a measure moving in time:
	
	\begin{lem}
		\label{lem:time-integral-of-dirac}
		Let $t > 0$ and $F \: [0,t] \to \R$ an injective, differentiable
		function. Then
		\begin{equation*} 
			\int_{0}^{t} \delta_{F(\tau)} (x) \d \tau =\left(F^{-1}\right)'(x)   \1_{\{F(0) \leq x \leq F(t)\}}.
		\end{equation*}
	\end{lem}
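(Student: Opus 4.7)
The plan is to establish the identity by testing both sides against an arbitrary continuous, compactly supported test function $\varphi\colon\R\to\R$ and invoking a change of variables. Since $F$ is injective and differentiable on $[0,t]$, it is strictly monotone; in the intended application $F$ will be strictly increasing (the notation $\{F(0)\le x\le F(t)\}$ presupposes $F(0)\le F(t)$), so I will assume $F$ is strictly increasing with $F'>0$, which makes $F\colon[0,t]\to[F(0),F(t)]$ a $C^1$-diffeomorphism and guarantees that $(F^{-1})'$ is well defined on $[F(0),F(t)]$.

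First, I would unfold the definition of the time-integrated Dirac: by Fubini (applied to the nonnegative measure $\d\tau\otimes\delta_{F(\tau)}$) one has
\[
\int_\R \varphi(x)\left(\int_0^t \delta_{F(\tau)}(x)\,\d\tau\right)\d x
= \int_0^t \varphi(F(\tau))\,\d\tau.
\]
Next I would apply the change of variable $x=F(\tau)$, which is legitimate on $[0,t]$ since $F$ is a $C^1$-diffeomorphism onto $[F(0),F(t)]$. Using $\d x = F'(\tau)\,\d\tau$ and the identity $(F^{-1})'(F(\tau)) = 1/F'(\tau)$ one obtains $\d\tau = (F^{-1})'(x)\,\d x$, and the endpoints $\tau=0$ and $\tau=t$ map to $x=F(0)$ and $x=F(t)$. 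Hence
\[
\int_0^t \varphi(F(\tau))\,\d\tau
= \int_{F(0)}^{F(t)} \varphi(x)\,(F^{-1})'(x)\,\d x
= \int_\R \varphi(x)\,(F^{-1})'(x)\,\1_{\{F(0)\le x\le F(t)\}}\,\d x.
\]

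Since this equality holds for every test function $\varphi$, the measures (or distributions) on the two sides coincide, which is the claimed identity. There is no real obstacle here beyond checking monotonicity of $F$; the only point worth stressing is that the hypothesis of injective differentiability on the compact interval $[0,t]$ ensures $F$ is a strictly monotone $C^1$-diffeomorphism onto its image, and hence the elementary change of variable is applicable and $(F^{-1})'$ is well defined on the full range of integration.
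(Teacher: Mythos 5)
Your proposal is correct and takes essentially the same approach as the paper: integrate both sides against a test function, apply Fubini to reduce to $\int_0^t \varphi(F(\tau))\,\d\tau$, and perform the change of variable $x=F(\tau)$. The only difference is that you spell out the monotonicity justification (injective plus differentiable on an interval gives strict monotonicity, hence a well-defined $(F^{-1})'$), which the paper leaves implicit.
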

	
	\begin{proof}
		Integrating against a smooth test function $\varphi(x)$ we obtain
		\begin{align*}
			\begin{split}
				\int_{0}^{+\infty} \varphi(x)	\int_{0}^{t} \delta_{F(\tau)} (x) \d \tau \d x 
				&= 	\int_{0}^{t} 	\int_{0}^{+\infty} \varphi(x) \delta_{F(\tau)} (x) \d x \d \tau 
				\\&= 	\int_{0}^{t} \varphi (F(\tau)) \d \tau = \int_{F(0)}^{F(t)} \varphi(y) \left(F^{-1}\right)'(y) \d y.
			\end{split}
		\end{align*}
		by using the change of variable $y = F(\tau)$.
	\end{proof}
	
	The following result will ensure a certain \emph{sublinearity} of the
	characteristic flow $X_t$ which we will need later:
	\begin{lem}
		\label{lem:flow-sublinear}
		Assume that the growth rate $g\: (0,+\infty) \to (0,+\infty)$ is
		locally Lipschitz and satisfies
		\begin{equation*}
			\omega g(x) < g(\omega x)
			\qquad \text{for all $x > 0$ and $\omega \in (0,1)$.}
		\end{equation*}
		Then for any $t > 0$ the characteristic flow $X_t$ satisfies
		\begin{equation*}
			\omega X_t(x) < X_t (\omega x),
			\qquad \text{for all $x > 0$ and $\omega \in (0,1)$.}
		\end{equation*}
	\end{lem}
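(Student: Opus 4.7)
The plan is a direct ODE comparison. Fix $x > 0$ and $\omega \in (0,1)$, and set
\[
u(t) := X_t(\omega x) - \omega X_t(x), \qquad t \geq 0.
\]
Then $u(0) = 0$ and, differentiating using the characteristic equation~\eqref{eq:char-ode},
\[
u'(t) = g(X_t(\omega x)) - \omega g(X_t(x)).
\]
At $t=0$ the subhomogeneity hypothesis gives $u'(0) = g(\omega x) - \omega g(x) > 0$, so $u$ is strictly positive on some interval $(0, t_1)$ by continuity of $u'$.

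To extend this to all positive times, I would argue by contradiction. Suppose the set $\{t > 0 : u(t) = 0\}$ is nonempty, and let $t^* := \inf\{t > 0 : u(t) = 0\}$. Continuity of $u$ and the initial strict positivity forces $t^* > 0$, $u(t^*) = 0$, and $u > 0$ on $(0, t^*)$; in particular $u'(t^*) \leq 0$. But $u(t^*) = 0$ means $X_{t^*}(\omega x) = \omega X_{t^*}(x)$, and substituting into the expression for $u'$ and applying the hypothesis at the point $X_{t^*}(x) > 0$ yields
\[
u'(t^*) = g(\omega X_{t^*}(x)) - \omega g(X_{t^*}(x)) > 0,
\]
a contradiction. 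Hence $u(t) > 0$ for every $t > 0$, which is the desired conclusion.

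The argument is elementary, so there is no serious obstacle. The one point requiring care is ruling out a later tangential vanishing of $u$ from above; this is exactly what the strict sign of $u'(t^*)$ forced by the subhomogeneity hypothesis takes care of. The local Lipschitz assumption on $g$ ensures that $t \mapsto X_t(y)$ is $C^1$ on its domain for every $y > 0$, so differentiating $u$ is legitimate, and $g(x) = \mathcal{O}(x)$ as $x \to +\infty$ guarantees that both $X_t(x)$ and $X_t(\omega x)$ are defined on all of $[0, +\infty)$, so no issue of blow-up in finite time arises.
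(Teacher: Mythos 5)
Your proof is correct and is essentially the same comparison argument as the paper's: the paper defines $h_1(t)=\omega X_t(x)$, $h_2(t)=X_t(\omega x)$, observes $h_1'<g(h_1)$ while $h_2'=g(h_2)$ with equal initial data, and invokes the standard differential inequality to conclude $h_1<h_2$. You simply unpack that inequality via the first-crossing contradiction, which is exactly how it is proved.
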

	
	\begin{proof}
		Call $h_1(t) := \omega X_t(x)$ and $h_2(t) := X_t(\omega x)$. The
		second one satisfies the ODE
		\begin{equation*}
			h_2'(t) = g(h_2(t)),
		\end{equation*}
		while the first one satisfies
		\begin{equation*}
			h_1'(t) = \omega g(X_t(x)) < g (\omega X_t(x)) = g(h_1(t)).
		\end{equation*}
		Since they have the same initial condition, this differential
		inequality implies $h_1(t) < h_2(t)$ for all $t > 0$.
	\end{proof}
	
	Our main lower bound for the mitosis case is the following:
	
	\begin{lem}[Lower bound for equal mitosis]
		\label{lem:doeblinmitosis}
		Assume Hypotheses \ref{asmp:k1}, \ref{asmp:gB}, \ref{asmp:gp} hold true
		with the mitosis kernel $p(z) = 2\delta_{\frac{1}{2}}(z).$ Let
		$(\mathcal{S}_t)_{t \geq 0}$ be the semigroup associated to Equation
		\eqref{eq:gfscaledmitosis}. For any $\theta > 0$ there exists
		$t_0 = t_0(\theta) >0$ such that for all $t > t_0$ and
		$x_0 \in (0, \theta]$ it holds that
		\[
		\mathcal{S}_{t} \delta_{x_0}(x) \geq C (t_0, \theta)
		\qquad \text{for all $x \in I_t$},
		\]
		where $I_t$ is an open interval which depends on time $t$, and for
		some quantity $C=C(t, \theta)$ depending only on $t$ and $\theta$.
	\end{lem}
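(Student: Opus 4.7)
My strategy mirrors the uniform-fragment case in Lemma \ref{lem:doeblinuniformfrag}, but one must overcome the fact that applying $\mathcal{A}(\tau,x)=4B(2x)m(\tau,2x)$ to a Dirac $\delta_y$ produces yet another Dirac, $2B(y)\delta_{y/2}$, rather than a density. A single Duhamel iteration therefore does not by itself yield a lower bound on a set of positive Lebesgue measure. The remedy is to exploit the time integration in Duhamel's formula, combined with Lemma \ref{lem:time-integral-of-dirac} and the strict sublinearity of the flow (Lemma \ref{lem:flow-sublinear}), to spread the family of Diracs parametrised by $\tau$ into an absolutely continuous measure on an interval.

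Concretely, I would first write $\mathcal{S}_t\delta_{x_0}\geq\int_0^t\mathcal{T}_{t-\tau}\mathcal{A}(\tau,\cdot)\,\d\tau$ and bound $m(\tau,\cdot)\geq\mathcal{T}_\tau\delta_{x_0}\geq e^{-C_1\tau}\delta_{X_\tau(x_0)}$, where $C_1$ is a uniform upper bound of $c=B+\lambda$ on $[0,X_t(\theta)]$. The action of the mitosis $\mathcal{A}$ on a Dirac, followed by a further transport step, then gives
\[
\mathcal{S}_t\delta_{x_0}(x)\geq 2e^{-C_1 t}\int_0^t B(X_\tau(x_0))\,\delta_{F(\tau)}(x)\,\d\tau,\qquad F(\tau):=X_{t-\tau}\!\bigl(X_\tau(x_0)/2\bigr).
\]
Using $X_s(\cdot)=H^{-1}(s+H(\cdot))$, direct differentiation yields
\[
F'(\tau)=g(F(\tau))\Bigl(-1+\frac{g(X_\tau(x_0))}{2\,g(X_\tau(x_0)/2)}\Bigr),
\]
which is strictly negative by Hypothesis \ref{asmp:gp} applied with $\omega=1/2$. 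Thus $F$ is a strictly decreasing $\mathcal{C}^1$-diffeomorphism from $[0,t]$ onto $[X_t(x_0)/2,\,X_t(x_0/2)]$, and Lemma \ref{lem:time-integral-of-dirac} delivers
\[
\mathcal{S}_t\delta_{x_0}(x)\geq 2e^{-C_1 t}\,B\bigl(X_{F^{-1}(x)}(x_0)\bigr)\,|(F^{-1})'(x)|\,\1_{\{X_t(x_0)/2<x<X_t(x_0/2)\}}.
\]

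The main obstacle is then to extract an interval $I_t$ contained in $(X_t(x_0)/2,X_t(x_0/2))$ for \emph{every} $x_0\in(0,\theta]$, since this interval depends on $x_0$ and the individual intervals may be disjoint for moderate $t$. This is where condition \eqref{eq:H-1-power} of Hypothesis \ref{asmp:gp} plays its essential role: for any $\varepsilon>0$ one can choose $t_0=t_0(\theta,\varepsilon)$ large enough so that both ratios $X_t(x_0)/X_t(0)$ and $X_t(x_0/2)/X_t(0)$ lie in $(1-\varepsilon,1+\varepsilon)$ uniformly for $x_0\in[0,\theta]$ and $t>t_0$ (using also that $H(\cdot)-H_0$ is bounded on $[0,\theta]$). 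Picking $\varepsilon<1/3$, the interval $I_t:=\bigl(\tfrac{1+\varepsilon}{2}X_t(0),\,(1-\varepsilon)X_t(0)\bigr)$ has positive length and is contained in $(X_t(x_0)/2,X_t(x_0/2))$ for every $x_0\in(0,\theta]$.

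Finally, on $I_t$ a uniform positive lower bound is obtained from three factors: (i) $B(X_\tau(x_0))\geq B_\ast>0$ for $\tau$ in a suitable sub-interval, since $X_\tau(x_0)\geq X_\tau(0)>0$ (thanks to $\int_0^1 1/g<+\infty$) and $B$ is positive on compact sets bounded away from $0$, using \eqref{eq:tB}; (ii) $|(F^{-1})'(x)|$ is bounded below because $|F'|$ is bounded above on the compact range involved, the strict sublinearity of $g$ translating via compactness into a uniform positive lower bound on $1-g(y)/(2g(y/2))$; (iii) the exponential $e^{-C_1 t}$ depends only on $t$ and $\theta$. Multiplying these contributions yields $\mathcal{S}_t\delta_{x_0}(x)\geq C(t_0,\theta)>0$ on $I_t$, uniformly in $x_0\in(0,\theta]$, as required.
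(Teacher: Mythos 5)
Your proposal follows essentially the same route as the paper's proof: Duhamel's formula, the crude bound $m(\tau,\cdot)\geq e^{-C_1\tau}\delta_{X_\tau(x_0)}$, pushing one mitosis event through $\mathcal{T}_{t-\tau}$, introducing the same decreasing function $F(\tau)=X_{t-\tau}(X_\tau(x_0)/2)$, applying Lemma~\ref{lem:time-integral-of-dirac}, and using condition~\eqref{eq:H-1-power} to extract a nonempty interval that works uniformly in $x_0\in(0,\theta]$. The only genuine variation is that you verify $F'<0$ by the explicit formula $F'(\tau)=g(F(\tau))\bigl(-1+g(X_\tau(x_0))/(2g(X_\tau(x_0)/2))\bigr)$ rather than invoking Lemma~\ref{lem:flow-sublinear}; this is equivalent and perhaps cleaner, since it exposes directly what must be bounded to control $|(F^{-1})'|$. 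Your choice of $I_t$ is slightly different from the paper's $\bigl[\tfrac12 X_t(\theta),\,X_{t-t_B}(\tfrac12 X_{t_B}(0))\bigr]$ but serves the same purpose and is justified the same way. One small imprecision: in point (ii) you invoke a uniform positive lower bound on $1-g(y)/(2g(y/2))$, but for a lower bound on $|(F^{-1})'|$ you only need $|F'|$ bounded \emph{above}, which requires $g(F)$ bounded on a compact interval and the trivial upper bound $1-g(y)/(2g(y/2))\leq 1$; the stated lower bound on that factor plays no role there. Also, as you already hint, you must restrict $\tau$ away from $0$ (equivalently, shrink or re-justify $I_t$ near its right endpoint) to guarantee $B(X_\tau(x_0))\geq B_\ast$; the paper handles this by integrating only over $\tau\in[t_B,t]$ from the outset, and either way the gap closes using~\eqref{eq:H-1-power} once more. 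These are cosmetic rather than substantive differences.
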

	
	\begin{proof}
		Fix $\theta > 0$ and take any $x_0 \in (0,\theta]$. We follow the
		same strategy as in the proof of Lemma \ref{lem:doeblinuniformfrag}. Here the only different part is $\mathcal{A}(t,x)$. We consider the
		semigroup $(\mathcal{T}_t)_{t \geq 0}$ defined as in \eqref{T_t} and
		$(\mathcal{S}_t)_{t \geq 0}$ defined as the semigroup associated to
		\eqref{eq:gfscaledmitosis} with
		$\mathcal{A}(t,x) = 4 B(2x) m(t,2x)$.  Using \eqref{secondbound} we have
		\[
		\mathcal{T}_t \delta_{x_0} (2x)
		\geq
		X_t \# \delta_{x_0} (2x)  e^{-C_1t}
		=
		\frac{1}{2}\delta_{\frac{1}{2} X_t \left(x_0 \right)} (x)
		e^{-C_1t},
		\]
		for $C_1 = C_1(\theta, t)$, increasing in $t$.
		we obtain 
		\[
		\mathcal{A} (t,x) \geq
		2 e^{-C_1 t} B \left(  X_t \left(x_0\right) \right)
		\delta_{\frac{1}{2} X_t \left(x_0 \right)} (x) 
		\quad \text{for all } t >0.
		\]
		We know that there exists some $x_B > 0$ for which $B$ is bounded below
		by a positive quantity in each interval of the form $[x_B, R]$. Take
		$t_B > 0$ such that for $t > t_B$ we have
		$X_t \left(x_0 \right) > x_B$ for all $x_0 > 0$. Hence, for some
		$C_2 = C_2(\theta, t) > 0$, decreasing in $t$,
		\begin{equation*}
			\mathcal{A}(t,x) \geq  C_2 e^{-C_1 t}
			\delta_{\frac{1}{2} X_t \left(x_0 \right)} (x)
			\qquad \text{for all $t > t_B$.}
		\end{equation*}
		Fix now any $t > t_B$. For $t_B < \tau < t$ we have
		\begin{multline*}
			\mathcal{A}(\tau,x)
			\geq
			C_2(\theta, \tau) e^{-C_1(\theta, \tau) \tau} \delta_{\frac{1}{2} X_\tau \left(x_0 \right)} (x) \geq
			C_2(\theta, t) e^{-C_1(\theta, t) t} \delta_{\frac{1}{2} X_\tau \left(x_0 \right)} (x)
			=:
			\tilde{C_2} e^{-\tilde{C_1} t} \delta_{\frac{1}{2} X_\tau \left(x_0 \right)} (x).
		\end{multline*}
		Hence using  \eqref{eq:growth-sol-form1} we have
		\begin{align*}
			\begin{split}
				\mathcal{T}_{t-\tau} \mathcal{A}(\tau, x)
				&\geq
				\tilde{C_2} e^{-\tilde{C_1} t}
				\delta_{ X_{t-\tau} \left( \frac{1}{2} X_\tau (x_0) \right)} (x)
				\exp{\left( -\int_0^{t-\tau} c(X_{-s}(x))\d s \right)} 
				\\
				&\geq \tilde{C_2} e^{-2\tilde{C_1} t} \delta_{ X_{t-\tau} \left( \frac{1}{2} X_\tau (x_0) \right)} (x),
			\end{split}
		\end{align*}
		for all $\tau \in (t_B, t)$. Define $F( \tau ) : = X_{t-\tau} \left(
		\frac{1}{2} X_\tau (x_0) \right)$, and notice that it is a
		strictly decreasing function, since Lemma \ref{lem:flow-sublinear}
		ensures that for $\tau_1 < \tau_2$
		\begin{equation*}
			F(\tau_2)
			= X_{t-{\tau_2}} \left( \frac{1}{2} X_{\tau_2} (x_0) \right)
			< X_{t - \tau_2} X_{\tau_2 - \tau_1}
			\left( \frac{1}{2} X_{\tau_1} (x_0) \right)
			= F(\tau_1).
		\end{equation*}
		By Lemma \ref{lem:time-integral-of-dirac} we
		obtain
		\begin{align*}
			\begin{split}
				\int_{0}^{t} \mathcal{T}_{t-\tau} \mathcal{A}(\tau, x)  \d \tau
				&\geq
				\int_{t_B}^{t} \mathcal{T}_{t-\tau} \mathcal{A}(\tau, x)  \d \tau
				\geq
				\tilde{C_2} e^{-2 \tilde{C_1} t} \int_{t_B}^{t} \delta_{ X_{t-\tau} \left(
					\frac{1}{2} X_\tau (x_0) \right)} (x) \d \tau
				\\
				&\geq \tilde{C_2} e^{-2 \tilde{C_1} t} \left ( F(\tau )\right)' (x)\1_{\mathcal{I}_{x_0}} 
			\end{split}
		\end{align*}
		where we define
		\begin{equation*}
			\label{interval}
			\mathcal{I}_{x_0} :=
			\left[
			\frac12 X_t \left (x_0 \right ),
			\
			X_{t-t_B}\left(\frac{1}{2} X_{t_B}(x_0) \right)
			\right ].
		\end{equation*}
		Again by Lemma \ref{lem:flow-sublinear} we see that this interval is
		nonempty. Since we need a bound which is independent of $x_0$, we
		consider the intersection of all these intervals as $x_0$ moves in
		the interval $(0, \theta)$. That intersection is
		\begin{equation*}
			\mathcal{I}_{t} :=
			\left[
			\frac12 X_t \left (\theta \right ),
			\
			X_{t-t_B}\left(\frac{1}{2} X_{t_B}(0) \right)
			\right ].
		\end{equation*}
		Condition \eqref{eq:H-1-power} shows that this interval is
		nonempty for $t$ large enough, since
		\begin{equation*}
			\frac{X_t \left (\theta \right )}
			{X_{t-t_B}\left(\frac{1}{2} X_{t_B}(0) \right)}
			=
			\frac{H^{-1} (t + \theta)}
			{H^{-1}\left(t - t_B + \frac{1}{2} X_{t_B}(0) \right) }
			\to 1
			\qquad
			\text{as $t \to +\infty$.}
		\end{equation*}
		This gives the result.
	\end{proof}

	\section{Proof of the main result}
	\label{sec:proof-main}
	
	We conclude by giving the proof of Theorem \ref{thm:main}. It is a
	direct application of Harris's Theorem \ref{thm:Harris}. Hypotheses \ref{hyp:Lyapunov} and \ref{hyp:localDoeblin} need to be verified. We
	already verified Hypothesis \ref{hyp:Lyapunov} (Lyapunov condition) in
	Section \ref{sec:lyapunov} (see the corollary given in each case); in
	fact, we have proved that given any $t_0 > 0$ we can satisfy
	Hypothesis~\ref{hyp:Lyapunov} for any $t \geq t_0$, with constants
	$\gamma$, $K$ \emph{which are independent of $t$} (since we can always
	take $\gamma := e^{-C_1 t_0}$, $K := \tilde{C}$).
	
	Regarding Hypothesis~\ref{hyp:localDoeblin}, the lower bounds we
	obtained in Section \ref{sec:LowerBounds} are for $m(t,x)$ which is a
	solution to Equation \eqref{eq:gfscaledgeneral}. However we need to satisfy the
	minorisation condition for $f(t,x) = \phi(x) m(t,x)$ since the
	equation on $f$ conserves mass; thus the associated semigroup is
	Markovian, and we may apply Harris's theorem to it.
	The equation satisfied by $f$ is
	\begin{equation}
		\begin{aligned}
			\label{eq:gfconserv}
			\frac{\p }{\p t}f(t,x) + \phi(x)\frac{\p }{\p x}
			&\left( \frac{g(x)}{\phi(x)} f(t,x) \right)
			+ (B(x) + \lambda) f(t,x) 
			\\
			&= \phi(x) \int_{x}^{+ \infty}  \frac{B(y)}{y} p\left(
			\frac{x}{y}\right) f(t,y) \d y, \qquad &&t,x \geq 0,\\
			m(t,0) &= 0, \qquad \qquad &&t > 0,\\
			m(0,x) &=  n_0(x), \qquad  &&x > 0.
		\end{aligned}
	\end{equation}
	We define $(\mathcal{F}_t)_{t \geq 0}$ as the semigroup associated to Equation
	\eqref{eq:gfconserv}, or alternatively by the relationship
	\begin{equation*}
		\mathcal{F}_t (\phi n_0) := \phi \mathcal{S}_t n_0,
	\end{equation*}
	for any nonnegative measure $n_0$ such that $\phi n_0$ is a
	finite measure on $(0,+\infty)$.
	
	\begin{lem}[Minorisation condition for $f(t,x)$]
		\label{lem:doeblingeneral}
		We assume Hypotheses \ref{asmp:k1}, \ref{asmp:p}, \ref{asmp:gB} and
		\ref{asmp:gp} hold true. Let $(\mathcal{F}_t)_{t \geq 0}$ be the
		semigroup associated to Equation \eqref{eq:gfconserv}. For any
		$0 > \eta >\theta$ there exists $t_0 = t_0(\eta, \theta) >0$ such that for
		all $t > t_0$ and $x_0 \in [\eta, \theta]$ it holds that
		\[ \mathcal{F}_{t} \delta_{x_0}(x) \geq \breve{C} (\eta, \theta, t)
		\qquad \text{for all $x \in I_t$},
		\]
		where $I_t$ is an open interval which depends on time $t$, and for
		some quantity $\breve{C} = \breve{C}(\eta, \theta,t)$ depending only
		on $\eta$, $\theta$ and $t$. If in addition we assume that
		\begin{equation*}
			\int_0^1 \frac{1}{g(x)} \d x < +\infty,
		\end{equation*}
		then the above result also holds when taking $\eta = 0$.
	\end{lem}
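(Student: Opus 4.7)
The plan is to transfer the minorisation for $\mathcal{S}_t$ proved in Section~\ref{sec:LowerBounds} to the mass-preserving semigroup $\mathcal{F}_t$ via the $h$-transform identity $\mathcal{F}_t(\phi n_0) = \phi\,\mathcal{S}_t n_0$. Applied to the initial datum $n_0 = \phi(x_0)^{-1}\delta_{x_0}$, and using that $\phi\,\delta_{x_0}=\phi(x_0)\delta_{x_0}$ as measures, this will yield the pointwise relation
\begin{equation*}
  \mathcal{F}_t\delta_{x_0}(x) \;=\; \frac{\phi(x)}{\phi(x_0)}\,\mathcal{S}_t\delta_{x_0}(x),
  \qquad t\geq 0,\ x, x_0>0,
\end{equation*}
so the claimed lower bound on $\mathcal{F}_t\delta_{x_0}$ reduces to the one already established for $\mathcal{S}_t\delta_{x_0}$ together with uniform control of the multiplicative factor $\phi(x)/\phi(x_0)$.

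Next I will split into the two kernels allowed by Hypothesis~\ref{asmp:p}. In the uniform fragment case $p(z)=2$, I will invoke Lemma~\ref{lem:doeblinuniformfrag} to obtain, for $t$ large enough depending on $\eta,\theta$, a constant $C(\eta,\theta,t)>0$ and an open interval $I_t\subset[X_{t-t_B}(\eta),X_t(\theta)]$ such that $\mathcal{S}_t\delta_{x_0}(x)\geq C(\eta,\theta,t)$ for every $x_0\in[\eta,\theta]$ and $x\in I_t$, with $\eta=0$ admissible precisely when $\int_0^1 1/g<+\infty$. In the mitosis case $p=2\delta_{1/2}$, I will use Lemma~\ref{lem:doeblinmitosis}, which gives the analogous estimate for every $x_0\in(0,\theta]$; here Hypothesis~\ref{asmp:gp} forces $\int_0^1 1/g<+\infty$, so the $\eta=0$ branch of the present lemma is the only one relevant.

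Finally I will control the ratio $\phi(x)/\phi(x_0)$. By Theorem~\ref{thm:dualeigenfunction}, $\phi$ is continuous, strictly positive on $(0,+\infty)$, and satisfies $\phi\leq C(1+x^k)$, so $\phi(x_0)$ is uniformly bounded above on $[\eta,\theta]$ by a constant depending only on $\theta$. Likewise, since $I_t$ is a bounded interval that either is bounded away from $0$ (when $\eta>0$) or satisfies $\inf I_t\geq 0$ together with $\phi(0)>0$ (when $\eta=0$ and $\int_0^1 1/g<+\infty$, again by Theorem~\ref{thm:dualeigenfunction}), continuity and positivity of $\phi$ provide a strictly positive lower bound on $\phi$ over $I_t$ depending only on $\eta,\theta,t$. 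Multiplying these bounds produces the constant $\breve C(\eta,\theta,t)$ required in the statement.

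The only sensitive point, and the structural reason the $\eta=0$ case is admissible exactly when $\int_0^1 1/g<+\infty$, is that in the opposite regime $\phi(0)=0$, so no positive lower bound on $\phi$ is available near $x=0$; but in that regime both Lemma~\ref{lem:doeblinuniformfrag} and the hypotheses of the present lemma already require $\eta>0$, which keeps $I_t$ away from the origin and restores the needed lower bound on $\phi$. Beyond this dichotomy the argument is just the deterministic identity above combined with the two minorisation lemmas and continuity of $\phi$.
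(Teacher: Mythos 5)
Your proof is correct and takes essentially the same route as the paper: transfer the minorisation bounds for $\mathcal{S}_t$ from Lemmas~\ref{lem:doeblinuniformfrag} and~\ref{lem:doeblinmitosis} to $\mathcal{F}_t$ via the identity $\mathcal{F}_t\delta_{x_0} = \tfrac{\phi}{\phi(x_0)}\,\mathcal{S}_t\delta_{x_0}$, then control the ratio using continuity and strict positivity of $\phi$. In fact, you are slightly more precise than the paper's write-up in separating the upper bound on $\phi(x_0)$ for $x_0\in[\eta,\theta]$ from the lower bound on $\phi(x)$ for $x\in I_t$ (the paper implicitly bounds $\phi$ above and below only on $[\eta,\theta]$ and $(0,\theta]$), and in noting that in the mitosis case Hypothesis~\ref{asmp:gp} already forces $\int_0^1 1/g<+\infty$ so only the $\eta=0$ branch is in play.
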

	
	\begin{proof}
		Let $(\mathcal{S}_t)_{t \geq 0}$ and $(\mathcal{F}_t)_{t \geq 0}$ be
		the semigroups associated to Equations \eqref{eq:gfscaledgeneral} and
		\eqref{eq:gfconserv} respectively. Under the conditions of Lemma
		\ref{lem:doeblinuniformfrag} we have a lower bound for
		$\mathcal{S}_{t} \delta_{x_0}(x) \geq C (\eta, \theta, t)$. It
		immediately translates to a lower bound on $\mathcal{F}_t$ in all
		cases:
		\begin{enumerate}
			\item If $\int_0^1 \frac{1}{g(x)} \d x = +\infty$, we know from
			\cite{BCG13} that $\phi(x)$ is bounded
			in each interval of the form $(0, \theta]$ {(since it is
				continuous and tends to a positive constant at
				$x=0$)}.
			
			\item If $\int_0^1 \frac{1}{g(x)} \d x = +\infty$, then since
			$\phi(x)$ is continuous there exist constants
			$\hat{C}_1(\eta, \theta)$, $\hat{C}_2(\eta, \theta) > 0$ such that
			$\hat{C}_1 \leq \phi(y) \leq \hat{C}_2$ for all
			$y \in [\eta, \theta]$.
		\end{enumerate}
		On the other hand, under the conditions of Lemma
		\ref{lem:doeblinmitosis} we know again that $\phi(x)$ is bounded
		above and below by positive constants in each interval of the form
		$(0, \theta]$.
		
		Therefore we obtain for $x_0 \in [\eta,\theta]$:
		\[ \mathcal{F}_t \delta_{x_0}(x)
		= \frac{\phi(x)}{\phi(x_0)} \mathcal{S}_t \delta_{x_0} (x)
		\geq
		\frac{\hat{C}_1(\eta, \theta)}{\hat{C}_2(\eta, \theta)}
		C(\eta, \theta, t) := \breve{C}(\eta, \theta, t),
		\]
		allowing $\eta=0$ if $\int_0^1 1/g < +\infty$.
	\end{proof}

	\begin{proof}[Proof of Theorem \ref{thm:main}]
		As remarked above, the semigroup $(\mathcal{F}_t)_{t \geq 0}$
		satisfies the Lyapunov condition in Hypothesis~\ref{hyp:Lyapunov} in
		all cases, for $t \geq 1$, with a weight $V$ and constants $\gamma$,
		$K$ which are independent of $t$. In order to satisfy
		Hypothesis~\ref{hyp:localDoeblin} it is enough then to find any time
		$t \geq 1$ for which we have a uniform lower bound whenever the initial
		condition is a delta function supported on a region of the form
		\begin{equation*}
			\mathcal{C} := \left \{x > 0 \mid V(x) \leq R \right \}
		\end{equation*}
		for some $R > 2K / (1- \gamma)$. Lemma \ref{lem:doeblingeneral}
		gives this in all cases. Notice that in the cases in which the lower
		bound is only available for $x_0 \in [\eta, \theta]$ with $\eta >
		0$, the function $V$ we give in Section \ref{sec:lyapunov} is
		unbounded at $x=0$, and thus the region $\mathcal{C}$ is contained
		in an interval of that form.  
	\end{proof}
	
	\paragraph{\bf Explicit calculations for the self-similar fragmentation case.}
	We recall that the so-called self-similar fragmentation equation corresponds to a linear growth rate \mbox{$g(x)=x$}, a monomial total fragmentation rate $B(x)=x^b$, $b>0$, and a self-similar kernel (here we take the homogeneous self-similar kernel $p(z)\equiv2$).
	In that case, all the constants appearing in Harris's theorem can be quantified.
	This is due to the explicit expression $\phi(x)=x$ of the dual eigenfunction when $g(x)=x$.
	For the computations we choose for instance the parameters $k=0$ and $K=2$, which correspond to the Lyapunov function $V(x)=(x^k+x^K)/\phi(x)=1/x+x$.
	We start with Hypothesis~\ref{hyp:Lyapunov}.
	Using that $B(x)=x^b$ we can make the proof of Lemma~\ref{lyapunovBlin} more quantitative.
	Indeed the function $\Phi$ defined in~\eqref{Phi_lin} reads in the present case
	\[\Phi(x)=-\frac13x^{b+1}+\frac32x+x^{b-1}-\frac12x^{-1}.\]
	Treating separately the cases $x\leq1$, $x\geq1$, and $b\leq2$, $b\geq2$, we can check that
	\[\Phi(x)\leq-\frac13x^{b+1}+\frac32x+x^{b-1}\leq5\Big(\frac{15}{2}\Big)^{\frac1b+\frac b2}\]
	for all $x>0$.
	So Hypothesis~\ref{hyp:Lyapunov} is verified for any $t_0>0$ with the constants
	\[\gamma=e^{-\frac{t_0}{2}}\qquad\text{and}\qquad K=10\Big(\frac{15}{2}\Big)^{\frac1b+\frac b2}.\]
	We now turn to Hypothesis~\ref{hyp:localDoeblin}.
	We choose
	\[R=\frac{4K}{1-\gamma}\]
	and we notice that since $V(x)=1/x+x$
	\[\mathcal C=\left \{ x:V(x)\leq R \right \}\subset[1/R,R].\]
	For $\phi(x)=x$ and $p(z)\equiv2$, Equation~\eqref{eq:gfconserv} reads
	\[\frac{\p }{\p t}f(t,x) + \frac{\p }{\p x}\left( xf(t,x) \right) + B(x) f(t,x) = 2 \int_{x}^{+ \infty}  B(y)f(t,y)\frac{x}{y} \d y\]
	and we can prove directly on this equation, proceeding similarly as in Lemma~\ref{lem:doeblinuniformfrag}, that for any $t_0>0$ and all $x_0\in[1/R,R]$
	\[\mathcal F_{t_0}\delta_{x_0}\geq \alpha\nu\]
	with
	\[\nu(\d y)=\frac{2e^{-2t_0}}{R}\1_{[0,Re^{t_0}]}(y)y\d y\qquad\text{and}\qquad \alpha=R^{b+3}t_0\exp\Big(-2R^\gamma \frac{e^{b t_0}}{b}\Big).\]
	We are now in position to apply Harris's theorem.
	Choosing in Theorem~\ref{thm:Harris}
	\[\alpha_0=\frac\alpha2\qquad\text{and}\qquad\gamma_0=\gamma+\frac{2K}R\]
	we obtain
	\[\bar\alpha=\max\left\{1-\frac\alpha2,\frac{1-\gamma+\frac{1+\gamma}{2}\alpha}{1-\gamma+\alpha}\right\}.\]
	Choosing $t_0=2\log2$ we get
	\[\gamma=\frac12,\quad R=80\Big(\frac{15}{2}\Big)^{\frac1b+\frac b2},\quad\alpha=2\log2 R^{b+3}e^{-2(4R)^b/b}\]
	and
	\[\bar\alpha=\max\left\{1-\frac\alpha2,1-\frac{\alpha}{2(1+2\alpha)}\right\}=1-\frac{\alpha}{2(1+2\alpha)}.\]
	This proves that we can choose $\rho$ as in~\eqref{rho_num}.
	
	\section*{Acknowledgements}
	
	JAC and HY were supported by the project MTM2017-85067-P, funded by
	the Spanish government and the European Regional Development Fund. HY
	was also supported by the Basque Government through the BERC 2018-2021
	program, by the Spanish Ministry of Economy and Competitiveness
	MINECO: BCAM Severo Ochoa excellence accreditation SEV-2017-0718, by the ``la Caixa'' Foundation and by the European Research Council (ERC)
	under the European Union’s Horizon 2020 research and innovation
	programme (grant agreement No 639638). JAC and HY gratefully
	acknowledge the support of the Hausdorff Research Institute for
	Mathematics (Bonn), through the Junior Trimester Program on Kinetic
	Theory.
	PG was supported by the ANR project NOLO, ANR-20-CE40-0015, funded by the French Ministry of Research.
	
	\
	
	\bibliography{bibliography}

\begin{thebibliography}{10}

\bibitem{APM03}
M.~Adimy and L.~Pujo-Menjouet.
\newblock {Asymptotic behavior of a singular transport equation modelling cell
  division.}
\newblock {\em {Discrete Contin. Dyn. Syst., Ser. B}}, 3(3):439--456, 2003.

\bibitem{O92}
O.~Arino.
\newblock {Some spectral properties for the asymptotic behavior of semigroups
  connected to population dynamics.}
\newblock {\em {SIAM Rev.}}, 34(3):445--476, 1992.

\bibitem{BMR02}
F.~Baccelli, D.~R. McDonald, and J.~Reynier.
\newblock A mean-field model for multiple {TCP} connections through a buffer
  implementing red.
\newblock {\em Perform. Eval.}, 49(1):77 -- 97, 2002.

\bibitem{BCG13}
D.~{Balagu\'e}, J.~A. {Ca\~nizo}, and P.~{Gabriel}.
\newblock {Fine asymptotics of profiles and relaxation to equilibrium for
  growth-fragmentation equations with variable drift rates.}
\newblock {\em {Kinet. Relat. Models}}, 6(2):219--243, 2013.

\bibitem{BA06}
J.~{Banasiak} and L.~{Arlotti}.
\newblock {\em {Perturbations of Positive Semigroups with Applications.}}
\newblock London: Springer, 2006.

\bibitem{BLL19}
J.~{Banasiak}, W.~{Lamb}, and P.~{Lauren\c{c}ot}.
\newblock {\em {Analytic Methods for Coagulation-Fragmentation Models. Volume
  I.}}
\newblock CRC Press, 2019.

\bibitem{BPR12}
J.~{Banasiak}, K.~{Pich\'or}, and R.~{Rudnicki}.
\newblock {Asynchronous exponential growth of a general structured population
  model.}
\newblock {\em {Acta Appl. Math.}}, 119(1):149--166, 2012.

\bibitem{BCG17}
V.~Bansaye, B.~Cloez, and P.~Gabriel.
\newblock Ergodic behavior of non-conservative semigroups via generalized
  {D}oeblin's conditions.
\newblock {\em Acta Appl. Math.}, 166:29--72, 2020.

\bibitem{BCGM19}
V.~Bansaye, B.~Cloez, P.~Gabriel, and A.~Marguet.
\newblock A non-conservative {H}arris ergodic theorem.
\newblock {\em arXiv preprint arXiv:1903.03946}, 2019.

\bibitem{BCGMZ13}
J.-B. {Bardet}, A.~{Christen}, A.~{Guillin}, F.~{Malrieu}, and P.-A. {Zitt}.
\newblock {Total variation estimates for the TCP process.}
\newblock {\em {Electron. J. Probab.}}, 18(10):1--21, 2013.

\bibitem{B68}
G.~I. Bell.
\newblock Cell growth and division. {III}. {C}onditions for balanced
  exponential growth in a mathematical model.
\newblock {\em {Biophys. J.}}, 8(4):431--444, 1968.

\bibitem{BA67}
G.~I. Bell and E.~C. Anderson.
\newblock Cell growth and division. {I}. {A} mathematical model with
  applications to cell volume distributions in mammalian suspension cultures.
\newblock {\em {Biophys. J.}}, 7(4):329--351, 1967.

\bibitem{BDG18}
\'E. {Bernard}, M.~{Doumic}, and P.~{Gabriel}.
\newblock {Cyclic asymptotic behaviour of a population reproducing by fission
  into two equal parts.}
\newblock {\em {Kinet. Relat. Models}}, 12(3):551--571, 2019.

\bibitem{BG17}
\'E. {Bernard} and P.~{Gabriel}.
\newblock {Asymptotic behavior of the growth-fragmentation equation with
  bounded fragmentation rate.}
\newblock {\em {J. Funct. Anal.}}, 272(8):3455--3485, 2017.

\bibitem{BG18}
\'E. {Bernard} and P.~{Gabriel}.
\newblock Asynchronous exponential growth of the growth-fragmentation equation
  with unbounded fragmentation rate.
\newblock {\em J. Evol. Equ.}, 20(2):375--401, 2020.

\bibitem{B03}
J.~{Bertoin}.
\newblock {The asymptotic behavior of fragmentation processes.}
\newblock {\em {J. Eur. Math. Soc. (JEMS)}}, 5(4):395--416, 2003.

\bibitem{B19}
J.~{Bertoin}.
\newblock {On a Feynman-Kac approach to growth-fragmentation semigroups and
  their asymptotic behaviors.}
\newblock {\em {J. Funct. Anal.}}, 277(11):29, 2019.

\bibitem{BW16}
J.~{Bertoin} and A.~R. {Watson}.
\newblock {Probabilistic aspects of critical growth-fragmentation equations.}
\newblock {\em {Adv. Appl. Probab.}}, 48(A):37--61, 2016.

\bibitem{BW18}
J.~{Bertoin} and A.~R. {Watson}.
\newblock {A probabilistic approach to spectral analysis of
  growth-fragmentation equations.}
\newblock {\em {J. Funct. Anal.}}, 274(8):2163--2204, 2018.

\bibitem{BW20}
J.~Bertoin and A.~R. Watson.
\newblock The strong {M}althusian behavior of growth-fragmentation processes.
\newblock {\em Ann. H. Lebesgue}, 3:795--823, 2020.

\bibitem{B18}
F.~Bouguet.
\newblock A probabilistic look at conservative growth-fragmentation equations.
\newblock In {\em S{\'e}minaire de Probabilit{\'e}s XLIX}, pages 57--74.
  Springer, 2018.

\bibitem{BCP08}
F.~B. {Brikci}, J.~{Clairambault}, and B.~{Perthame}.
\newblock {Analysis of a molecular structured population model with possible
  polynomial growth for the cell division cycle.}
\newblock {\em {Math. Comput. Modelling}}, 47(7-8):699--713, 2008.

\bibitem{BGP19}
J.~{Broda}, A.~{Grigo}, and N.~P. {Petrov}.
\newblock {Convergence rates for semistochastic processes.}
\newblock {\em {Discrete Contin. Dyn. Syst., Ser. B}}, 24(1):109--125, 2019.

\bibitem{CCC13}
J.~A. Ca\~{n}izo, J.~A. Carrillo, and S.~Cuadrado.
\newblock Measure solutions for some models in population dynamics.
\newblock {\em Acta Appl. Math.}, 123:141--156, 2013.

\bibitem{CY19}
J.~A. {Ca\~nizo} and H.~{Yolda\c{s}}.
\newblock {Asymptotic behaviour of neuron population models structured by
  elapsed-time.}
\newblock {\em {Nonlinearity}}, 32(2):464--495, 2019.

\bibitem{CCM11-2}
M.~J. {C\'aceres}, J.~A. {Ca\~nizo}, and S.~{Mischler}.
\newblock {Rate of convergence to self-similarity for the fragmentation
  equation in \(L^1\) spaces.}
\newblock {\em {Commun. Appl. Ind. Math.}}, 1(2):299--308, 2010.

\bibitem{CCM11}
M.~J. {C\'aceres}, J.~A. {Ca\~nizo}, and S.~{Mischler}.
\newblock {Rate of convergence to an asymptotic profile for the self-similar
  fragmentation and growth-fragmentation equations.}
\newblock {\em {J. Math. Pures Appl.}}, 96(4):334--362, 2011.

\bibitem{CLODLMP}
V.~{Calvez}, N.~{Lenuzza}, D.~{Oelz}, J.-P. {Deslys}, P.~{Laurent},
  F.~{Mouthon}, and B.~{Perthame}.
\newblock {Size distribution dependence of prion aggregates infectivity.}
\newblock {\em {Math. Biosci.}}, 217(1):88--99, 2009.

\bibitem{C21}
B.~Cavalli.
\newblock A probabilistic view on the long-time behaviour of
  growth-fragmentation semigroups with bounded fragmentation rates.
\newblock {\em arXiv preprint arXiv:1912.0875}, 2019.

\bibitem{C20}
B.~Cavalli.
\newblock On a family of critical growth-fragmentation semigroups and refracted
  {L}\'{e}vy processes.
\newblock {\em Acta Appl. Math.}, 166:161--186, 2020.

\bibitem{CMP10}
D.~Chafa\"{\i}, F.~Malrieu, and K.~Paroux.
\newblock On the long time behavior of the {TCP} window size process.
\newblock {\em Stochastic Process. Appl.}, 120(8):1518--1534, 2010.

\bibitem{C17}
B.~{Cloez}.
\newblock {Limit theorems for some branching measure-valued processes.}
\newblock {\em {Adv. Appl. Probab.}}, 49(2):549--580, 2017.

\bibitem{CG20}
B.~Cloez and P.~Gabriel.
\newblock On an irreducibility type condition for the ergodicity of
  nonconservative semigroups.
\newblock {\em C. R. Math. Acad. Sci. Paris}, 358(6):733--742, 2020.

\bibitem{DDGW18}
T.~{D\c{e}biec}, M.~{Doumic}, P.~{Gwiazda}, and E.~{Wiedemann}.
\newblock {Relative entropy method for measure solutions of the
  growth-fragmentation equation.}
\newblock {\em {SIAM J. Math. Anal.}}, 50(6):5811--5824, 2018.

\bibitem{DHT84}
O.~{Diekmann}, H.~J. A.~M. {Heijmans}, and H.~R. {Thieme}.
\newblock {On the stability of the cell size distribution.}
\newblock {\em {J. Math. Biol.}}, 19:227--248, 1984.

\bibitem{D57}
J.~L. {Doob}.
\newblock {Conditional Brownian motion and the boundary limits of harmonic
  functions.}
\newblock {\em {Bull. Soc. Math. Fr.}}, 85:431--458, 1957.

\bibitem{DE16}
M.~{Doumic} and M.~{Escobedo}.
\newblock {Time asymptotics for a critical case in fragmentation and
  growth-fragmentation equations.}
\newblock {\em {Kinet. Relat. Models}}, 9(2):251--297, 2016.

\bibitem{DG09}
M.~Doumic~Jauffret and P.~Gabriel.
\newblock Eigenelements of a general aggregation-fragmentation model.
\newblock {\em Math. Models Methods Appl. Sci.}, 20(05):757--783, 2010.

\bibitem{DG17}
G.~Dumont and P.~Gabriel.
\newblock The mean-field equation of a leaky integrate-and-fire neural network:
  measure solutions and steady states.
\newblock {\em Nonlinearity}, 33(12):6381--6420, 2020.

\bibitem{EN01}
K.-J. Engel and R.~Nagel.
\newblock {\em One-{P}arameter {S}emigroups for {L}inear {E}volution
  {E}quations}, volume 194 of {\em Graduate Texts in Mathematics}.
\newblock Springer-Verlag, New York, 2000.

\bibitem{EPW}
H.~{Engler}, J.~{Pr\"uss}, and G.~F. {Webb}.
\newblock {Analysis of a model for the dynamics of prions. II.}
\newblock {\em {J. Math. Anal. Appl.}}, 324(1):98--117, 2006.

\bibitem{EMRR05}
M.~Escobedo, S.~Mischler, and M.~Rodriguez~Ricard.
\newblock On self-similarity and stationary problem for fragmentation and
  coagulation models.
\newblock {\em Ann. Inst. H. Poincar\'{e} Anal. Non Lin\'{e}aire},
  22(1):99--125, 2005.

\bibitem{G15}
P.~{Gabriel}.
\newblock {Global stability for the prion equation with general incidence.}
\newblock {\em {Math. Biosci. Eng.}}, 12(4):789--801, 2015.

\bibitem{G18}
P.~{Gabriel}.
\newblock {Measure solutions to the conservative renewal equation.}
\newblock {\em {ESAIM, Proc. Surv.}}, 62:68--78, 2018.

\bibitem{GM19}
P.~Gabriel and H.~Martin.
\newblock Periodic asymptotic dynamics of the measure solutions to an equal
  mitosis equation.
\newblock {\em arXiv preprint arXiv:1909.08276}, 2019.

\bibitem{GS14}
P.~{Gabriel} and F.~{Salvarani}.
\newblock {Exponential relaxation to self-similarity for the superquadratic
  fragmentation equation.}
\newblock {\em {Appl. Math. Lett.}}, 27:74--78, 2014.

\bibitem{GN88}
G.~Greiner and R.~Nagel.
\newblock Growth of cell populations via one-parameter semigroups of positive
  operators.
\newblock In {\em Mathematics Applied to Science}, pages 79--105. Elsevier,
  1988.

\bibitem{H16}
M.~Hairer.
\newblock Convergence of {M}arkov processes.
\newblock Lecture notes, 2016.

\bibitem{HM11}
M.~{Hairer} and J.~C. {Mattingly}.
\newblock {Yet another look at Harris' ergodic theorem for Markov chains.}
\newblock In {\em {Seminar on stochastic analysis, random fields and
  applications VI. Centro Stefano Franscini, Ascona (Ticino), Switzerland, May
  19--23, 2008.}}, pages 109--117. Basel: Birkh\"auser, 2011.

\bibitem{HW89}
A.~J. Hall and G.~C. Wake.
\newblock A functional-differential equation arising in modelling of cell
  growth.
\newblock {\em J. Austral. Math. Soc. Ser. B}, 30(4):424--435, 1989.

\bibitem{HW90}
A.~J. {Hall} and G.~C. {Wake}.
\newblock {Functional differential equations determining steady size
  distributions for populations of cells growing exponentially.}
\newblock {\em {J. Aust. Math. Soc., Ser. B}}, 31(4):434--453, 1990.

\bibitem{H56}
T.~E. Harris.
\newblock The existence of stationary measures for certain {M}arkov processes.
\newblock In {\em Proceedings of the {T}hird {B}erkeley {S}ymposium on
  {M}athematical {S}tatistics and {P}robability, 1954--1955, vol. {II}}, pages
  113--124. University of California Press, Berkeley and Los Angeles, 1956.

\bibitem{H85}
H.~J. A.~M. {Heijmans}.
\newblock {An eigenvalue problem related to cell growth.}
\newblock {\em {J. Math. Anal. Appl.}}, 111:253--280, 1985.

\bibitem{LP09}
P.~{Lauren\c{c}ot} and B.~{Perthame}.
\newblock {Exponential decay for the growth-fragmentation/cell-division
  equations.}
\newblock {\em {Commun. Math. Sci.}}, 7(2):503--510, 2009.

\bibitem{LW17}
E.~Leis and C.~Walker.
\newblock Existence of global classical and weak solutions to a prion equation
  with polymer joining.
\newblock {\em J. Evol. Equ.}, 17(4):1227--1258, 2017.

\bibitem{McGZ87}
E.~D. McGrady and R.~M. Ziff.
\newblock ``{S}hattering'' transition in fragmentation.
\newblock {\em Phys. Rev. Lett.}, 58(9):892--895, 1987.

\bibitem{DM14}
J.~A. Metz and O.~Diekmann.
\newblock {\em The {D}ynamics of {P}hysiologically {S}tructured {P}opulations},
  volume~68.
\newblock Springer, 2014.

\bibitem{MT93}
S.~P. Meyn and R.~L. Tweedie.
\newblock {\em Markov {C}hains and {S}tochastic {S}tability}.
\newblock Communications and Control Engineering Series. Springer-Verlag
  London, Ltd., London, 1993.

\bibitem{M06}
P.~Michel.
\newblock Existence of a solution to the cell division eigenproblem.
\newblock {\em Math. Models Methods Appl. Sci.}, 16(supp01):1125--1153, 2006.

\bibitem{MMP05}
P.~Michel, S.~Mischler, and B.~Perthame.
\newblock General relative entropy inequality: an illustration on growth
  models.
\newblock {\em J. Math. Pures Appl. (9)}, 84(9):1235--1260, 2005.

\bibitem{MS16}
S.~{Mischler} and J.~{Scher}.
\newblock {Spectral analysis of semigroups and growth-fragmentation equations.}
\newblock {\em {Ann. Inst. Henri Poincar\'e, Anal. Non Lin\'eaire}},
  33(3):849--898, 2016.

\bibitem{PPS14}
K.~Pakdaman, B.~Perthame, and D.~Salort.
\newblock Adaptation and fatigue model for neuron networks and large time
  asymptotics in a nonlinear fragmentation equation.
\newblock {\em J. Math. Neurosci.}, 4:Art. 14, 26, 2014.

\bibitem{P06}
B.~Perthame.
\newblock {\em Transport {E}quations in {B}iology}.
\newblock Frontiers in Mathematics. Springer Science \& Business Media, 1
  edition, 2006.

\bibitem{PR05}
B.~{Perthame} and L.~{Ryzhik}.
\newblock {Exponential decay for the fragmentation or cell-division equation.}
\newblock {\em {J. Differ. Equ.}}, 210(1):155--177, 2005.

\bibitem{RTK17}
R.~{Rudnicki} and M.~{Tyran-Kami\'nska}.
\newblock {\em {Piecewise {D}eterministic {P}rocesses in {B}iological
  {M}odels.}}
\newblock Cham: Springer, 2017.

\bibitem{SS71}
J.~W. Sinko and W.~Streifer.
\newblock A model for populations reproducing by fission.
\newblock {\em Ecology}, 52(2):330--335, 1971.

\bibitem{S13}
D.~W. Stroock.
\newblock {\em An Introduction to Markov Processes}, volume 230.
\newblock Springer Science \& Business Media, 2013.

\bibitem{vBALZ}
B.~{van Brunt}, A.~{Almalki}, T.~{Lynch}, and A.~A. {Zaidi}.
\newblock {On a cell division equation with a linear growth rate.}
\newblock {\em {ANZIAM J.}}, 59(3):293--312, 2018.

\bibitem{ZvBW152}
A.~A. Zaidi, B.~van Brunt, and G.~C. Wake.
\newblock Solutions to an advanced functional partial differential equation of
  the pantograph type.
\newblock {\em Proceedings of the Royal Society A: Mathematical, Physical and
  Engineering Sciences}, 471(2179):20140947, 2015.

\end{thebibliography}
	
\end{document}